\newtheorem{thm}{Theorem}[section]
\newtheorem{coro}[thm]{Corollary}
\newtheorem{conj}[thm]{Conjecture}
\newtheorem{lema}[thm]{Lemma}
\newtheorem{propo}[thm]{Proposition}
\newtheorem{remark}[thm]{Remark}
\numberwithin{equation}{section}
\newtheorem{bigthm}{Theorem}
\newtheorem{bigcor}[bigthm]{Corollary}
\newtheorem{lem}[thm]{Lemma}
\newcommand*{\DMO}[1]{\expandafter\DeclareMathOperator\csname #1\endcsname {#1}}
\DeclarePairedDelimiter\abs{\lvert}{\rvert}
\newcommand{\ind}[1]{\mathds{1}_{{#1}}}
\DeclareMathOperator{\arcosh}{arcosh}
\DeclareMathOperator{\pv}{P.V.}
\renewcommand{\a}{\alpha}
\renewcommand{\b}{\beta}
\newcommand{\RR}{\mathbb{R}}
\newcommand{\NN}{\mathbb{N}}
\newcommand{\ZZ}{\mathbb{Z}}
\author[A.\ Nowak]{Adam Nowak}
\address[Adam Nowak]{Institute of Mathematics,
Polish Academy of Sciences,
\'Sniadeckich 8,
00-656 Warszawa, Poland}
\email{anowak@impan.pl}
\author[L.\ Roncal]{Luz Roncal}
\address[Luz Roncal]{
BCAM - Basque Center for Applied Mathematics,
48009 Bilbao, Spain
and
Ikerbasque, Basque Foundation for Science,
48011 Bilbao, Spain
and
Universidad del Pa\'is Vasco / Euskal Herriko Unibertsitatea,
48080 Bilbao, Spain}
\email{lroncal@bcamath.org}
\author[T.Z.\ Szarek]{Tomasz Z.\ Szarek}
\address[Tomasz Z. Szarek]{
BCAM - Basque Center for Applied Mathematics,
48009 Bilbao, Spain 
and
Instytut Matematyczny,
Uniwersytet Wroc{\l}awski,
Plac Grunwaldzki 2,
50-384 Wroc{\l}aw,
Poland}
\email{tzszarek@bcamath.org}
\keywords{Spherical Radon transform, spherical mean, maximal operator, radial function,
	weighted estimate, sharp estimate, endpoint result, $L^p$ estimate, weak type estimate, restricted weak type estimate}
\subjclass[2020]{Primary: 44A12; Secondary: 42B37, 35L15, 35B07, 35L05, 35Q05.}
\begin{document}

\title[Endpoint estimates and optimality]{Endpoint estimates and optimality 
 \\  for the generalized spherical maximal operator\\  on radial functions}

\begin{abstract}
We find sharp conditions for the maximal operator associated with generalized spherical mean Radon transform on
radial functions $M^{\a,\b}_t$ to be bounded on power weighted Lebesgue spaces.
Moreover, we also obtain the corresponding endpoint results in terms of optimal
power weighted weak and restricted weak type estimates.
\end{abstract}

\maketitle

\section{Introduction and main results}

Let $\a > -1$ and $\a+\b > -1/2$. Denote $d\mu_{\a}(x) = x^{2\a+1}dx$.
We interpret the generalized spherical mean operator on radial functions as the one-dimensional two-parameter integral operator
$$
M_{t}^{\a,\b}f(x) = \int_0^{\infty} K_t^{\a,\b}(x,z) f(z)\, d\mu_{\a}(z), \qquad x > 0,
$$
where $t > 0$ and the kernel is given by 
$$
K_t^{\a,\b}(x,z) = \frac{2^{\a+\b}\Gamma(\a+\b+1)}{t^{\a+\b}(xz)^{\a}} \int_0^{\infty} J_{\a+\b}(ty)
	J_{\a}(xy) J_{\a}(zy) y^{1-\a-\b}\, dy,
$$
with $J_{\nu}$ standing for the Bessel function of the first kind and order $\nu$.
This kernel is well defined for $t,x,z>0$ such that, in general, $t \neq |x-z|$ and $t \neq x+z$.
Note that the triple Bessel function integral here converges absolutely when $\a+\b > 1/2$,
but for $-1/2 < \a + \b \le 1/2$ the convergence at infinity is only conditional, in the Riemann sense.
In the limiting case $\a+\b=-1/2$, not considered in this paper, the integral diverges, nevertheless $K_t^{\a,\b}(x,z)$ and
$M_t^{\a,\b}$ can still be defined in a suitable way; see Appendix.

The principal focus of this paper is on the maximal operator
$$
M^{\a,\b}_{*}f = \sup_{t > 0} \big| M_t^{\a,\b}f \big|
$$
with continuous ranges of the parameters $\alpha > -1$, $\beta > -1/2 -\alpha$.
Our aim is to prove complete sharp descriptions of strong and weak/restricted weak type
boundedness of $M^{\a,\b}_*$ with respect to power weights on $\mathbb{R}_+ := (0,\infty)$.
For background, motivations and importance of $M_t^{\a,\b}$ and $M_{*}^{\a,\b}$, in particular crucial connections to
ordinary and generalized spherical means, and to classical PDE problems,
we refer to \cite{CNR,CNR2} and references given there.

The main results of this paper are the following (here and elsewhere, by weakening a strict inequality we mean
replacing ``$<$'' by ``$\le$''; analogously we understand strictening a weak inequality).
\begin{bigthm} \label{thm:strong}
Assume that $\a > -1$ and $\a+\b>-1/2$. Let $1 \le p < \infty$ and $\delta \in \mathbb{R}$.
Then the maximal operator $M_{*}^{\a,\b}$ is bounded on $L^p(\mathbb{R}_+,x^{\delta}dx)$ if and only if $p>1$ and
\begin{equation} \label{cnn}
\frac{1}p < \a + \b + \frac{1}2
\end{equation}
and
\begin{equation*}
\left\{
\begin{array}{c}
-(\b p \wedge 1) < \delta < \big(2\a + (\b \wedge 1) + 1\big)p -1 \\
\textrm{\footnotesize{with the first inequality weakened when $\b p < 1$}}
\end{array}
\right\}.
\end{equation*}
\end{bigthm}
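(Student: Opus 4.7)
The plan is to establish the stated conditions as both necessary and sufficient, relying on the pointwise kernel bounds for $K_t^{\alpha,\beta}(x,z)$ developed in \cite{CNR,CNR2}. Those bounds describe the kernel in three qualitatively different regimes: the off-diagonal regions $z \ll x$ and $z \gg x$, the diagonal region $z \sim x$, and a neighborhood of the singular support formed by the rays $t = |x-z|$ and $t = x+z$, where integrable singularities of controlled strength depending on $\alpha$ and $\beta$ are present. With such bounds in hand, the weighted estimate for $M_*^{\alpha,\beta}$ reduces to weighted $L^p$ bounds for a small collection of model operators of Hardy, Hilbert, and fractional type on $\mathbb{R}_+$.

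For necessity, I would test $M_*^{\alpha,\beta}$ on one-parameter families of explicit functions. Taking $f = \mathbf{1}_{[1,2]}$ and choosing, for each large $x$, a radius $t \approx x$ so that one of the singular rays meets the support of $f$, one obtains a pointwise lower bound on $M_*^{\alpha,\beta}f(x)$ of order $x^{-2\alpha-(\beta \wedge 1)-1}$ at infinity; imposing $L^p(x^{\delta}dx)$ integrability forces the upper bound $\delta < (2\alpha+(\beta \wedge 1)+1)p - 1$, and also forces $1/p < \alpha+\beta+1/2$ through the rate at which the oscillatory part of the kernel decays. The lower threshold $-(\beta p \wedge 1)$ with its strict/weak dichotomy is extracted by testing on $f(z) = z^{a}\mathbf{1}_{(0,1]}(z)$ with $a$ driven to the critical exponent, exploiting that the local contribution of the kernel near $z=0$ is of order $z^{-2\alpha-2}$ or $z^{-(2\alpha+\beta+1)}$ according to whether $\beta$ is above or below $1$. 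The case $p=1$ is ruled out by the same type of argument that prevents the Hardy--Littlewood maximal operator from being of strong type $(1,1)$.

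For sufficiency, I would linearize $M_*^{\alpha,\beta}$ by a measurable selection $t = t(x)$ and split the $z$-integral according to the three regimes above. The off-diagonal and diagonal parts are dominated by a weighted Hardy operator together with its dual, whose $L^p(x^{\delta}dx)$ boundedness on $\mathbb{R}_+$ is governed precisely by the stated two-sided range on $\delta$. The singular contributions near $t = |x-z|$ and $t = x+z$ are controlled, after integrating out the Bessel oscillations, by a fractional averaging operator whose $L^p$ mapping behavior is governed precisely by \eqref{cnn}. The main obstacle will be the endpoint case $\beta p = 1$, where the lower inequality on $\delta$ changes from strict to weak: at this transition between two kernel regimes a direct logarithmic analysis has to replace interpolation, since the boundary point is not accessible from neighboring open-range estimates via Marcinkiewicz-type arguments alone.
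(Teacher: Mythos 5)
Your plan captures the general philosophy — kernel estimates from \cite{CNR,CNR2}, decomposition of $M^{\a,\b}_*$, domination by model operators, test functions for necessity — but it misses the specific technical core of the argument, and in particular misidentifies where the genuine difficulty lies.

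On sufficiency, the identification of the model operators as ``Hardy, Hilbert, and fractional type'' is too coarse. The paper requires a substantially larger family of specialized maximal operators — $L$, $N_\eta$, $R_\eta$, $E_{k,\eta}$, $T_\eta$, $S_{\a,\b}$, $S^{\log}_{\a,\b}$, $R^{\log}_{\a,\b}$ — each of which needs its own sharp weighted $L^p$ boundedness analysis (Section \ref{sec:auxTS}), and the three-way ``off-diagonal / diagonal / near-singular-ray'' split is too crude compared to the six-region decomposition ($E_1, E_2, E_3, F_1, F_2', F_2''$) inherited from \cite{CNR2}. Moreover, the bulk of the sufficiency can be cited directly from \cite[Theorem 1.5]{CNR2}; the genuinely new content concerns the regions $\mathfrak{R}_1$, $\mathfrak{R}_2$ and specifically the endpoint $\delta = -\b p$. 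You locate ``the main obstacle'' at $\b p = 1$ and propose to overcome it with ``a direct logarithmic analysis'' in place of interpolation. This is not where the difficulty is. The hard cases are: (i) the attained endpoint $\delta = -\b p$ when $\b p < 1$, where the $F_2''$ part of the operator cannot be dominated by the previously available operator $T_{1-\b}$ (or by a H\"older trick as in \eqref{Ttheta}) and requires the entirely new operator $S_{\a,\b}$ together with its sharp $L^p(x^{-\b p}dx)$ boundedness (Lemmas \ref{lem:5}, \ref{lem:5a}); and (ii) the logarithmic kernel case $\a+\b=1/2$, $\b\notin\mathbb{Z}$, where the kernel bound has a genuine logarithm and one needs $S^{\log}_{\a,\b}$, $R^{\log}_{\a,\b}$ (Lemmas \ref{lem:5b}, \ref{lem:Rlog}). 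The transition $\b p = 1$ in the statement is merely where the lower threshold on $\delta$ changes form ($-\b p$ versus $-1$); it is not the place where a new method is needed.

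On necessity, the plan is in the right spirit but glosses over the structure: necessity for $p>1$ is already contained in \cite[Proposition 1.6]{CNR2}, so the only new content is $p=1$, which in the paper is handled via a concrete borderline function $f(z)=\ind{(1,1+\xi)}(z)(z-1)^{-1}(\log\tfrac{2}{z-1})^{-2}$ rather than by an abstract analogy with the Hardy--Littlewood maximal function. Your attribution of the condition \eqref{cnn} to ``the rate at which the oscillatory part of the kernel decays'' is misleading: it comes from the local singularity of order $(t-|x-z|)^{\a+\b-1/2}$ along the rays, extracted by a test bump $f_N=\ind{(N-1,N+1)}$ at scale $N\to\infty$. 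Your proposed test for the lower threshold, $f(z)=z^a\ind{(0,1]}(z)$, also differs from the paper's (which uses bumps at spatial infinity for $\delta<-\b p$ and at fixed scale for $\delta\le -1$); it is not obviously wrong, but it is not verified and is not the path the paper takes. In short, the proposal is a plausible outline for the easy range but does not contain the idea — the new family of maximal operators $S_{\a,\b}$, $S^{\log}_{\a,\b}$, $R^{\log}_{\a,\b}$ and their sharp boundedness — that actually closes the endpoint gap and constitutes the theorem's novelty.
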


\begin{bigthm} \label{thm:weak}
Assume that $\a > -1$ and $\a+\b>-1/2$. Let $1 \le p < \infty$ and $\delta \in \mathbb{R}$.
\begin{itemize}
\item[(a)]
The maximal operator $M^{\a,\b}_*$ is of weak type $(p,p)$ with respect to the measure space $(\mathbb{R}_+,x^{\delta}dx)$
if and only if
\begin{equation} \label{cn5n}
\left\{
\begin{array}{c}
\frac{1}p < \a + \b + \frac{1}2 \\
\textrm{\footnotesize{with the inequality weakened when $\a + \b = 1/2$ and $\b \in \mathbb{Z}$}}
\end{array}
\right\}
\end{equation}
and
\begin{equation*}
\left\{
\begin{array}{c}
-(\b p \wedge 1) < \delta < \big(2\a + (\b \wedge 1) + 1\big)p -1 \\
\textrm{\footnotesize{with the first inequality weakened when $\b p < 1$}} \\
\textrm{\footnotesize{and the second inequality weakened when $p=1$}}
\end{array}
\right\}.
\end{equation*}
\item[(b)]
The maximal operator $M^{\a,\b}_*$ is of restricted weak type $(p,p)$ with respect to the measure space $(\mathbb{R}_+,x^{\delta}dx)$
if and only if
\begin{equation} \label{cnnn}
\left\{
\begin{array}{c}
\frac{1}p \le \a + \b + \frac{1}2 \\
\textrm{\footnotesize{with the inequality strictened when $\a + \b = 1/2$ and $\b \notin \mathbb{Z}$}}
\end{array}
\right\}
\end{equation}
and
\begin{equation*}
\left\{
\begin{array}{c}
-(\b p \wedge 1) < \delta \le \big(2\a + (\b \wedge 1) + 1\big)p -1 \\
\textrm{\footnotesize{with the first inequality weakened when $\b p < 1$}} 
\end{array}
\right\}.
\end{equation*}
\end{itemize}
\end{bigthm}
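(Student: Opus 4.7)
The strategy is to combine Theorem~A, which settles the interior of the admissible region, with direct endpoint analysis of $M_*^{\a,\b}$ on the boundary. Since strong type implies weak type which in turn implies restricted weak type, nothing remains to check inside the strong type region of Theorem~A; the content of Theorem~B lies entirely on its boundary, together with sharpness.

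For sufficiency in (b) it is enough to test on indicators $f = \ind{E}$, the restricted weak type inequality being equivalent to the corresponding bound $\mu_\a(\{M_*^{\a,\b}\ind{E} > \lambda\}) \lesssim \lambda^{-p} \mu_\a(E)$. A layer-cake analysis of this level set, combined with the sharp pointwise estimates for $K_t^{\a,\b}(x,z)$ established in \cite{CNR,CNR2}, reduces the problem to a one-dimensional Bessel-type integral. The boundary $\delta = (2\a + (\b\wedge 1) + 1)p - 1$ corresponds to the borderline convergence of this integral, while the critical line $1/p = \a+\b+1/2$ reflects a resonant cancellation in the $y\to\infty$ asymptotics of $J_{\a+\b}$ that is automatic when $\b \in \ZZ$, by the closed-form trigonometric expressions for Bessel functions of integer order, and generically absent otherwise. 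For (a), the weakened upper $\delta$-bound at $p=1$ and the inclusion of the critical line when $\b\in\ZZ$ are handled by the analogous direct scheme, the passage from restricted weak to genuine weak type achieved by a linearization of the supremum defining $M_*^{\a,\b}$ together with a standard dyadic/atomic reduction from general $f$ to characteristic functions.

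For necessity I would exhibit explicit counterexamples at each forbidden boundary: $f = \ind{(0,\varepsilon)}$ with $\varepsilon \to 0^+$ to violate the lower bound on $\delta$; $f = \ind{(R,2R)}$ with $R \to \infty$ to violate the upper bound; and dilated bumps tuned to the large-$y$ phase of $J_{\a+\b}$ to violate the constraint on $\a+\b$. The dichotomy between $\b \in \ZZ$ and $\b \notin \ZZ$ on the critical line $\a+\b=1/2$ would be exposed by test functions matched to the oscillation of $J_{\a+\b}(ty)$ in the $t$-variable, producing divergent contributions when $\b$ is non-integer and complete cancellation when $\b$ is integer.

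The main obstacle is the two-sided sharp analysis along the critical line $\a+\b=1/2$, where the triple Bessel integral defining $K_t^{\a,\b}$ is only conditionally convergent and the arithmetic dichotomy on $\b$ must be captured with exactly the asymmetry displayed in \eqref{cn5n} and \eqref{cnnn}: the positive estimates for $\b\in\ZZ$ and the counterexamples for $\b\notin\ZZ$ have to be balanced carefully so that neither overshoots into an adjacent regime. A secondary difficulty is the promotion of restricted weak to weak type at the relaxed $\delta$-boundary when $p=1$ in (a), which cannot proceed by interpolation—no strong type estimate is available there—and so must be obtained by a direct argument tailored to the maximal operator structure of $M_*^{\a,\b}$.
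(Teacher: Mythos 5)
Your framework---reducing to boundary cases, testing indicators for restricted weak type, and constructing counterexamples for necessity---points in the right general direction, but there are several concrete gaps that prevent it from carrying through.

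First, and most seriously, testing on indicator functions cannot disprove \emph{weak} type $(p,p)$ at the $\delta$-boundary $\delta = (2\a+(\b\wedge 1)+1)p-1$, precisely because restricted weak type \emph{holds} there when $p>1$ (Theorem B(b)). The whole content of Corollary \ref{cor:incr}(b) is that these boundaries separate weak from restricted weak type, so any proposed counterexample to weak type must be a genuine $L^p$ function with nontrivial level-set structure. The paper's Lemma \ref{lem:C} (items (b1)--(b3)) uses test functions with logarithmic singularities, e.g.\ $f(z)=\ind{(0,1)}(z)z^{-2\a-\b-1}/\log\tfrac{2}{z}$, whose $L^p$ membership is exactly borderline and whose image under the maximal operator is infinite. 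Your proposed $\ind{(0,\varepsilon)}$ and $\ind{(R,2R)}$ can only reach restricted weak type violations. Relatedly, the ``standard dyadic/atomic reduction from general $f$ to characteristic functions'' you invoke to promote restricted weak to weak type does not exist for such maximal operators; if it did, Theorem B(a) and B(b) would have identical admissible regions, which they do not.

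Second, the mechanism for the arithmetic dichotomy on the line $\a+\b=1/2$ is misattributed. It has nothing to do with trigonometric closed-form expressions for Bessel functions of integer order (those exist for half-integer, not integer, order in any case). The actual mechanism is that the kernel $K_t^{\a,\b}(x,z)$ acquires a logarithmic blow-up along the boundary of its support precisely when $\a+\b=1/2$ and $\b\notin\ZZ$ (see \cite[Theorem 2.1]{CNR2}), whereas for $\b\in\ZZ$ on this line the kernel is bounded or even vanishes on $F$. The positive result in the logarithmic case is a direct H\"older estimate after a Colzani-type splitting of the domain into sets $\mathcal{D}_1,\dots,\mathcal{D}_4$ governed by $\varphi(z)=4xz/(t^2-(x-z)^2)$; the negative result at $p=1$ again needs a non-indicator function with a log singularity (Lemma \ref{lem:C}(c4)).

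Finally, ``layer-cake analysis of the level set reduces to a one-dimensional Bessel-type integral'' is not an actionable route to the restricted weak endpoints in the singular range $-1/2<\a+\b<1/2$. The paper handles this case by establishing pointwise control of $\Psi_{*,E}^{\a,\b}\ind{\mathcal{A}}$ and $\Psi_{*,F}^{\a,\b}\ind{\mathcal{A}}$ in terms of powers of the auxiliary operators $E_{2,\cdot}$, $R_{\cdot}$, $T_{\cdot}$, $H_{\cdot}$ (Lemma \ref{lem:DRgen}), which in turn rests on a genuinely combinatorial inequality $(\sum_k 2^{k\eta} b_k)^\lambda\lesssim\sum_k 2^{k\eta\lambda}b_k$ for $b_k\in[0,1]$ (Lemma \ref{lem:71}). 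Absent that mechanism, there is no obvious way to make ``$\delta$ on the boundary means borderline convergence of the Bessel integral'' yield the restricted weak estimate for arbitrary sets.
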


An instant consequence of Theorems \ref{thm:strong} and \ref{thm:weak} is the following corollary extracting
endpoint results related to Theorem \ref{thm:strong} that are contained in Theorem \ref{thm:weak}.
\begin{bigcor} \label{cor:incr}
Assume that $\a > -1$ and $\a+\b > -1/2$. Let $1 \le p < \infty$ and $\delta \in \mathbb{R}$.
\begin{itemize}
\item[(a)]
The maximal operator $M^{\a,\b}_*$ is of weak type $(p,p)$ with respect to the measure space $(\mathbb{R}_+,x^{\delta}dx)$,
but is not bounded on $L^p(\mathbb{R}_+,x^{\delta}dx)$, if and only if $p=1$ and
\begin{equation*}
\left\{
\begin{array}{c}
\a + \b > \frac{1}2 \\
\textrm{\footnotesize{with the inequality weakened when $\b \in \mathbb{Z}$}}
\end{array}
\right\}
\end{equation*}
and
\begin{equation} \label{c0corc}
\left\{
\begin{array}{c}
-(\b \wedge 1) < \delta \le 2\a + (\b \wedge 1) \\
\textrm{\footnotesize{with the first inequality weakened when $\b < 1$}}
\end{array}
\right\}.
\end{equation}

\item[(b)]
The maximal operator $M^{\a,\b}_*$ is of restricted weak type $(p,p)$, but not of weak type $(p,p)$,
with respect to the measure space $(\mathbb{R}_+,x^{\delta}dx)$, if and only if $p>1$ and either
\begin{equation} \label{c1corc}
\frac{1}p < \a + \b + \frac{1}2 \quad \textrm{and} \quad
\left\{
\begin{array}{c}
\delta = \big( 2\a + (\b \wedge 1) + 1 \big)p -1 > -(\b p \wedge 1) \\
\textrm{\footnotesize{with the inequality weakened when $\b p < 1$}}
\end{array}
\right\}
\end{equation}
or
\begin{equation} \label{c2corc}
\frac{1}p = \a + \b + \frac{1}{2} \quad \textrm{and} \quad
\left\{
\begin{array}{c}
-(\b p \wedge 1) < \delta \le \big(2\a + (\b \wedge 1) + 1\big)p -1 \\
\textrm{\footnotesize{with the first inequality weakened when $\b p < 1$}}
\end{array}
\right\}.
\end{equation}
\end{itemize}
\end{bigcor}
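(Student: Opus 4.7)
The proof is a pure bookkeeping exercise, extracting the content by comparing side-by-side the necessary and sufficient conditions from Theorem \ref{thm:strong} (strong type) and Theorem \ref{thm:weak} parts (a) (weak type) and (b) (restricted weak type); no new analytical input is required.

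For part (a), the plan is as follows. First, Theorem \ref{thm:strong} rules out $p=1$ outright, so at $p=1$ strong type never holds and ``weak but not strong'' coincides with ``weak type''; specializing Theorem \ref{thm:weak}(a) to $p=1$ yields exactly the constraint on $\a+\b$ in the statement together with \eqref{c0corc}, since the $p=1$ weakening of the $\delta$ upper bound then becomes active. Second, I must verify that for $p>1$ the weak and strong type conditions coincide. The $\delta$ ranges agree because the $p=1$ weakening in Theorem \ref{thm:weak}(a) does not apply. The $1/p$ conditions \eqref{cnn} and \eqref{cn5n} also agree: if $\a+\b > 1/2$ both are strict; if $\a+\b = 1/2$ and $\b \in \ZZ$ the weakening allows $1/p \le 1$, which combined with $p > 1$ reduces to $p > 1$, already implied by $1/p < 1 = \a+\b+1/2$ in \eqref{cnn}. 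Hence for $p > 1$ weak implies strong, confirming the dichotomy is captured only at $p=1$.

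For part (b), the comparison is between Theorem \ref{thm:weak}(a) and (b). At $p=1$ the two sets of conditions coincide: the $\delta$ upper bound is weak in both, and, as a direct check of the ``weakening/strictening'' clauses shows, \eqref{cn5n} and \eqref{cnnn} both reduce to the same condition, namely $\a+\b > 1/2$ or the pair $\a+\b = 1/2$ with $\b \in \ZZ$. Therefore ``restricted weak but not weak'' forces $p > 1$. For $p > 1$ there are precisely two independent sources of discrepancy: either the $\delta$-endpoint equality $\delta = (2\a+(\b\wedge 1)+1)p - 1$ is attained while $1/p < \a+\b + 1/2$ (this gives \eqref{c1corc}, where the inequality $\delta > -(\b p \wedge 1)$ with its weakening is the non-emptiness of the lower-bound constraint); or one sits on the critical line $1/p = \a+\b + 1/2$ (which forces $\a+\b > 1/2$ since $p > 1$), in which case \eqref{cn5n} fails while \eqref{cnnn} holds and any $\delta$ in the restricted weak range produces an instance, namely \eqref{c2corc}.

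The main challenge is purely organizational: several ``weakening/strictening'' caveats must be tracked through the multiple edge cases $p=1$, $\b p = 1$, $\b \in \ZZ$, and $\a+\b = 1/2$, and I need to confirm that none of these interactions produces an unexpected gap or overlap. Once the case analysis is laid out, every assertion in Corollary \ref{cor:incr} follows by direct comparison with Theorems \ref{thm:strong} and \ref{thm:weak}, with no further work.
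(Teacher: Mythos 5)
Your approach is exactly what the paper intends by calling the corollary ``an instant consequence'' of Theorems~\ref{thm:strong} and~\ref{thm:weak}: a side-by-side comparison of the necessity/sufficiency conditions, tracking the $p=1$, $\b p=1$, $\b\in\ZZ$, and $\a+\b=1/2$ caveats through the four possible dichotomies. The bookkeeping is carried out correctly; I confirmed that for $p>1$ the weak and strong conditions coincide, that at $p=1$ weak and restricted weak coincide, and that for $p>1$ the two discrepancy sources are exactly \eqref{c1corc} (the $\delta$-endpoint) and \eqref{c2corc} (the critical line). One small slip: in the discussion of \eqref{c2corc} you write that $1/p=\a+\b+1/2$ and $p>1$ ``forces $\a+\b>\tfrac12$''; it in fact forces $\a+\b<\tfrac12$ (since $1/p<1$). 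Fortunately this is the direction that actually matters, because it means the strictening clause in \eqref{cnnn} (which only bites when $\a+\b=1/2$) is vacuous, so \eqref{cnnn} holds while \eqref{cn5n} fails — the conclusion you draw is correct, only the parenthetical reason is reversed.
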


Both $M_t^{\a,\b}$ and $M^{\a,\b}_{*}$ have been extensively investigated recently by
Ciaurri and two of the authors \cite{CNR,CNR2}.
Boundedness of $M^{\a,\b}_{*}$ on power weighted $L^p$ spaces for $1 < p < \infty$ was studied in \cite{CNR2}.
Sufficient conditions for the boundedness were obtained in \cite[Theorem 1.5]{CNR2}, while necessary conditions can be found in
\cite[Proposition 1.6]{CNR2}. Theorem \ref{thm:strong} provides a sharp refinement of those results, as well as an
extension to $p=1$. In particular, it shows that \cite[Theorem 1.5]{CNR2} is not yet optimal for some $\a$ and $\b$
(contrary to what the authors of \cite{CNR2} presumed), whereas \cite[Proposition 1.6]{CNR2}
turns out to be already optimal. The discrepancy between the two results from \cite{CNR2},
and thus the contribution of the present paper to power weighted $L^p$-boundedness of $M^{\a,\b}_*$,
is described in detail in \cite[p.\,1600]{CNR2}, see also \cite[Remark~4.3]{CNR2}.
For the readers' convenience, we note that comparing to the results in \cite{CNR2}, the novelty of Theorem~\ref{thm:strong}
is the $L^p(x^{\delta}dx)$-boundedness of $M^{\a,\b}_*$ when, assuming that $\a > -1$, $\a+\b > -1/2$ and $1<p<\infty$,
either
$\b \le 0$, $\a+\b \le 1/2$, $-\b \notin \mathbb{N}$ and $\delta = -\b p$,
or
$0 < \b < 1$, $\a > -1/2$ and $-(\b p \wedge 1) < \delta < -\b\slash [(\a+\b+1/2)\wedge 1]$
(with the lower inequality for $\delta$ weakened if $\b p < 1$).
Furthermore, Theorem \ref{thm:strong} contains the lack of $L^1(x^{\delta}dx)$-boundedness of $M^{\a,\b}_*$
for any $\delta \in \mathbb{R}$.
We remark that for some very special choices of $\a$, $\b$ and $\delta$ the $L^p(x^{\delta}dx)$-boundedness of $M^{\a,\b}_*$,
and also its sharpness in certain cases, was obtained before \cite{CNR2} in e.g.\ \cite{DMO,DMO2}, see \cite[Section 1]{CNR2}
and results and references invoked there.

On the other hand, Theorem \ref{thm:weak} delivers a complete characterization of power weighted weak and restricted
weak type estimates. This can be seen as an endpoint result related to Theorem A, see Corollary \ref{cor:incr}.
The results of Theorem \ref{thm:weak} are new up to several very special subcases which we now summarize in detail.
To this end, all the mapping properties are understood with respect to the measure space $(\mathbb{R}_+,x^{\delta}dx)$.
In case $\a+\b=1/2$, $\a \ge -1/2$ and $\delta = 2\a+1$ Colzani et al.\ \cite{CoCoSt} proved that $M^{\a,\b}_*$ is of restricted
weak type $(p,p)$ for $p =(4\a+4)/(2\a+3)$ and $\a > -1/2$ (this corresponds to Condition \eqref{c1corc} in Corollary \ref{cor:incr}),
while for $\a=-1/2$ they obtained weak type $(1,1)$ of $M^{\a,\b}_*$.
See \cite[Corollary 3.5]{CoCoSt} and \cite[Theorem 3.1]{CoCoSt}, respectively.
In case $\a+\b = 1/2$ and $-\b \in \mathbb{N}$, Duoandikoetxea et al.\ \cite{DMO2} proved that $M^{\a,\b}_*$ is of weak type $(1,1)$
provided that$^{\dag}$ $-\b \le \delta \le 2\a+\b$, see \cite[Theorem 3.2]{DMO2}.
\footnotetext{$\dag$ There is a misprint in the condition for weak type $(1,1)$ in \cite[Theorem 3.2]{DMO2}, the upper inequality
should be weak.}
We note that \cite[Theorem 3.2]{DMO2} contains also a restricted weak type result for $M^{\a,\b}_*$, which is covered by the
above mentioned earlier and more general result of Colzani et al.\ \cite[Corollary 3.5]{CoCoSt}.
In another paper \cite{DMO} Duoandikoetxea et al.\ considered the case when $\b = 0$ and $\a \ge 0$ is half-integer.
For $\b=0$ and half-integer $\a \ge 1/2$ they proved that $M^{\a,\b}_*$ is of weak type $(1,1)$ if and only if
$0 \le \delta \le 2\a$ (this corresponds to Condition \eqref{c0corc} in Corollary \ref{cor:incr}).
Also for $\b=0$ and half-integer $\a \ge 1/2$, they obtained restricted weak type $(p,p)$ of $M^{\a,\b}_*$
for $p>1$ and $\delta = (2\a+1)p-1$ (which corresponds to Condition \eqref{c1corc} in Corollary \ref{cor:incr}).
In the case $\a=\b=0$, the authors of \cite{DMO} proved that $M^{\a,\b}_*$ is of restricted weak type $(2,2)$ if and only
if $0 \le \delta \le 1$ (this corresponds to Condition \eqref{c2corc} in Corollary \ref{cor:incr}) and
of restricted weak type $(p,p)$ if $p>2$ and $\delta = p-1$ (which corresponds to Condition \eqref{c1corc} in Corollary \ref{cor:incr}).
For all the results from \cite{DMO} just mentioned, see \cite[Theorem 1.1]{DMO}.
We note also an earlier result of Leckband \cite{Le}, who proved that $M^{\a,\b}_*$ is of restricted weak type $(2,2)$
when $\a=\b=0$ and $\delta = 1$. There is also a more earlier results of Bourgain \cite{Bourgain0}, which implies
restricted weak type $(p,p)$, with $p=(2\a+2)/(2\a+1)$,
of $M^{\a,\b}_*$ when $\b=0$ and $\a \ge 1/2$ is half-integer, see the related comments in
\cite[Section 1]{DMO}. The same Bourgain's work \cite{Bourgain0} also implies the above mentioned restricted weak type $(p,p)$
result from \cite{CoCoSt} in case $\alpha$ is half-integer, see the related comment in \cite[p.\,28]{CoCoSt}.

The proofs of the sufficiency parts in Theorems \ref{thm:strong} and \ref{thm:weak} are based on precise absolute estimates
of the kernel $K_t^{\a,\b}(x,z)$ obtained in \cite[Theorem 3.3]{CNR} (see also \cite[Theorem 2.1]{CNR2})
and suitable decomposition of the maximal
operator $M^{\a,\b}_{*}$ into parts, which can be controlled by means of auxiliary `special' operators that are more
convenient to analyze. The decomposition and the special operators are much the same as in \cite{CNR2}, and have roots
in the earlier papers by Duoandikoetxea, Moyua and Oruetxebarria \cite{DMO,DMO2}.
Here, however, we had to deal with some substantial technical difficulties, one
of them being the problem of finding sharp refinements/extensions of the results on the special operators known so far.
This strategy works for most choices of $\a$ and $\b$, nevertheless in some cases we found it convenient to use
somewhat different method inspired by the techniques from Colzani, Cominardi and Stempak \cite{CoCoSt},
where slightly different splittings of the regions involved for the analysis of the corresponding kernel are used. 
The proofs of the necessity parts in Theorems \ref{thm:strong} and \ref{thm:weak} rely on constructing suitable
counterexamples. This is possible thanks to a comprehensive analysis of the kernel $K_t^{\a,\b}(x,z)$ done in \cite[Section 3]{CNR}.
We emphasize that the present paper does not contribute in fundamentally novel techniques comparing
to the previous works \cite{CNR2,CoCoSt,DMO,DMO2}. The significant difficulty it overcomes lies in delicate
refinements/adjustments of the existing methods and an extended meticulous analysis that
turned out to be quite challenging. Note also that, in general,
the proofs of Theorems \ref{thm:strong} and \ref{thm:weak} refer heavily to the notation and reasonings
from \cite{CNR2}, so in this sense the present paper is not self-contained.
Otherwise we would need to essentially repeat some definitions and estimates from \cite{CNR2},
which would increase the volume of this paper without adding anything new.

An interesting, but more difficult and technical, problem would be to study boundedness of $M^{\a,\b}_*$ in the context of
general Lorentz spaces, say from $L^{p,q_1}(\mathbb{R}_+,x^{\delta}dx)$ to $L^{p,q_2}(\mathbb{R}_+,x^{\delta}dx)$ with
$1 \le q_1 \le q_2 \le \infty$. In particular, this could lead to finer endpoint results related to Theorem \ref{thm:strong}
than those contained in Corollary \ref{cor:incr}. However, all this requires a deeper sophisticated analysis, which is beyond the scope
of this paper. Note that even for specific choices of $\a$ and $\b$ such results do not seem to be available in the literature.

We now point out some interesting consequences of Theorems \ref{thm:strong} and \ref{thm:weak}. 
Let $n \ge 2$ and recall the generalized spherical means transformation in $\mathbb{R}^n$
$$
M^{\b}f(x,t) = \mathcal{F}^{-1} \big( m_{\b}(t|\cdot|) \mathcal{F}f \big)(x),
$$
where $\mathcal{F}$ is the Fourier transform in $\mathbb{R}^n$ and the radial multiplier is given via
\begin{equation}
\label{eq:Fmultiplier}
m_{\b}(s) = 2^{\b+n/2-1} \Gamma(\b+n/2) \frac{J_{\b+n/2-1}(s)}{s^{\b+n/2-1}}, \qquad s > 0.
\end{equation}
The parameter $\b$ can, in general, be a complex number excluding $\b = -n/2, -n/2-1, -n/2-2,\ldots$.
For $\b=0$ one recovers the classical spherical means, i.e.\
$M^0f(x,t)$ returns the mean value of $f$ on the sphere centered at $x$ and of radius $t$.

For the maximal operator $M^{\b}_*f = \sup_{t >0} |M^{\b}f(\cdot,t)|$ Stein \cite{Stein} proved the following.
\begin{thm}[{\cite{Stein}}] \label{thm:Stein}
Let $n \ge 3$. Then $M^{\b}_*$ is bounded on $L^p(\mathbb{R}^n)$ provided that
\begin{equation} \label{cndthm11}
1 < p \le 2 \;\; \textrm{and} \;\; \b > 1-n+\frac{n}p, \quad \textrm{or} \quad p > 2 \;\; \textrm{and} \;\; \b > \frac{2-n}p.
\end{equation}
\end{thm}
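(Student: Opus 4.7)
The plan is to adapt Stein's classical approach based on analytic interpolation of the analytic family $\{M^{\b}\}$ indexed by a complex parameter $\b$. Since the supremum in $t$ is nonlinear, I would first linearize: for any measurable $t : \RR^n \to (0,\infty)$, set $T^{\b}_{t} f(x) = M^{\b}f(x, t(x))$ and seek $L^p$ bounds uniform in the choice of $t(\cdot)$. Extending $\b$ to $\CC$, the family $\b \mapsto T^{\b}_{t}$ is analytic in horizontal strips, because $m_{\b}(s) / \Gamma(\b + n/2)$ is an entire function of $\b$ for each fixed $s > 0$ (as seen from the power series of $J_\nu$).

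The $L^2$ endpoint is handled by a Littlewood--Paley $g$-function argument. Writing
\begin{equation*}
\big| M^{\b}f(x, t) \big|^2 \le 2 \int_0^{\infty} \big| M^{\b}f(x,s) \big|\, \big| \partial_s M^{\b}f(x,s) \big|\, ds,
\end{equation*}
Cauchy--Schwarz in $s$ together with Plancherel in $x$ reduces an $L^2$ bound for $T^{\b}_t$, uniform in $t$, to the finiteness of the two Mellin-type integrals
\begin{equation*}
\int_0^{\infty} \big| m_{\b}(s) \big|^2\, \frac{ds}{s} \quad \textrm{and} \quad \int_0^{\infty} \big| s\, m_{\b}'(s) \big|^2\, \frac{ds}{s}.
\end{equation*}
From the Bessel asymptotics $J_\nu(s) = O(s^\nu)$ near $0$ and $J_\nu(s) = O(s^{-1/2})$ at infinity, $m_{\b}(s) = O(1)$ as $s \to 0^+$ and $m_{\b}(s) = O\big(s^{-\textrm{Re}\,\b - (n-1)/2}\big)$ as $s \to \infty$, with analogous control for $m_{\b}'$. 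Both integrals converge precisely when $\textrm{Re}\,\b > 1 - n/2$, yielding $L^2$-boundedness of $M^{\b}_*$ on that half-plane.

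The opposite endpoint is obtained from a pointwise kernel bound: for $\textrm{Re}\,\b$ sufficiently large the convolution kernel $K_t^{\b} = \mathcal{F}^{-1}\big(m_{\b}(t|\cdot|)\big)$ is a radial function dominated, uniformly in $t$, by the $t$-dilate of an $L^1$ radial decreasing majorant, so $M^{\b}_* f \lesssim \mathcal{M}_{\textrm{HL}} f$ and hence $M^{\b}_*$ is bounded on every $L^p$ with $1 < p \le \infty$. Stein's analytic interpolation theorem, applied to the linearized family $\{T^{\b}_t\}$ between the $L^2$-line $\textrm{Re}\,\b = 1 - n/2 + \e$ and far-right $L^{p_1}$-lines (with $p_1$ close to $\infty$ for the upper regime and close to $1$ for the lower regime), then produces the $L^p$ bounds along the claimed lines $\b > (2-n)/p$ (for $p \ge 2$) and $\b > 1 - n + n/p$ (for $1 < p \le 2$). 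Taking $t(\cdot)$ to be a near-supremum selector recovers the bounds on $M^{\b}_*$.

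The principal technical issue is verifying the admissibility hypothesis of Stein's interpolation theorem for $\{T^{\b}_t\}$: one needs the operator norm on each vertical line $\textrm{Re}\,\b = c$ to grow at most subexponentially in $|\textrm{Im}\,\b|$, uniformly in the selector $t(\cdot)$. This forces Stirling-type control of $\Gamma(\b + n/2)$ and careful tracking of the oscillatory factors $2^{\b + n/2 - 1}$ and $J_{\b + n/2 - 1}(s)/s^{\b + n/2 - 1}$ entering $m_{\b}$, together with a quantitative version of the $g$-function estimate in the strip. Once this is in place, the complex interpolation is routine.
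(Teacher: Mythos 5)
This statement is quoted from Stein's 1976 PNAS paper \cite{Stein} and the present paper offers no proof of it; you are therefore being measured against Stein's original argument. Your overall strategy (linearize the supremum, run Stein's analytic interpolation between a square-function $L^2$ endpoint and an endpoint where the kernel has an $L^1$ radial decreasing majorant) is indeed the one Stein used, and the admissibility/Stirling considerations you flag are the right ones.

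There is, however, a genuine gap in your $L^2$ step. Both $m_\beta$ and $m_\beta'$ decay at the same rate $s^{-\mathrm{Re}\,\beta-(n-1)/2}$ as $s\to\infty$, so your first Mellin integral $\int_1^\infty |m_\beta(s)|^2\,ds/s$ converges for $\mathrm{Re}\,\beta>(1-n)/2$, but the second, $\int_1^\infty |s\,m_\beta'(s)|^2\,ds/s=\int_1^\infty |m_\beta'(s)|^2\,s\,ds$, converges only for $\mathrm{Re}\,\beta>(3-n)/2$. This is strictly worse by $1/2$ than the claimed threshold $1-n/2=(2-n)/2$; for instance at $\beta=0$, $n=3$ the classical spherical maximal operator is $L^2$-bounded but your criterion just fails. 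The correct form of the $g$-function argument is frequency-localized: decompose $f=\sum_j f_j$ with $\widehat{f_j}$ supported where $|\xi|\sim 2^j$, and on each dyadic piece apply Cauchy--Schwarz in $(x,s)$ with \emph{unweighted} $ds$ rather than the weights $ds/s$ and $s\,ds$. Plancherel then produces compensating factors $|\xi|^{-1}$ and $|\xi|^{+1}$ from the $s$-integrations of $|m_\beta(s|\xi|)|^2$ and $|\xi|^2|m_\beta'(s|\xi|)|^2$; on a fixed dyadic annulus these cancel exactly, and each piece requires only $\int_0^\infty |m_\beta|^2\,ds<\infty$ and $\int_0^\infty|m_\beta'|^2\,ds<\infty$, both of which hold precisely for $\mathrm{Re}\,\beta>(2-n)/2$. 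One then sums the pieces (after peeling off the low frequencies, handled by the Hardy--Littlewood majorant) with a small geometric margin. Without this dyadic refinement, the interpolation you set up starts from a suboptimal $L^2$-line and cannot reach the stated range of $\beta$.
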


This result was enhanced in the sense of admitted parameters and dimensions by subsequent authors:
Bourgain \cite{Bourgain}, Mockenhaupt, Seeger and Sogge \cite{MSS} and recently by Miao,
Yang and Zheng \cite{MYZ}$^{\ddag}$ 
{\footnote{$\ddag$ In \cite{MYZ} the authors use the name \textit{Stein's maximal spherical means}
for the maximal operator $M_*^{\b}f$.}}\!\!,
see the historical comments in \cite[p.\,4272]{MYZ}.
All these refinements pertain to $p > 2$ and can be stated altogether as follows, cf.\ \cite[Theorem 1.1]{MYZ}.
\begin{thm}[{\cite{MYZ}}] \label{thm:MYZ}
Let $n \ge 2$. Then $M^{\b}_*$ is bounded on $L^p(\mathbb{R}^n)$ provided that
\begin{equation} \label{cndthm12}
2 < p \le \frac{2n+2}{n-1} \;\; \textrm{and} \;\; \b > \frac{1-n}{4}+\frac{3-n}{2p}, \quad \textrm{or} \quad
	p > \frac{2n+2}{n-1} \;\; \textrm{and} \;\; \b > \frac{1-n}p.
\end{equation}
\end{thm}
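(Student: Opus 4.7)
The approach is to identify $M^{\b} f(\cdot,t)$ with a weighted half-wave propagator (modulo smoother errors) and to reduce the maximal inequality to local smoothing bounds for the wave equation via a Sobolev embedding in $t$.

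First, split $m_{\b}(t|\xi|)$ dyadically into a low-frequency piece supported in $\{t|\xi| \lesssim 1\}$ and a high-frequency piece supported in $\{t|\xi| \gtrsim 1\}$. The low-frequency piece is a smooth symbol whose maximal operator is pointwise dominated by the Hardy--Littlewood maximal function, hence bounded on every $L^p(\mathbb{R}^n)$, $p>1$. For the high-frequency piece, the Hankel asymptotic
$$
J_\nu(r) = \sqrt{\frac{2}{\pi r}}\,\cos\!\Bigl(r - \frac{\nu\pi}{2} - \frac{\pi}{4}\Bigr) + O(r^{-3/2})
$$
expresses $m_{\b}(t|\xi|)$ as a linear combination of $(t|\xi|)^{-(n-1)/2 - \b}e^{\pm it|\xi|}$ plus a remainder smoother by one order (and thus treatable by iteration). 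So matters reduce to controlling, for a smooth cutoff $\chi$ to $\{s \gtrsim 1\}$,
$$
T^{\b,\pm}_* f(x) = \sup_{t>0}\, \Bigl| t^{-(n-1)/2 - \b}\, e^{\pm it\sqrt{-\Delta}}\, \chi(t\sqrt{-\Delta})\, (-\Delta)^{-(n-1)/4 - \b/2} f(x) \Bigr|.
$$

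Next, Littlewood--Paley decompose $f = \sum_{k\ge 0} P_k f$ with $P_k$ projecting to $|\xi|\sim 2^k$. On each shell, scaling and a dyadic decomposition in $t$ reduce the supremum to $t\in[1,2]$ at a cost of the prefactor $2^{-k((n-1)/2+\b)}$. A Sobolev embedding in $t$ (for example $\sup_{t\in[1,2]}|F(t)| \le |F(1)| + \|F'\|_{L^p_t([1,2])}$) converts the $L^\infty_t$ maximum into a combination of fixed-time and space-time $L^p$-norms of $e^{\pm it\sqrt{-\Delta}}P_k f$. The decisive input is the local smoothing / decoupling bound
$$
\bigl\| e^{it\sqrt{-\Delta}}P_k f \bigr\|_{L^p(\mathbb{R}^n\times[1,2])} \lesssim 2^{k(\sigma(p)+\epsilon)} \|P_k f\|_{L^p},
$$
which, by Stein--Tomas restriction (through the $TT^*$ argument on the light cone) in the Strichartz range $p\ge 2(n+1)/(n-1)$, and by Bourgain--Demeter $\ell^2$-decoupling for the cone (building on Mockenhaupt--Seeger--Sogge and Wolff) in the intermediate range $2<p<2(n+1)/(n-1)$, yields exponents $\sigma(p)$ whose combination with the prefactor $2^{-k((n-1)/2+\b)}$ produces a convergent geometric series in $k$ exactly under the conditions \eqref{cndthm12} on $\b$.

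\textbf{Main obstacle.} The crux is the local smoothing / decoupling estimate on the light cone in the intermediate range $2<p<2(n+1)/(n-1)$, where fixed-time Strichartz is not sharp enough and one has to extract the full $1/p$ improvement from averaging in $t$ by exploiting the curvature of the cone through multi-scale wave-packet analysis. Matching the $\epsilon$-loss from decoupling against the strict inequality in the hypothesis on $\b$, handling the Bessel remainder terms, and splicing the low- and high-frequency cutoffs uniformly in $t$, constitute the technical core that underlies \cite{MYZ}.
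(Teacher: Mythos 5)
This theorem is stated with a citation to \cite{MYZ} and is not proved in the present paper; the paper only records it as background (its content, and the role of Bourgain--Demeter local smoothing, is discussed in Remark~\ref{rem:Sogge}). There is therefore no in-paper proof to compare against, but your sketch does capture the strategy of Miao, Yang and Zheng as described there: reduction of the high-frequency part of $m_\b$ to a half-wave propagator via Bessel asymptotics, Littlewood--Paley decomposition, a Sobolev embedding in $t$ to pass from $L^\infty_t$ to $L^p_{x,t}$, and local smoothing input for the resulting space-time norms. The exponent bookkeeping you indicate does reproduce the two branches of \eqref{cndthm12}.

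One inaccuracy worth flagging: you attribute the estimate in the range $p>\frac{2n+2}{n-1}$ to Stein--Tomas/Strichartz restriction, and reserve Bourgain--Demeter for $2<p<\frac{2n+2}{n-1}$. This is backwards in an essential way. The $L^2$-based Strichartz bound combined with Bernstein does \emph{not} give the $L^p\to L^p$ local smoothing exponent $\gamma>\frac{n-1}{2}-\frac{n}{p}$ needed for the second branch. The decisive input in both ranges is the Bourgain--Demeter $\ell^2$-decoupling theorem for the cone, which yields the sharp (up to $\epsilon$) exponent at the critical $p=\frac{2n+2}{n-1}$; the sub-critical branch $\gamma>\frac{n-1}{2}\bigl(\frac12-\frac1p\bigr)$ comes from interpolating this with $L^2$, and the super-critical branch $\gamma>\frac{n-1}{2}-\frac np$ from interpolating with the fixed-time $L^\infty$ bound. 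This is exactly what \eqref{eq:BD} in Remark~\ref{rem:Sogge} encodes. The rest of your outline is sound.
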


Theorem \ref{thm:Stein} is known to be optimal for $1<p\le2$. The range of $\b$ in Theorem \ref{thm:MYZ}, for $p > 2$,
is strictly wider than in Theorem \ref{thm:Stein}.
However, according to our best knowledge, it is not known whether it is already optimal when $n\ge3$.
On the other hand, a recent \cite{GWZ} striking proof of the so-called \textit{local smoothing conjecture} in the case
$n=2$ allows to enlarge the range of $\beta$ in Theorem \ref{thm:MYZ}, see Remark \ref{rem:Sogge} below
for more details.
We remark that both Theorems \ref{thm:Stein} and \ref{thm:MYZ} were originally proved for complex $\b$, but
for our purposes it is enough to state them for real values of the parameter.
An important open problem is to extend, if possible, Theorem~\ref{thm:MYZ} to the optimal range of $\beta$ and $p>2$.

A restriction of $M^{\b}$ to radially symmetric functions is still of great interest and, moreover,
admits a more explicit finer analysis that potentially leads to more general or stronger theorems.
Actually, the maximal operator $M^{\a,\b}_{*}$ we study generalizes the restriction of $M^{\b}_*$ to radial functions since,
in a sense, it covers a continuous range of dimensions $n=2\a+2$, $\a > -1$. Formally, for a radial function
$f=f_0(|\cdot|)$ in $L^2(\mathbb{R}^n)$, $n \ge 2$, $M^{\b}f(x,t)$ is for each $t>0$ a radial function in $x \in \mathbb{R}^n$
whose profile is given by $M_t^{n/2-1,\b}f_0$; see \cite[Corollary 4.2]{CNR}. Clearly, the maximal operators $M^{\b}_*$
and $M^{\a,\b}_*$ are connected in the same way. Thus, $M^{\b}_{*}$ is bounded on
$L^p_{\textrm{rad}}(\mathbb{R}^n,|\cdot|^{\gamma}dx)$ if and only if $M^{n/2-1,\b}_{*}$ is bounded on
$L^p(\mathbb{R}_+,x^{\gamma}d\mu_{n/2-1})$; here and elsewhere the subscript ``rad'' indicates the subspace of radial functions.
Analogous relations hold for weak and restricted weak type boundedness.

The announced consequences of Theorems \ref{thm:strong} and \ref{thm:weak} pertain to the operator $M^{\b}_*$.
Before stating them, however, it is convenient to specify the two theorems to the natural weight $\delta = 2\a+1$.
The first corollary below is a sharp improvement of \cite[Corollary 1.7]{CNR2}, where
the enhancement relies on including $p=1$ for the negative part, and, under Condition \eqref{cnn},
on including parameters satisfying $-\b/(2\a+1) = 1/p$, $0 < -\b \notin \mathbb{N}$ and $\a+\b \le 1/2$ for the positive result.

\begin{coro} \label{cor:17sh}
Assume that $\a > -1$ and $\a+\b > -1/2$. Let $1 \le p < \infty$.
Then $M^{\a,\b}_*$ is bounded on $L^p(\mathbb{R}_+,d\mu_{\a})$
if and only if $p > 1$ and Condition \eqref{cnn} is satisfied and
\begin{equation*}
\left\{
\begin{array}{c}
\frac{-\b}{2\a+1} \le \frac{1}p < 1 - \frac{1-\b}{2\a+2} \\
\textrm{\footnotesize{with the lower bound suppressed when $\b \ge 0$}} \\
\textrm{\footnotesize{and with the upper bound suppressed when $\b \ge 1$}}
\end{array}
\right\}.
\end{equation*}
\end{coro}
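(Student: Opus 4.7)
The corollary is a direct specialization of Theorem~\ref{thm:strong} to the weight $\delta = 2\alpha+1$, since $L^p(\mathbb{R}_+, d\mu_\alpha) = L^p(\mathbb{R}_+, x^{2\alpha+1}dx)$. The plan is to substitute $\delta = 2\alpha+1$ into each inequality appearing in Theorem~\ref{thm:strong} and verify that the resulting conditions rearrange to the form stated in the Corollary; the ``if and only if'' character then transfers automatically.

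For the upper bound, substituting $\delta = 2\alpha+1$ into $\delta < \big(2\alpha+(\beta\wedge 1)+1\big)p - 1$ gives, after elementary rearrangement, $1/p < \big(2\alpha+(\beta\wedge 1)+1\big)/(2\alpha+2)$. Using $1-(\beta\wedge 1) = \max(1-\beta, 0)$, this reduces to the Corollary's $1/p < 1 - (1-\beta)/(2\alpha+2)$ when $\beta < 1$, and to just $p > 1$ when $\beta \ge 1$, matching the ``upper bound suppressed when $\beta \ge 1$'' clause. Condition \eqref{cnn} is simply carried over unchanged.

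For the lower bound I split on the sign of $\beta$. If $\beta < 0$, the hypothesis $\alpha+\beta > -1/2$ forces $\alpha > -1/2$, hence $2\alpha+1 > 0$. Since $\beta p < 0 < 1$, the weakened branch of Theorem~\ref{thm:strong} applies and the inequality $-\beta p \le 2\alpha+1$ rearranges to the stated $-\beta/(2\alpha+1) \le 1/p$. If $\beta \ge 0$, I need to verify that the lower bound $-(\beta p \wedge 1) \lessgtr 2\alpha+1$ of Theorem~\ref{thm:strong} is automatic under the remaining hypotheses, which justifies its suppression in the Corollary. This is immediate when $\alpha \ge -1/2$, since then $2\alpha+1 \ge 0 \ge -(\beta p \wedge 1)$. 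In the remaining sub-range $-1 < \alpha < -1/2$, the constraint $\alpha+\beta > -1/2$ forces $\beta > -\alpha - 1/2 > 0$; combining this with Condition~\eqref{cnn} (which gives $\beta p > \beta/(\alpha+\beta+1/2)$) and a short manipulation that reduces to $\beta(\alpha+1) \ge -(\alpha+1/2)^2$, a trivial inequality with the left-hand side nonnegative and the right-hand side nonpositive, one obtains $\beta p > -(2\alpha+1)$, as needed.

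The only mildly nontrivial obstacle is the verification in the last sub-range $\beta \ge 0$, $-1 < \alpha < -1/2$, where Condition~\eqref{cnn} must be invoked to force the lower bound; everything else is a direct algebraic rewrite of the inequalities of Theorem~\ref{thm:strong}.
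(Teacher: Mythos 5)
Your proposal is correct and takes the same route the paper intends: the corollary is stated in the paper without explicit proof precisely because it is the specialization of Theorem~\ref{thm:strong} to $\delta=2\alpha+1$, which you carry out faithfully, including the only non-obvious step (using Condition~\eqref{cnn} together with $\beta(\alpha+1)\ge -(\alpha+1/2)^2$ to show the lower bound is automatic in the sub-range $\beta\ge 0$, $-1<\alpha<-1/2$). The algebraic rearrangements check out.
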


\begin{coro} \label{cor:wrw}
Assume that $\a > -1$ and $\a+\b > -1/2$. Let $1 \le p < \infty$.
\begin{itemize}
\item[(a)]
The maximal operator $M^{\a,\b}_*$ is of weak type $(p,p)$ with respect to $(\mathbb{R}_+,d\mu_{\a})$
if and only if Condition \eqref{cn5n} is satisfied and
\begin{equation*}
\left\{
\begin{array}{c}
\frac{-\b}{2\a+1} \le \frac{1}p < 1 - \frac{1-\b}{2\a+2} \\
\textrm{\footnotesize{with the second inequality weakened when $p=1$}} \\
\textrm{\footnotesize{and with the lower bound suppressed when $\b \ge 0$}} \\
\textrm{\footnotesize{and with the upper bound suppressed when $\b \ge 1$}}
\end{array}
\right\}.
\end{equation*}
\item[(b)]
The maximal operator $M^{\a,\b}_*$ is of restricted weak type $(p,p)$ with respect to $(\mathbb{R}_+,d\mu_{\a})$
if and only if Condition \eqref{cnnn} is satisfied and
\begin{equation*}
\left\{
\begin{array}{c}
\frac{-\b}{2\a+1} \le \frac{1}p \le 1 - \frac{1-\b}{2\a+2} \\
\textrm{\footnotesize{with the lower bound suppressed when $\b \ge 0$}} \\
\textrm{\footnotesize{and with the upper bound suppressed when $\b \ge 1$}}
\end{array}
\right\}.
\end{equation*}
\end{itemize}
\end{coro}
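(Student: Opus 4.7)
The plan is to specialize Theorem \ref{thm:weak} to the weight $\delta = 2\a + 1$ and to reorganize the resulting $\delta$-interval as an interval for $1/p$, carefully tracking the special clauses in Theorem \ref{thm:weak} and the regimes in which the endpoint conditions degenerate or become redundant.

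The condition on $1/p$ relative to $\a+\b+1/2$ from Theorem \ref{thm:weak}, together with the special clause involving $\b \in \ZZ$ when $\a+\b = 1/2$, transfers verbatim and yields Condition \eqref{cn5n} in part (a) and Condition \eqref{cnnn} in part (b). Substituting $\delta = 2\a+1$ in the upper inequality $\delta < (2\a + (\b\wedge 1)+1)p - 1$ and dividing by the positive quantity $p(2\a+2)$ gives
\[
\frac{1}{p} < 1 - \frac{1-(\b\wedge 1)}{2\a+2},
\]
with the inequality weakened to ``$\le$'' in the subcases specified in Theorem \ref{thm:weak}. When $\b \ge 1$ the right-hand side equals $1$, so the bound collapses to $1/p < 1$ (or $1/p \le 1$ under the $p=1$ weakening in the weak-type case), which is automatic from the standing assumption $p \ge 1$; this is precisely the ``upper bound suppressed when $\b \ge 1$'' clause. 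When $\b < 1$ one recovers the bound displayed in the statement.

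For the lower bound $-(\b p \wedge 1) < 2\a+1$ (weakened to ``$\le$'' when $\b p < 1$), I argue by cases. If $\b p \ge 1$, the inequality reduces to $\a > -1$, hence is automatic. If $\b p < 1$ and $\b < 0$, the hypothesis $\a+\b > -1/2$ forces $\a > -1/2$ so that $2\a+1 > 0$; dividing $-\b p \le 2\a+1$ through yields the stated $\frac{-\b}{2\a+1} \le \frac{1}{p}$. If $\b \ge 0$ and $\a \ge -1/2$, the inequality $-\b p \le 2\a+1$ is trivially satisfied. The delicate subcase is $\b > 0$ with $\a < -1/2$ (which forces $\b > -\a - 1/2 > 0$), where the lower bound is the nontrivial condition $\b p \ge -(2\a+1)$. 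I will show this is implied by the Bochner--Riesz-type condition $1/p < \a+\b+1/2$ coming from \eqref{cn5n}/\eqref{cnnn}; equivalently, I check
\[
\a + \b + \tfrac{1}{2} \le \frac{\b}{-(2\a+1)},
\]
which after clearing denominators amounts to $2(\a+1)(\a+\b) + 1/2 \ge 0$. This follows from $\a+1 > 0$, $\a+\b > -1/2$ via the estimate $2(\a+1)(\a+\b) > -(\a+1)$ together with $\a < -1/2$. Hence the lower bound is absorbed in this regime, justifying the ``lower bound suppressed when $\b \ge 0$'' clause.

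The main obstacle is precisely this last absorption step; all other transformations amount to elementary rearrangement of the inequalities already provided by Theorem \ref{thm:weak}. Combining the translated upper and lower bounds with Conditions \eqref{cn5n} and \eqref{cnnn} then yields both parts (a) and (b) of the Corollary.
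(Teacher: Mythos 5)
Your proof is correct and takes essentially the same approach the paper intends: substitute $\delta = 2\alpha+1$ into Theorem~\ref{thm:weak} and algebraically rearrange the resulting inequalities as bounds on $1/p$. The paper presents Corollary~\ref{cor:wrw} without an explicit proof, so your contribution is to work out the details carefully, and you identify correctly the one genuinely nontrivial step, namely absorbing the lower bound $-\beta p \le 2\alpha+1$ into the Bochner--Riesz-type condition when $\beta > 0$ and $\alpha < -1/2$; your reduction of $\alpha+\beta+1/2 \le \frac{\beta}{-(2\alpha+1)}$ to $2(\alpha+1)(\alpha+\beta)+1/2 \ge 0$ and the ensuing estimate are correct, and all edge cases ($\beta p \ge 1$, $\beta < 0$ forcing $\alpha > -1/2$, $\beta \ge 1$ trivializing the upper bound) are handled.
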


With the above corollaries, we can state a sharp improvement of \cite[Corollary 1.8]{CNR2} and an analogous
result for weak and restricted weak type inequalities.
First, however, we must confess that the statement of \cite[Corollary 1.8]{CNR2} missed the global condition
$\b > 1/p+(1-n)/2$ coming from \cite[(1.3)]{CNR2}.
In consequence, \cite[Figure 1]{CNR2} and \cite[Conjecture~1.9]{CNR2} should also be revised,
which is done below, see Figure \ref{fig:sh} and Conjecture \ref{conj:19sh}.
\begin{coro} \label{cor:18sh}
Let $n \ge 2$, $\beta>(1-n)/2$ and $1 \le p < \infty$. Then $M^{\b}_*$ is bounded on $L^p_{\textrm{rad}}(\mathbb{R}^n)$
if and only if $p > 1$ and
\begin{align} \label{id:151}
\begin{cases}
\b > 1 -n + \frac{n}p & \textrm{when} \;\;  p < 2, \\
\b > \frac{1}p + \frac{1-n}2 & \textrm{when} \;\; 2 \le p \le \frac{2n}{n-1}, \\
\b \ge \frac{1-n}p & \textrm{when} \;\; p > \frac{2n}{n-1}.
\end{cases}
\end{align}
\end{coro}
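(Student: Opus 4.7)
The plan is to derive this corollary from Corollary \ref{cor:17sh} via the radial-to-one-dimensional correspondence recorded right after Theorem \ref{thm:MYZ}: a radial function $f = f_0(|\cdot|)$ on $\mathbb{R}^n$ lies in $L^p(\mathbb{R}^n)$ exactly when $f_0 \in L^p(\mathbb{R}_+, d\mu_{n/2-1})$, since $d\mu_{n/2-1}(x) = x^{n-1}dx$ is the radial density on $\mathbb{R}^n$, and the profile of $M^\beta f(\cdot,t)$ coincides with $M_t^{n/2-1,\beta}f_0$. Consequently, $M^\beta_*$ is bounded on $L^p_{\textrm{rad}}(\mathbb{R}^n)$ if and only if $M^{n/2-1,\beta}_*$ is bounded on $L^p(\mathbb{R}_+, d\mu_{n/2-1})$. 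The hypothesis $\beta > (1-n)/2$ is precisely $\alpha + \beta > -1/2$ with $\alpha = n/2-1 > -1$, so Corollary \ref{cor:17sh} applies directly.

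Setting $\alpha = n/2-1$ in Corollary \ref{cor:17sh} characterizes boundedness as $p > 1$ together with three conditions: (A) Condition \eqref{cnn} reads $\beta > 1/p + (1-n)/2$; (B) the lower bound $-\beta/(2\alpha+1) \le 1/p$ becomes $\beta \ge (1-n)/p$, active only when $\beta < 0$; (C) the upper bound $1/p < 1 - (1-\beta)/(2\alpha+2)$ becomes $\beta > 1-n+n/p$, active only when $\beta < 1$. The target \eqref{id:151} is then obtained by identifying the binding constraint in each of the three ranges of $p$.

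The remaining work is elementary algebra. A quick comparison shows that $1/p + (1-n)/2$ and $1-n+n/p$ coincide at $p = 2$, with (C) strictly exceeding (A) for $p < 2$; similarly $1/p + (1-n)/2$ and $(1-n)/p$ coincide at $p = 2n/(n-1)$, with (B) strictly exceeding (A) for $p > 2n/(n-1)$. This gives the three pieces of \eqref{id:151}: (C) for $p < 2$, (A) for $2 \le p \le 2n/(n-1)$, and (B) for $p > 2n/(n-1)$. The suppressions at $\beta \ge 0$ and $\beta \ge 1$ cause no issue because, in each piece of \eqref{id:151}, whenever the suppression activates the corresponding bound is anyway satisfied trivially for $p \ge 1$ and $n \ge 2$. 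The strict-versus-weak distinction at $p = 2n/(n-1)$ (strict in the middle piece, weak in the upper piece) is explained by (A) and (C) being strict while (B) is weak: equality $\beta = (1-n)/p$ is admissible exactly once (A) holds strictly, which happens for $p > 2n/(n-1)$.

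The main obstacle is nothing more than this careful case analysis at the transition points $p = 2$ and $p = 2n/(n-1)$, together with tracking of the strict/weak dichotomy; no genuine analytic difficulty arises, but the bookkeeping must be pedantic to match the exact form of \eqref{id:151}. A further minor point is to note explicitly that the missing global condition $\beta > 1/p + (1-n)/2$ (referenced in the paragraph preceding the corollary as the correction to \cite[Corollary 1.8]{CNR2}) is automatically absorbed into (A), so the revised statement is now internally consistent.
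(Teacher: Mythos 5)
Your proposal is correct and is essentially the paper's own route: the paper presents Corollary \ref{cor:18sh} as a direct consequence of Corollary \ref{cor:17sh} via the radial correspondence ($\alpha = n/2-1$, $\delta = 2\alpha+1 = n-1$), and the remaining work is exactly the algebraic case analysis you carry out, comparing the three thresholds $1-n+n/p$, $1/p+(1-n)/2$, and $(1-n)/p$ at the crossover points $p=2$ and $p=2n/(n-1)$ and tracking the strict/weak dichotomy. One pedantic remark: in the sentence about the suppressed bounds being trivially satisfied, the range should read $p>1$ rather than $p\ge 1$ (at $p=1$ the upper bound $1-n+n/p$ equals $1$, so $\beta\ge 1$ gives only $\beta\ge$ the threshold, not strict inequality), but since $p>1$ is anyway part of the characterization this does not affect the argument.
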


\begin{figure}
\centering
\begin{tikzpicture}[scale=3.6]
\small{
\draw[arrows=-angle 60] (0,-1.54) -- (0,1.2);
\draw[arrows=-angle 60] (0,0) -- (2.33,0);
\node at (-0.1,0) {$O$};
\node at (2.05,1.05) {$P$};
\node at (-0.1,1.15) {$ \beta$};
\node at (2.33,-0.1) {$\tfrac{1}{p}$};
\node at (-0.1,1) {$1$};
\node at (2,-0.1) {$1$};
}
\fill[pattern=dots, pattern color=lightgray]
(0,1) %
--(0,0) 
-- (6/8,-9/8) 
-- (1,-1) 
-- (2,1) 
-- (0,1); 

\tiny{
\node at (-0.2,-9/8)  {$-\tfrac{(n-1)^2}{2n}$}; 
\node at (0.58,0.1) {$\tfrac{n-1}{2n+2}$}; 
\node at (1,0.1) {$\tfrac{1}{2}$}; 
\node at (-0.2,-0.85) {$-\tfrac{(n-1)^2}{2n+2}$}; 
\node at (-0.15,-1.01) {$\tfrac{2-n}{2}$}; 
\node at (0.77,0.1) {$\tfrac{n-1}{2n}$}; 
\node at (-0.15,-0.5) {$\tfrac{3-n}{2}$}; 
\node at (-0.15,-1.5) {$\tfrac{1-n}2$};
}
\small{
\node at (0.5,-0.85) {$A$};
\node at (6/8, -1.2) {$C$};
\node at (1.05,-1.05) {$B$};
\draw[very thin, dashed] (0,1) -- (2,1);
\draw[very thin, dashed] (2,0) -- (2,1);
\draw[very thin, dashed] (0,-9/8) -- (6/8,-9/8); 
\node[regular polygon,regular polygon sides=4,draw=black, fill=black, inner sep=0pt,minimum size=5pt] (b) at (6/8,-9/8) {}; 
\draw[very thin, dashed] (6/8,0) -- (6/8,-9/8); 
\draw[very thin, dashed] (0,-0.9) -- (0.6,-0.9); 
\draw[very thin, dashed] (0.6,0) -- (0.6,-0.9); 
\draw[very thin, dashed] (0,-1) -- (1,-1); 
\node[regular polygon,regular polygon sides=4,draw=black, fill=black, inner sep=0pt,minimum size=5pt] (b) at (1,-1) {}; 
\draw[very thin, dashed] (1,0) -- (1,-1); 
\draw[very thick] (0,0) -- (0.6,-0.9); 
\draw[very thick, dashed] (0.6,-0.9) -- (1,-1); 
\draw[very thick, dotted] (1,-1) -- (2,1); 
\node[regular polygon,regular polygon sides=3, draw=black, fill=black, inner sep=0pt,minimum size=6pt] (b) at (2,1) {}; 
\draw[very thick] (0.6,-0.9) -- (6/8,-9/8); 
\draw[very thick, dashed] (0,0) -- (1,-1); 
\draw[very thick, dotted] (6/8,-9/8) -- (1,-1); 
\draw[very thin, dashed] (0,-0.5) -- (0.02,-0.5);
\draw[very thin, dashed] (0,-1.5) -- (0.02,-1.5);
}
\end{tikzpicture} 

\caption{Regions visualizing differences between Stein's conditions \eqref{cndthm11} (region $OBP$),
the less restrictive conditions \eqref{cndthm12} due to Miao et al.\ (region $OABP$),
and the least restrictive conditions from our radial case in Corollary \ref{cor:18sh} and Conjecture \ref{conj:19sh}
for the general case (region $OCBP$), respectively. Observe that $A$ is
\textit{Bourgain-Demeter's point} and $C$ is \textit{Sogge's conjecture point}, see Remark \ref{rem:Sogge}.
The operator $M^{\b}_*$ considered on the subspace of radial functions is of strong type $(p,p)$ for $(1/p,\b)$
in the dotted region and on $OC$ excluding $C$, of weak but not strong type $(p,p)$ when $(1/p,\b) = P=(1,1)$, and
of restricted weak type $(p,p)$ but not weak $(p,p)$ for $(1/p,\b)$ on $CB$ including endpoints and on $BP$ excluding $P$.
Picture for $n=4$, different axes scaling.} \label{fig:sh}
\end{figure}
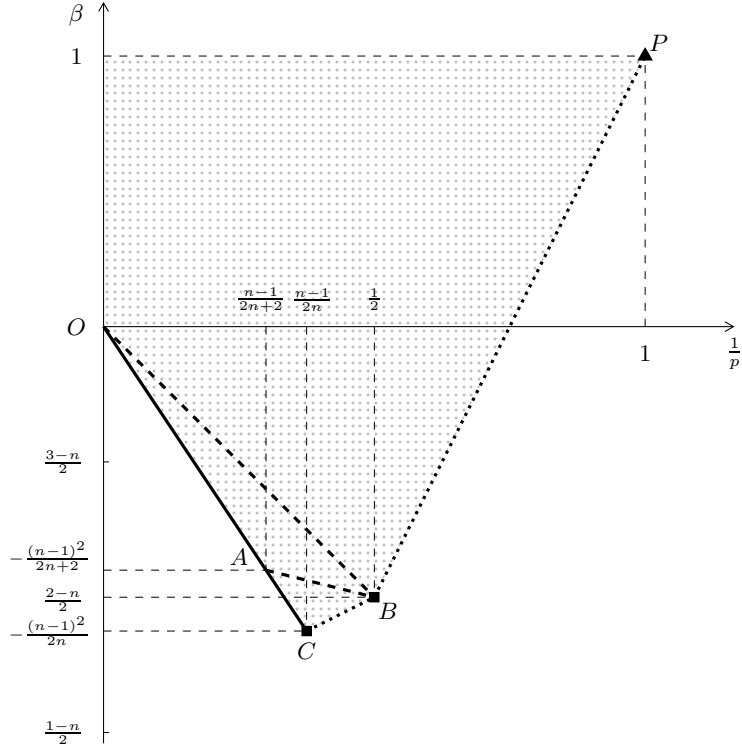

Observe that the assumption $\beta>(1-n)/2$ in Corollary \ref{cor:18sh}
(coming from the assumption $\a+\b > -1/2$ in Corollary \ref{cor:17sh})
is strictly weaker than Condition \eqref{id:151}.
Further, note that for $2 < p < \frac{2n+2}{n-1}$ the conditions of Corollary \ref{cor:18sh} are strictly less restrictive
than Condition \eqref{cndthm12} due to Miao et al. This can be seen in Figure~\ref{fig:sh}, where
the endpoint results related to Corollary \ref{cor:18sh} which are contained in Corollary \ref{cor:211} are also reflected.
We believe this indicates non-optimality of the yet known results on $L^p$-boundedness of $M^{\b}_*$
stated in Theorem \ref{thm:MYZ}; cf.\ \cite[Section 3, Problem (1)]{MYZ}.

\begin{coro} \label{cor:211}
Let $n \ge 2$, $\beta>(1-n)/2$ and $1 \le p < \infty$.
\begin{itemize}
\item[(a)]
The maximal operator $M^{\b}_*$
satisfies the weak type $(p,p)$ estimate on the subspace of radial functions on $\mathbb{R}^n$ 
if and only if
$$
\begin{cases}
\b \ge 1 & \textrm{when} \;\; p=1, \\
\b > 1 -n + \frac{n}p & \textrm{when} \;\; 1 < p < 2, \\
\b > \frac{1}p + \frac{1-n}2 & \textrm{when} \;\; 2 \le p \le \frac{2n}{n-1}, \\
\b \ge \frac{1-n}p & \textrm{when} \;\; p > \frac{2n}{n-1}.
\end{cases}
$$
\item[(b)]
The maximal operator $M^{\b}_*$
satisfies the restricted weak type $(p,p)$ estimate on the subspace of radial functions on $\mathbb{R}^n$ 
if and only if
$$
\begin{cases}
\b \ge 1 -n + \frac{n}p & \textrm{when} \;\; p < 2, \\
\b \ge \frac{1}p + \frac{1-n}2 & \textrm{when} \;\; 2 \le p \le \frac{2n}{n-1}, \\
\b \ge \frac{1-n}p & \textrm{when} \;\; p > \frac{2n}{n-1}.
\end{cases}
$$
\end{itemize}
\end{coro}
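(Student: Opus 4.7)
The plan is to deduce Corollary \ref{cor:211} as a direct specialization of Corollary \ref{cor:wrw}. Recall from the paragraph preceding Corollary \ref{cor:17sh} (see also \cite[Corollary 4.2]{CNR}) that for a radial $f=f_0(|\cdot|)\in L^2(\mathbb{R}^n)$ the function $M^{\b}f(\cdot,t)$ is radial with profile $M^{n/2-1,\b}_t f_0$, and the same identification passes to the maximal operators. Consequently, a (restricted) weak type $(p,p)$ bound for $M^{\b}_*$ on $L^p_{\mathrm{rad}}(\mathbb{R}^n)$ is equivalent to the analogous bound for $M^{n/2-1,\b}_*$ on $L^p(\mathbb{R}_+,d\mu_{n/2-1})$. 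It therefore suffices to rewrite Corollary \ref{cor:wrw} in the variables $\a=n/2-1$, for which $2\a+1=n-1$ and $2\a+2=n$.

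Under this substitution the inequalities of Corollary \ref{cor:wrw} translate into three candidate lower bounds on $\b$: the lower endpoint bound $\b\ge(1-n)/p$, suppressed when $\b\ge 0$; the bound $\b>1/p+(1-n)/2$ arising from \eqref{cn5n} in part (a), or $\b\ge 1/p+(1-n)/2$ arising from \eqref{cnnn} in part (b); and the upper endpoint bound $\b>1-n+n/p$, suppressed when $\b\ge 1$ and weakened to ``$\ge$'' in part (a) when $p=1$ (always ``$\ge$'' in part (b)).

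The core of the proof is to determine, on each subinterval of $p$, which of the three bounds is binding. The elementary identities
\begin{align*}
\Bigl(1-n+\tfrac{n}{p}\Bigr)-\Bigl(\tfrac{1}{p}+\tfrac{1-n}{2}\Bigr) &= (n-1)\Bigl(\tfrac{1}{p}-\tfrac{1}{2}\Bigr), \\
\Bigl(\tfrac{1}{p}+\tfrac{1-n}{2}\Bigr)-\tfrac{1-n}{p} &= \tfrac{n}{p}-\tfrac{n-1}{2}
\end{align*}
show that the three bounds cross precisely at $p=2$ and $p=2n/(n-1)$, and that the largest of them is $1-n+n/p$ on $[1,2)$, $1/p+(1-n)/2$ on $[2,2n/(n-1)]$, and $(1-n)/p$ on $(2n/(n-1),\infty)$. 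On each subinterval the largest bound produces the condition stated in Corollary \ref{cor:211}, and the suppression clauses guarantee that the two non-dominant bounds are automatic whenever the dominant one holds. The weakening of the upper bound at $p=1$ in part (a) is what delivers the endpoint $\b\ge 1$ in the $p=1$ case.

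The only remaining subtlety is the parity-dependent modification of \eqref{cn5n}/\eqref{cnnn} at the boundary $\a+\b=1/2$, i.e., at $\b=(3-n)/2$. However, $1/p+(1-n)/2=(3-n)/2$ forces $p=1$, and at $p=1$ both the upper bound in part (a) (weakened to ``$\ge$'') and the corresponding ``$p<2$'' bound in part (b) force $\b\ge 1$, which for $n\ge 2$ already excludes $\b=(3-n)/2\le 1/2$. Hence these parity-dependent modifications never alter the stated conditions, and the main difficulty lies not in any single step but in the careful bookkeeping of three overlapping bounds, their strict versus weak form, and the suppression rules across the three $p$-regimes.
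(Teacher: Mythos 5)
Your proposal is correct, and it is exactly the route the paper implicitly takes: specialize Corollary \ref{cor:wrw} to $\a = n/2-1$, $\delta = 2\a+1 = n-1$, and use the transference stated right before Corollary \ref{cor:17sh} (radial $L^p$ mapping properties of $M^{\b}_*$ on $\mathbb{R}^n$ are equivalent to those of $M^{n/2-1,\b}_*$ on $L^p(\mathbb{R}_+, d\mu_{n/2-1})$, and the paper notes the same holds at the weak and restricted weak type level). Your bookkeeping of the three candidate lower bounds on $\beta$, the crossing points at $p=2$ and $p=2n/(n-1)$, and the observation that the parity-dependent modification of \eqref{cn5n}/\eqref{cnnn} can only be active at $1/p=\a+\b+1/2=1$, i.e.\ $p=1$, where the upper-endpoint bound already forces $\beta\ge 1 > (3-n)/2$, is sound and complete.
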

Comparing to \cite[Corollary 1.8]{CNR2} (after the relevant correction of the statement), Corollary~\ref{cor:18sh}
extends $L^p_{\textrm{rad}}(\mathbb{R}^n)$-boundedness of $M^{\b}_*$ to the case when $\b=(1-n)/p$ and $p > 2n/(n-1)$.
Concerning Corollary~\ref{cor:211}(a), its novelty, comparing to previously known results
and in view of Corollary~\ref{cor:18sh},
is the positive part in case $p=1$, and the negative part in case $\b\neq 0$
(the latter in case $\b=0$ follows from a counterexample of Stein \cite{Stein}).
In Corollary \ref{cor:211}(b), in view of item (a) and previously known results, the novel part is
the inclusion of the cases $\b = 1/p+(1-n)/2$, $2 \le p \le 2n/(n-1)$ and $\b=1-n+n/p$, $1<p<2$ for the positive part,
both except for the case when $\b=0$ where the result is due to Bourgain \cite{Bourgain} ($n\ge 3$) and Leckband \cite{Le} ($n=2$).
The negative part of Corollary \ref{cor:211}(b) is new.

The following statement corrects and strengthens \cite[Conjecture 1.9]{CNR2}.
The strengthening relies on extending the range of parameters for which $M^{\b}_*$ is conjectured to be bounded on
$L^p(\mathbb{R}^n)$ for $\b = (1-n)/p$ when $p > 2n/(n-1)$
(this corresponds to the segment $OC$ without endpoints on Figure \ref{fig:sh}).
\begin{conj} \label{conj:19sh}
Let $n \ge 2$ and $1 \le p < \infty$. The operator $M^{\b}_*$ is bounded on $L^p(\mathbb{R}^n)$ if and only if
$p > 1$ and Condition \eqref{id:151} is satisfied, i.e.
\begin{equation} \label{eq:nuestro}
\left\{
\begin{array}{c}
\b > \max\Big( 1-n + \frac{n}p, \frac{1}p + \frac{1-n}2, \frac{1-n}p\Big) \\
\textrm{\footnotesize{with the inequality weakened when $p > \frac{2n}{n-1}$}}
\end{array}
\right\}.
\end{equation}
\end{conj}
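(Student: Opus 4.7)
The strategy splits naturally into necessity and sufficiency. For the necessity part, I would exploit that $L^p_{\text{rad}}(\mathbb{R}^n)$ sits isometrically in $L^p(\mathbb{R}^n)$ and is preserved by $M^{\b}$; therefore $L^p$-boundedness of $M^{\b}_*$ implies $L^p_{\text{rad}}$-boundedness, and Corollary \ref{cor:18sh} immediately translates the sharp radial necessary conditions into \eqref{eq:nuestro}, while Corollary \ref{cor:17sh} rules out $p=1$.

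For sufficiency, the range $1 < p \le 2$ is exactly Stein's Theorem \ref{thm:Stein}, which already delivers the sharp condition $\b > 1 - n + n/p$; no refinement is needed there. The substantive open range is $p > 2$. Here I would follow the now-standard reduction via the Bessel asymptotics
\[
m_{\b}(s) = c_{n,\b}\, s^{-\b-(n-1)/2}\cos\bigl(s-\theta_{n,\b}\bigr) + O\bigl(s^{-\b-(n+1)/2}\bigr), \qquad s \to \infty,
\]
Littlewood--Paley decomposing the principal wave part into dyadic frequency annuli $\{|\xi|\sim 2^k\}$, and bounding the supremum in $t$ of each piece by a local smoothing estimate
\[
\bigl\| e^{it\sqrt{-\Delta}} g \bigr\|_{L^p([1,2]\times \mathbb{R}^n)} \lesssim \|g\|_{L^p_\sigma(\mathbb{R}^n)}
\]
(replacing $\sup_t$ by a space--time $L^p$ norm via the Kolmogorov--Seliverstov--Plessner trick). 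A geometric summation over $k$ then converts the available local smoothing exponent $\sigma$ into the range of $\b$ for which $M^{\b}_*$ is $L^p$-bounded; Sogge's conjectural exponent $\sigma = (n-1)(1/2-1/p)-1/p+\varepsilon$ for $p \ge 2n/(n-1)$, together with its analogue for smaller $p > 2$, would yield precisely \eqref{eq:nuestro}.

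The decisive obstacle, and the reason \eqref{eq:nuestro} is stated as a conjecture, is that the local smoothing bounds required in the triangle $2 < p \le 2n/(n-1)$ and on the endpoint line $\b = (1-n)/p$ for $p \ge 2n/(n-1)$ are precisely Sogge's \emph{local smoothing conjecture}, which has only recently been proved by Guth--Wang--Zhang for $n = 2$ and remains open in dimensions $n \ge 3$. In dimension two, combining that theorem with the reduction above and with the radial endpoint analysis of Theorems \ref{thm:strong}--\ref{thm:weak} should settle Conjecture \ref{conj:19sh} up to a separate restricted weak-type/interpolation step needed to include the endpoint $\b = -1/p$; in higher dimensions one would need a substantive strengthening of Bourgain--Demeter decoupling beyond what is currently available, so any realistic attack at present establishes \eqref{eq:nuestro} only conditionally on further progress in local smoothing.
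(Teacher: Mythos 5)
This statement is a \emph{conjecture} in the paper, not a theorem, so there is no proof to compare against; the paper's justification for it is the surrounding discussion and Remark~\ref{rem:Sogge}. Your analysis tracks that discussion closely: necessity follows by restricting to radial functions and invoking Corollary~\ref{cor:18sh} (the single citation to that corollary already rules out $p=1$ as well, making the appeal to Corollary~\ref{cor:17sh} redundant), sufficiency for $1<p\le 2$ is Stein's Theorem~\ref{thm:Stein}, and sufficiency for $p>2$ with strict inequalities would follow from Sogge's local smoothing conjecture via the reduction in \cite{MYZ}, which for $n=2$ is now available by \cite{GWZ}. All of this is accurate and essentially identical to the paper's own reasoning.

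The one place where your account is too optimistic is the endpoint. You suggest that for $n=2$ the full conjecture ``should'' follow from Guth--Wang--Zhang together with ``a separate restricted weak-type/interpolation step,'' but the paper makes the opposite point: even granted the full Sogge conjecture in every dimension, the Bessel-asymptotics-plus-local-smoothing route produces only the open conditions, and the \emph{weak} inequality $\b \ge (1-n)/p$ for $p>2n/(n-1)$ lies outside its reach (see the end of Remark~\ref{rem:Sogge}, where the M\"uller--Seeger radial local smoothing is explicitly said to recover Corollary~\ref{cor:18sh} ``except for $\b=(1-n)/p$ when $p>2n/(n-1)$''). In the radial case the paper obtains this endpoint by an entirely different mechanism---the kernel estimates and auxiliary maximal operators of Sections~\ref{sec:specaux}--\ref{sec:Mproof}---and no analogue of that machinery is known in the non-radial setting. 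Note also that this endpoint is a \emph{strong}-type $L^p$ statement, so there is no obvious restricted-weak-type-plus-interpolation shortcut to it. Thus Conjecture~\ref{conj:19sh} remains open even for $n=2$ at the line $\b=-1/p$, $p>4$, and your proposal should not leave the impression that GWZ settles the two-dimensional case.
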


It is known that in general $M^{\b}_*$ has better weak/restricted weak type mapping properties when restricted to the subspace
of radial functions. For instance, $M^{0}_*$ fails to be of restricted weak type $(2,2)$ for $n=2$ (see \cite{STW}),
but it satisfies the restricted weak type $(2,2)$ inequality after restriction to radial functions, cf.\ \cite{Le}.
Thus the results of the present paper do not allow to state a plausible weak/restricted weak type counterpart
of Conjecture \ref{conj:19sh}.

\begin{remark}
\label{rem:Sogge}
There is an interesting link between 
our result in Corollary \ref{cor:18sh} and the celebrated Sogge's local smoothing conjecture for the wave equation.
The proof of Theorem \ref{thm:MYZ} (i.e.\ \cite[Theorem 1.1]{MYZ}) relies on recent results concerning this conjecture.
More precisely, if $u(x,t)$ is the solution to the Cauchy problem in $\mathbb{R}^n\times \mathbb{R}_+$
\begin{equation}
\label{eq:wave}
\Delta_xu-u_{tt}=0, \qquad u(x,0)=f(x), \qquad u_t(x,0)=g(x),
\end{equation}
it was conjectured in \cite{So} that for $n\ge 2$ and $p\ge \frac{2n}{n-1}$, one has
\begin{equation}
\label{eq:sogge}
\|u\|_{L^p(\mathbb{R}^n\times [1,2])}\le C_{\gamma} \Big(\|f\|_{W^{\gamma,p}(\mathbb{R}^n)}+\|g\|_{W^{\gamma-1,p}(\mathbb{R}^n)}\Big),
	\qquad \gamma>\frac{n-1}{2}-\frac{n}{p},
\end{equation}
where $W^{\gamma,p}$ is the standard inhomogeneous Sobolev space.
The best result towards the conjecture in dimensions $n\ge 3$
was obtained by Bourgain and Demeter \cite{BD} who proved that \eqref{eq:sogge}
is true for $p\ge \frac{2n+2}{n-1}$, $n\ge 2$.
Bourgain--Demeter's local smoothing estimate and an interpolation
argument yield (cf.\ \cite[p.\,4271]{MYZ}$^\ddag$
\footnote{$\ddag$
There is a misprint in \cite[p.\,4271]{MYZ}, the space $W^{\gamma-1,p}$ in \cite[(1.12)]{MYZ} should be replaced by $W^{\gamma,p}$.
}
\!\!\!), for $p > 2$,
\begin{equation}
\label{eq:BD}
\|u\|_{L^p(\mathbb{R}^n\times [1,2])}\le C_{\gamma} \|f\|_{W^{\gamma,p}(\mathbb{R}^n)},
\qquad \gamma>\max\bigg\{\frac{n-1}{2}\Big(\frac12-\frac1p\Big),\frac{n-1}{2}-\frac{n}{p}\bigg\},
\end{equation}
assuming that $g=0$. For $n=2$ Sogge's conjecture has been completely solved only recently by Guth et al.\ \cite{GWZ}.
The work by Miao et al.\ \cite{MYZ} was published before \cite{GWZ}.

It turns out that the solution to the wave equation \eqref{eq:wave} can be expressed in terms of the so-called half-wave propagator,
which in turn is closely related to the classical spherical maximal operator $M_*^{0}$;
these operators fall into the framework of Fourier integral operators, see for instance
\cite[Section 1.1, Examples 1 and 3]{BHS}. As observed in \cite{MYZ}, the Fourier multiplier \eqref{eq:Fmultiplier}
of the generalized spherical maximal operator can be written essentially as a half-wave propagator
with a slightly different symbol with worse decay. Thus, by virtue of the just sketched relations,
the estimate \eqref{eq:BD} is used in the proof of \cite[Theorem 1.1]{MYZ} to conclude the boundedness of $M_*^{\b}$
in the range of $\beta$ as given \eqref{cndthm12}.

According to this connection between the generalized spherical maximal operator and the local smoothing for the wave equation,
a better result in terms of local smoothing will yield a better boundedness result for $M_*^{\b}$.
Indeed, under the assumption of the conjectured sharp local smoothing estimate for the wave equation \eqref{eq:sogge},
by tracing the proof of \cite[Theorem 1.1]{MYZ} (see in particular \cite[pp.\,4277--4278]{MYZ}),
it is seen that the admissible range of $\b$ could be enlarged up to our range \eqref{eq:nuestro}
with strict inequality in the lower bound for $\b$.
Since under the (immaterial) assumption $\b > (1-n)/2$ Corollary \ref{cor:18sh} gives a complete, sharp description of the boundedness of
$M^{\beta}_*$ on $L^p_{\textrm{rad}}(\mathbb{R}^n)$ (one should compare with a corrected \cite[Corollary 1.8]{CNR2},
where the description was not as complete as in the current paper), the range of $\beta$ and $p$
in Conjecture \ref{conj:19sh} is essentially the best possible.
In particular, in virtue of the recent results in \cite{GWZ},
for $n=2$ the maximal operator $M^{\b}_*$ is bounded on $L^p(\mathbb{R}^2)$ if $p > 1$ and Condition \eqref{eq:nuestro}
is satisfied with strict inequality in the lower bound for $\b$.

On the other hand, the local smoothing conjecture for radial $f$ and $g \equiv 0$ was proved by M\"uller and Seeger \cite{MS}.
Hence, the reasoning described above leads to the sufficiency of conditions in Corollary \ref{cor:18sh},
except for $\b=(1-n)/p$ when $p>2n/(n-1)$, for the boundedness of $M_*^{\b}$.
In this regard, it is worth pointing out that in the present paper the question posed in \cite[Section 3, Problem (5)]{MYZ} is answered
in the radial case. Namely, it is possible to prove
the (sharp) result in Corollary \ref{cor:18sh} without appealing to the local smoothing estimate proved in \cite{MS}. 
\end{remark}

\medskip
\noindent{\textbf{Structure of the paper}.}
In Section \ref{sec:specaux} we gather definitions and mapping properties of several auxiliary operators
that are needed in the proofs of Theorem \ref{thm:strong} and Theorem \ref{thm:weak}.
Section \ref{sec:Mproof} and Section \ref{sec:Wproof} are devoted to the proofs of the sufficiency parts in Theorem
\ref{thm:strong} and Theorem \ref{thm:weak}, respectively. The corresponding necessity parts are proved in
Section \ref{sec:necpr}. Finally, Section \ref{sec:auxTS} contains proofs of the mapping properties stated
in Section \ref{sec:specaux} for the auxiliary maximal operators.
In Appendix we comment on the limiting case $\a+\b = -1/2$.

\medskip
\noindent{\textbf{Notation}.}
Throughout the paper we use a fairly standard notation.
The minimum and the maximum of two quantities is indicated by $\wedge$ and $\vee$, respectively.
We denote $\mathbb{R}_+=(0,\infty)$ and for the sake of brevity, we often omit $\mathbb{R}_+$ when dealing
with $L^p$ related to the measure space $(\mathbb{R}_+,x^{\delta}dx)$,
i.e., $L^p(x^{\delta}dx)=L^p(\mathbb{R}_+,x^{\delta}dx)$.
We write $L^p_{\textrm{rad}}(\ldots)$ for the subspace of $L^p(\ldots)$ consisting of radial functions.
As usual, for $1\le p\le \infty$, $p'$ denotes its conjugate exponent, $1/p+1/p'=1$.
By weakening a strict inequality ``$<$'' we mean replacing it by ``$\le$''.
Analogously, by strictening a weak inequality ``$\le$'' we mean replacing it by ``$<$''.
We write $X\lesssim Y$ to indicate that $X\le CY$ with a positive constant $C$ independent of significant quantities.
We shall write $X\simeq Y$ when simultaneously $X\lesssim Y$ and $Y\lesssim X$.

\medskip
\noindent{\textbf{Basic terminology}.}
Let $1 \le p < \infty$ and $\delta \in \mathbb{R}$.
An operator $T$ is said to be of strong type $(p,p)$ with respect to the measure space $(\mathbb{R}_+,x^{\delta}dx)$
when it is bounded on $L^p(\mathbb{R}_+,x^{\delta}dx)$.
Further, $T$ is said to be of weak type $(p,p)$ with respect to the measure space $(\mathbb{R}_+,x^{\delta}dx)$
if it satisfies the weak type $(p,p)$ estimate
$$
\lambda^p \int_{\{y > 0 : |Tf(y)|> \lambda\}} x^{\delta}\, dx \lesssim \int_0^{\infty} |f(x)|^p x^{\delta}\, dx,
	\qquad \lambda > 0,
$$
uniformly in $f \in L^p(\mathbb{R}_+,x^{\delta}dx)$. This is equivalent to boundedness
of $T$ from $L^p(\mathbb{R}_+,x^{\delta}dx)$ to the Lorentz space $L^{p,\infty}(\mathbb{R}_+,x^{\delta}dx)$.
The latter space is referred to as weak $L^p(\mathbb{R}_+,x^{\delta}dx)$.
Finally, $T$ is of restricted weak type $(p,p)$ with respect to the measure space $(\mathbb{R}_+,x^{\delta}dx)$
if it is bounded between the extreme Lorentz spaces,
from $L^{p,1}(\mathbb{R}_+,x^{\delta}dx)$ to $L^{p,\infty}(\mathbb{R}_+,x^{\delta}dx)$. (Recall that, on the second
index scale, the space $L^{p,1}$ is the smallest one, and $L^{p,\infty}$ is the biggest one among $L^{p,q}$, $1\le q \le \infty$.)
When $p>1$ this property is equivalent to the weak type $(p,p)$ estimate restricted to functions $f$ that are characteristic
functions of sets of finite $x^{\delta}dx$ measure. Note that for $p=1$ the notions of weak type and restricted weak type coincide.
 
\medskip
\noindent{\textbf{Acknowledgments}.}
The authors are grateful to David Beltran for useful discussions related to Remark~\ref{rem:Sogge}.

\smallskip
\noindent
{\footnotesize{
\textbf{A.\ Nowak} was partially supported by the National Science Centre of Poland within the research
project OPUS 2017/27/B/ST1/01623.
\textbf{L.\ Roncal} was supported by the Ministry of
Science and Innovation: BCAM Severo Ochoa accreditation
CEX2021-001142-S/MICIN/AEI/ 10.13039/501100011033, PID2020-113156GB-100 funded by MCIN/AEI/ 10.13039/501100011033
and RYC2018-025477-I funded by FSE ``invest in your future'',
and by the Basque Government through the BERC 2022-2025 program. She also acknowledges IKERBASQUE fundings.
\textbf{T.Z.\ Szarek} was partially supported by the National Science Centre of Poland within the grant Opus 2017/27/B/ST1/01623,
by the Ministry of
Science and Innovation: BCAM Severo Ochoa accreditation
CEX2021-001142-S/MICIN/AEI/10.13039/501100011033, Juan de la Cierva Incorporaci\'on 2019 IJC2019-039661-I,
and the project PID2020-113156GB-100 funded by MCIN/AEI/10.13039/ 501100011033,
and by the Basque Government through BERC 2022--2025.
}}

\section{Auxiliary operators} \label{sec:specaux}

In this section we gather mapping properties of several auxiliary operators that are needed in the proofs of
our main results. More precisely, the operators in question are the Hardy operators $H_{\eta}$ and $H_{\eta}^{\infty}$,
the (centered) local Hardy-Littlewood maximal operator $L$,
and special maximal operators $N_{\eta}$, $R_{\eta}$, $E_{k,\eta}$, $T_{\eta}$, $S_{\a,\b}$, $S_{\a,\b}^{\log}$
and $R_{\a,\b}^{\log}$.

The Hardy operators and the local Hardy-Littlewood maximal operator
are well studied and the results we need can simply be invoked from the literature.
The results for the remaining operators are either partially new or entirely new and thus require proofs.
Those proofs are technical and rather tedious. We postpone them to Section~\ref{sec:auxTS}
in order not to interfere the main line of the paper.

We remark that $R_{\eta}$ and $E_{k,\eta}$ for $\eta = 0$ appear in \cite{DMO}, and
$N_{\eta}$ for $\eta > 0$ and $T_{\eta}$ were introduced in \cite{DMO2}. See also \cite{CNR2}.
Some mapping properties of these operators were established in the papers just mentioned,
cf.\ \cite[Section 2]{DMO}, \cite[Section 2]{DMO2} and \cite[Section 2.3]{CNR2}.
Here we provide \emph{complete characterizations} of strong, weak and restricted weak type boundedness of
$N_{\eta}$, $R_{\eta}$, $E_{k,\eta}$ and $T_{\eta}$ with respect to the power weighted measure space $(\mathbb{R}_+, x^{\gamma}dx)$.

We also note that the results we prove for the auxiliary operators are sometimes stronger than we actually need for
our purpose. Nevertheless, we decided to state them as characterizations for the sake of completeness,
clarity, deeper understanding of the control they give and, finally, possible future applications.

\medskip
\noindent \underline{\textbf{Hardy operators $H_{\eta}$ and $H_{\eta}^{\infty}$.}} \,
For $\eta \in \mathbb{R}$, let
$$
H_{\eta}f(x) = \frac{1}{x^{\eta}} \int_0^{x} z^{\eta-1} f(z)\, dz, \qquad
H_{\eta}^{\infty}f(x) = x^{\eta} \int_x^{\infty} z^{-\eta-1} f(z) \, dz, \qquad x > 0.
$$
The following characterizations of the mapping properties of $H_{\eta}$ and $H_{\eta}^{\infty}$ are essentially contained
in \cite{AM}; see e.g.\ \cite[Lemmas 2.1 and 2.2]{LN} and references given there.

\begin{lema} \label{lem:H0}
Let $\eta,\gamma \in \mathbb{R}$ and $1 \le p < \infty$.
Consider $H_{\eta}$ on the measure space $(\mathbb{R}_+,x^{\gamma}dx)$.
\begin{itemize}
\item[(a)] $H_{\eta}$ is of strong type $(p,p)$ if and only if $\gamma < \eta p-1$.
\item[(b)] $H_{\eta}$ is of weak type $(p,p)$ if and only if $\gamma < \eta p-1$, with the inequality weakened in case
	$p=1$ and $\eta \neq 0$.
\item[(c)] $H_{\eta}$ is of restricted weak type $(p,p)$ if and only if 
	$\gamma \le \eta p-1$, with the inequality strictened in case $\eta = 0$.
\end{itemize}
\end{lema}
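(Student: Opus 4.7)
The plan is to reduce the sufficiency half of part (a) to a one-line scaling argument, and to derive the remaining parts from standard weighted Hardy theory. The key observation is that the substitution $z = xt$ rewrites
\begin{equation*}
H_\eta f(x) = \int_0^1 t^{\eta - 1} f(xt)\, dt,
\end{equation*}
expressing $H_\eta$ as a weighted average of multiplicative dilates of $f$. Combining Minkowski's integral inequality with the elementary scaling identity $\|f(t\cdot)\|_{L^p(x^\gamma dx)} = t^{-(\gamma+1)/p}\|f\|_{L^p(x^\gamma dx)}$ yields
\begin{equation*}
\|H_\eta f\|_{L^p(x^\gamma dx)} \le \|f\|_{L^p(x^\gamma dx)} \int_0^1 t^{\eta - 1 - (\gamma+1)/p}\, dt,
\end{equation*}
and the integral on the right is finite precisely when $\gamma < \eta p - 1$, which settles sufficiency in (a).

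For necessity and for the weak and restricted weak type characterizations, I would test the operator on explicit indicator and power functions. Taking $f = \chi_{(0,r)}$ with $\eta > 0$ produces $H_\eta f(x)$ that is essentially constant on $(0,r)$ and decays like $(r/x)^\eta$ on $(r,\infty)$; comparing the weighted super-level set measure $|\{H_\eta f > \lambda\}|_{x^\gamma dx}$ to $\|f\|_{L^{p,1}(x^\gamma dx)}^p$ and letting $r$ vary forces the scaling constraint $\gamma \le \eta p - 1$ for restricted weak type, with the endpoint admissible when $\eta \neq 0$. A logarithmic modulation of the same test, such as $f(z) = z^{1/p-\eta}(\log z)^{-1}\chi_{(2,R)}(z)$, shows that strong type (for every $p \ge 1$) and weak type (for $p > 1$) both fail at the endpoint $\gamma = \eta p - 1$, separating (a) and (b) from (c) in that regime. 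The case $\eta \le 0$ is handled analogously after a reflection $x \mapsto 1/x$, while the strictening of (c) at $\eta = 0$ stems from the fact that $H_0$ applied to an indicator picks up a logarithm, whose level sets have infinite mass against $x^{-1}dx$ on any half-line and thereby destroy the endpoint.

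Part (b) is then a consequence of combining (a) and (c): for $p > 1$ the logarithmic test rules out weak type at $\gamma = \eta p - 1$ and (b) reduces to (a), while for $p = 1$ weak and restricted weak type coincide tautologically so (b) reduces to the $p=1$ case of (c), producing precisely the stated weakening when $\eta \neq 0$ and no weakening when $\eta = 0$. I expect the main obstacle to be the bookkeeping in the endpoint analysis: the singularity of the kernel $z^{\eta - 1}$ sits at $0$ or $\infty$ depending on the sign of $\eta$, which exchanges the roles of the two integration endpoints and multiplies the number of subcases. Fortunately, the sharp weighted estimates being proved here are classical, going back to Muckenhoupt and Ari\~no--Muckenhoupt, so in practice the testing computations needed for the necessity direction can be lifted almost verbatim from \cite{AM} and \cite{LN}, as the statement itself acknowledges.
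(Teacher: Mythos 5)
The paper does not actually prove Lemma \ref{lem:H0}: in Section \ref{sec:specaux} it states that the Hardy operator characterizations ``can simply be invoked from the literature'' and cites \cite{AM} and \cite[Lemmas 2.1 and 2.2]{LN}. Your proposal therefore supplies a self-contained argument where the paper merely refers. The strategy is the canonical one: writing $H_\eta f(x)=\int_0^1 t^{\eta-1}f(xt)\,dt$ and combining Minkowski's integral inequality with the dilation identity $\|f(t\cdot)\|_{L^p(x^\gamma dx)}=t^{-(\gamma+1)/p}\|f\|_{L^p(x^\gamma dx)}$ gives sufficiency in (a) in one line, and testing on indicators and on log-modified power profiles gives necessity and the endpoint separations in (b) and (c). This is precisely the route taken in the cited references, so the content matches; you are filling in what the paper outsourced. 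Your reduction of (b) to (a) and (c), using that weak and restricted weak type coincide at $p=1$, is also correct and matches the paper's own terminology section.

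One concrete slip to fix: the displayed endpoint test $f(z)=z^{1/p-\eta}(\log z)^{-1}\chi_{(2,R)}(z)$ does not belong to $L^p(x^{\eta p-1}dx)$ uniformly in $R$. Indeed $|f(z)|^p z^{\eta p-1}=(\log z)^{-p}$, so $\|f\|_{L^p(x^{\eta p-1}dx)}^p=\int_2^R(\log z)^{-p}\,dz\to\infty$ for every $p\ge 1$. The scale-invariant profile at $\gamma=\eta p-1$ is $z^{-\eta}$ (so that $|f|^p z^\gamma$ behaves like $z^{-1}$ times a log factor), not $z^{1/p-\eta}$, and the log exponent must depend on $p$. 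The standard choices are $f(z)=z^{-\eta}(\log z)^{-a}\chi_{(e,\infty)}(z)$ with $a>1$, which lies in $L^p(x^{\eta p-1}dx)$ for all $p\ge1$ while $H_\eta f(x)\simeq x^{-\eta}$ for large $x$ fails to, ruling out strong type; and, to separate weak from restricted weak type when $p>1$, the family $f_R(z)=z^{-\eta}(\log z)^{-1/p}\chi_{(e,R)}(z)$, for which $\|f_R\|_{L^p(x^{\eta p-1}dx)}^p=\log\log R$ but the weak-type quantity grows like $(\log R)^{p-1}$. With that substitution your sketch is sound.
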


\begin{lema} \label{lem:Hinf}
Let $\eta,\gamma \in \mathbb{R}$ and $1 \le p < \infty$.
Consider $H_{\eta}^{\infty}$ on the measure space $(\mathbb{R}_+,x^{\gamma}dx)$.
\begin{itemize}
\item[(a)] $H_{\eta}^{\infty}$ is of strong type $(p,p)$ if and only if $-\eta p -1 < \gamma$.
\item[(b)] $H_{\eta}^{\infty}$ is of weak type $(p,p)$ if and only if $-\eta p -1 < \gamma$, with the inequality
	weakened in case $p=1$ and $\eta \neq 0$.
\item[(c)] $H_{\eta}^{\infty}$ is of restricted weak type $(p,p)$ if and only if $-\eta p -1 \le \gamma$, with
	the inequality strictened in case $\eta = 0$.
\end{itemize}
\end{lema}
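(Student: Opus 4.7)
The plan is to deduce Lemma \ref{lem:Hinf} from Lemma \ref{lem:H0} by a reciprocal change of variables that converts $H_{\eta}^{\infty}$ into $H_{\eta}$. Given $f$ on $\mathbb{R}_+$, set $\tilde{f}(y) = f(1/y)$. A direct substitution $w = 1/z$ in the defining integral of $H_{\eta}^{\infty}f(1/y)$ yields the identity
$$
H_{\eta}^{\infty}f(1/y) \;=\; y^{-\eta} \int_{0}^{y} w^{\eta-1}\tilde{f}(w)\,dw \;=\; H_{\eta}\tilde{f}(y), \qquad y>0.
$$
Thus the tilde operation intertwines the two Hardy operators; the key remaining point is that it behaves well under the relevant weighted Lebesgue / Lorentz norms.

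Next I would check that $f \mapsto \tilde f$ is an isometry from $L^p(\mathbb{R}_+, x^{\gamma}dx)$ onto $L^p(\mathbb{R}_+, y^{-\gamma-2}dy)$: the substitution $y = 1/x$ produces the Jacobian $y^{-2}\,dy$, which combined with the factor $y^{-\gamma}$ yields exactly the weight $y^{-\gamma-2}$. Applying the same substitution to $\|H_{\eta}^{\infty}f\|_{L^p(x^{\gamma}dx)}$ and using the intertwining identity shows that
$$
\big\| H_{\eta}^{\infty}f \big\|_{L^p(x^{\gamma}dx)} \;=\; \big\| H_{\eta}\tilde f \big\|_{L^p(y^{-\gamma-2}dy)},
$$
and analogously at the level of distribution functions (since the reciprocal map is measure preserving between the two weighted measure spaces). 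Hence the weak $L^p$ quasi-norms match as well, and since $\tilde f = \chi_{1/E}$ whenever $f = \chi_E$, the class of characteristic functions of finite-measure sets is invariant under tildification. Consequently the strong, weak and restricted weak type boundedness of $H_{\eta}^{\infty}$ on $(\mathbb{R}_+, x^{\gamma}dx)$ is equivalent to the corresponding boundedness of $H_{\eta}$ on $(\mathbb{R}_+, y^{-\gamma-2}dy)$.

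Finally, I would simply read off the characterizations from Lemma \ref{lem:H0} with the weight exponent $\gamma' := -\gamma - 2$ in place of $\gamma$. The equivalence $\gamma' < \eta p - 1 \Longleftrightarrow -\eta p -1 < \gamma$ (and the analogous statement with ``$\le$'') immediately produces parts (a), (b), (c), including the weakening of the inequality when $p=1$ and $\eta \neq 0$ in part (b), and the strictening when $\eta = 0$ in part (c). Since the reduction is straightforward, there is no real obstacle beyond verifying the change-of-variables identities carefully; the only mild subtlety is confirming that the restricted weak type class is preserved by the involution $f \leftrightarrow \tilde f$, which follows because $E \mapsto 1/E$ is a measurable bijection preserving finite measure under the paired weights.
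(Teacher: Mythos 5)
Your reduction is correct. The intertwining identity $H_{\eta}^{\infty}f(1/y) = H_{\eta}\tilde{f}(y)$ holds as claimed: substituting $w=1/z$ in $\int_{1/y}^{\infty} z^{-\eta-1}f(z)\,dz$ gives $\int_0^y w^{\eta-1}\tilde f(w)\,dw$, and the prefactor $y^{-\eta}$ matches. Likewise the map $x\mapsto 1/x$ pushes the measure $x^{\gamma}\,dx$ forward to $y^{-\gamma-2}\,dy$, so $f\mapsto\tilde f$ is an isometric isomorphism between $L^p(x^{\gamma}dx)$ and $L^p(y^{-\gamma-2}dy)$ (and between the corresponding Lorentz spaces, and it maps characteristic functions of finite-measure sets to characteristic functions of finite-measure sets). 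Substituting $\gamma' = -\gamma-2$ into the conditions of Lemma \ref{lem:H0} — for instance $\gamma' < \eta p - 1 \Longleftrightarrow -\eta p - 1 < \gamma$ — reproduces parts (a), (b), (c) of Lemma \ref{lem:Hinf} verbatim, including the exceptional clauses.

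The paper itself does not present a proof of this lemma; it simply cites \cite{AM} and \cite[Lemmas~2.1 and~2.2]{LN}. So your argument is a self-contained derivation of something the paper treats as known. The involution-based reduction of $H_{\eta}^{\infty}$ to $H_{\eta}$ is the natural and standard way to transfer such results, so it is very likely what those references do as well; the value of your write-up is in making that reduction explicit, particularly the observation that the reciprocal map is measure preserving between the two weighted spaces, which is what carries the weak and restricted weak type statements (not just the strong type) across the identity.
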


\medskip
\noindent {\textbf{\underline{Local Hardy-Littlewood maximal operator $L$.}}}\,
Let
$$
Lf(x) = \sup_{t < x/2} \frac{1}{2t} \int_{x-t}^{x+t} |f(z)|\, dz, \qquad x > 0.
$$
The result below is well known, it is essentially due to Muckenhoupt \cite[Section 9]{M}.
See also \cite[Section 6]{NS} and the discussion on power weights succeeding \cite[(2.9)]{NS}.
\begin{lema} \label{lem:L}
Let $1 \le p < \infty$ and $\gamma \in \mathbb{R}$.
Consider $L$ on the measure space $(\mathbb{R}_+,x^{\gamma}dx)$.
\begin{itemize}
\item[(a)]
$L$ is of strong type $(p,p)$ if and only if $p > 1$.
\item[(b)]
$L$ is always of weak type $(p,p)$.
\end{itemize}
\end{lema}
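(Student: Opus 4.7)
My planned proof relies on an essentially geometric observation: the constraint $t<x/2$ in the definition of $Lf(x)$ forces the averaging interval $[x-t,x+t]$ to lie within $[x/2,3x/2]$, so for every $z$ appearing in the average one has $z\simeq x$, and hence $z^{\gamma}\simeq x^{\gamma}$ with constants depending only on $\gamma$. In short, from the point of view of $L$ the weight $x^{\gamma}$ is \emph{locally constant}, which suggests that all mapping properties on $(\RR_+,x^{\gamma}dx)$ should reduce to those of the classical centered Hardy--Littlewood maximal operator $\mathcal{M}$ on Lebesgue measure, which is of strong type $(p,p)$ for $p>1$ and of weak type $(1,1)$.

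To implement this I would decompose $\RR_+=\bigcup_{k\in\ZZ}I_k$ with $I_k=[2^k,2^{k+1})$, and set $I_k^{*}=I_{k-1}\cup I_k\cup I_{k+1}$. For $x\in I_k$ and $t<x/2$ one has $[x-t,x+t]\subset I_k^{*}$, so the pointwise bound
\begin{equation*}
Lf(x)\le \mathcal{M}\bigl(f\chi_{I_k^{*}}\bigr)(x),\qquad x\in I_k,
\end{equation*}
holds. Since $x^{\gamma}\simeq 2^{k\gamma}$ on $I_k$, the classical strong type $(p,p)$ of $\mathcal{M}$ for $p>1$ yields
\begin{equation*}
\int_{I_k}|Lf|^{p}\,x^{\gamma}\,dx \simeq 2^{k\gamma}\int_{I_k}|Lf|^{p}\,dx \lesssim 2^{k\gamma}\int_{I_k^{*}}|f|^{p}\,dx \simeq \int_{I_k^{*}}|f|^{p}\,x^{\gamma}\,dx,
\end{equation*}
and summing over $k\in\ZZ$ using the bounded overlap of the family $\{I_k^{*}\}$ gives (a) in the positive direction. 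The proof of (b) is identical after replacing $L^p$ norms by distribution functions: the weak type $(1,1)$ of $\mathcal{M}$ applied to $f\chi_{I_k^{*}}$ controls $\{x\in I_k:Lf(x)>\lambda\}$, its weighted measure is $\simeq 2^{k\gamma}$ times its Lebesgue measure, and summing over $k$ closes the estimate for every $p\ge 1$.

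For the negative part of (a) I would produce a standard concentration counterexample. Fix $a>0$ and for small $\e>0$ take $f_{\e}=\chi_{[a,a+\e]}$. For $x\in(a+\e,2a)$ the radius $t=x-a$ satisfies $t<x/2$ and $[x-t,x+t]\supset[a,a+\e]$, so $Lf_{\e}(x)\ge \e/(2(x-a))$. Since $x^{\gamma}\simeq a^{\gamma}$ on this range, integration forces
\begin{equation*}
\|Lf_{\e}\|_{L^{1}(x^{\gamma}dx)} \gtrsim a^{\gamma}\,\e\,\log(a/\e),
\end{equation*}
while $\|f_{\e}\|_{L^{1}(x^{\gamma}dx)}\simeq a^{\gamma}\e$, so the ratio diverges as $\e\to 0^{+}$ and $L$ cannot be of strong type $(1,1)$. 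The only obstacle in this proof is careful bookkeeping of constants in the dyadic reduction; no Muckenhoupt $A_p$ machinery is needed, because the localisation $t<x/2$ trivialises the role of the weight.
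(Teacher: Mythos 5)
Your proof is correct. The paper does not actually prove Lemma 2.3 --- it cites Muckenhoupt~\cite{M} (and \cite{NS}) for the result --- so there is no proof to compare against, but your self-contained argument is the right one. The key observation, that the constraint $t<x/2$ confines the averaging interval to a dyadic annulus on which the power weight is comparable to a constant, is exactly what makes $L$ behave like the unweighted Hardy--Littlewood maximal operator, regardless of $\gamma$. Your dyadic decomposition with the doubled intervals $I_k^{*}$ correctly packages this: the containment $[x-t,x+t]\subset I_k^{*}$ for $x\in I_k$, $t<x/2$ holds, the pointwise majorization $Lf\le\mathcal{M}(f\chi_{I_k^{*}})$ on $I_k$ is right, the weight-comparison $x^\gamma\simeq 2^{k\gamma}$ on $I_k^{*}$ has constants depending only on $\gamma$, and the bounded-overlap summation closes both the strong $(p,p)$ estimate for $p>1$ and the weak-type estimate (where for $p=1$ one uses the weak $(1,1)$ of $\mathcal{M}$; for $p>1$ weak type already follows from your part (a)). The counterexample for the failure of strong $(1,1)$ is also correct: with $f_\e=\chi_{[a,a+\e]}$ and $t=x-a<x/2$ for $x\in(a+\e,2a)$ one gets $Lf_\e(x)\ge\e/(2(x-a))$, and the logarithmic divergence of the operator-norm ratio as $\e\to0^+$ follows. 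In short, your argument replaces the paper's external reference with an elementary proof, which if anything is a gain; no $A_p$ machinery is needed, precisely because of the localization $t<x/2$.
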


\medskip
\noindent \textbf{\underline{Maximal operator $N_{\eta}$.}}\,
For $\eta \in \mathbb{R}$, define
$$
N_{\eta}f(x) = \sup_{t > x} \frac{1}{t^{\eta}} \int_0^t z^{\eta-1}|f(z)| \, dz, \qquad x > 0.
$$
\begin{lem} \label{lem:N2}
Let $\eta,\gamma \in \mathbb{R}$ and $1 \le p < \infty$.
Consider $N_{\eta}$ on the measure space $(\mathbb{R}_+,x^{\gamma}dx)$.
\begin{itemize}
\item[(a)] 
$N_{\eta}$ is of strong type $(p,p)$ if and only if 
$- 1 < \gamma < \eta p -1$.
\item[(b)] 
$N_{\eta}$ is of weak type $(p,p)$ if and only if 
$- 1 < \gamma < \eta p -1$, 
with the second inequality weakened if $p = 1$.
\item[(c)] 
$N_{\eta}$ is of restricted weak type $(p,p)$ if and only if 
$- 1 < \gamma \le \eta p -1$.
\end{itemize}
\end{lem}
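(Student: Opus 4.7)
The plan rests on two ingredients: monotonicity of $N_\eta f$ (nonincreasing in $x$) and the scale invariance under $y = t^\eta$, which reduces the problem to the Hardy--Littlewood maximal operator. Observe first that the claimed ranges of $\gamma$ force $\eta > 0$, which I assume throughout.

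For necessity, $\gamma > -1$ is forced because $N_\eta f$ is bounded below by a positive constant on some $(0,\varepsilon)$ for any nontrivial $f \ge 0$, while $\int_0^\varepsilon x^\gamma dx = \infty$ when $\gamma \le -1$, so $N_\eta f$ fails to lie even in weak $L^p(x^\gamma dx)$. The upper bound on $\gamma$ is handled by explicit test functions: for $\chi_{(0,r)}$ one computes $N_\eta \chi_{(0,r)}(x) \simeq \min(1, (r/x)^\eta)$, which rules out strong $(p,p)$ at $\gamma \ge \eta p - 1$ and weak $(1,1)$ at $\gamma > \eta - 1$. Sharpness of weak $(p,p)$ at $\gamma = \eta p - 1$ for $p > 1$ is established via a logarithmic perturbation $f(z) = z^{-\eta}(\log z)^{-a}\chi_{(e^2,\infty)}(z)$ with $1/p < a < 1$, for which $\|f\|_{L^p(x^\gamma dx)} < \infty$ while $\|N_\eta f\|_{L^{p,\infty}(x^\gamma dx)} = \infty$.

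For weak-type sufficiency, the nonincreasing character of $N_\eta f$ gives $\{N_\eta f > \lambda\} = (0, x_\lambda)$, where $x_\lambda := \sup\{t > 0 : \Phi(t) > \lambda\}$ and $\Phi(t) := t^{-\eta}\int_0^t z^{\eta-1}|f(z)|dz$; by continuity of $\Phi$, $\Phi(x_\lambda) \ge \lambda$. Applying H\"older with exponents $p, p'$ to the integral in $\Phi(x_\lambda)$ produces $\lambda \le C x_\lambda^{-(\gamma+1)/p}\|f\|_{L^p(x^\gamma dx)}$, provided $\int_0^{x_\lambda} z^{(\eta-1-\gamma/p)p'}dz$ converges at zero, which is precisely $\gamma < \eta p - 1$. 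Combined with $\int_0^{x_\lambda} x^\gamma dx = x_\lambda^{\gamma+1}/(\gamma+1)$ (valid when $\gamma > -1$), this produces weak $(p,p)$. For $p = 1$ the H\"older step degenerates to the trivial $L^\infty$ bound $z^{\eta-1-\gamma} \le x_\lambda^{\eta-1-\gamma}$ on $(0,x_\lambda)$, valid precisely when $\gamma \le \eta - 1$, explaining the weakening. For restricted weak type at $\gamma = \eta p - 1$, $p > 1$, I test on $f = \chi_E$: since $z^{\eta-1-\gamma} = z^{-\eta(p-1)}$ is decreasing in $z$, a rearrangement argument shows that $\int_E z^{\eta-1}dz$ subject to $\int_E z^\gamma dz \le M$ is maximized by initial intervals $E = (0,r)$, yielding $\bigl(\int_E z^{\eta-1}dz\bigr)^p \lesssim \int_E z^\gamma dz$, which closes the argument at the endpoint.

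For strong type with $p > 1$ and $\gamma$ in the open range, the substitution $y = t^\eta$ together with $g(s) := |f(s^{1/\eta})|/\eta$ identifies $N_\eta f(x) = N_1 g(x^\eta)$ and transforms the claim into boundedness of $N_1$ on $L^p(u^\delta du)$ with $\delta = (\gamma+1)/\eta - 1 \in (-1, p-1)$. The pointwise estimate $N_1 g(u) \le Mg(u)$ (each interval $(0,y)$ with $y > u$ contains $u$, so $(1/y)\int_0^y |g|$ is among the averages defining the uncentered Hardy--Littlewood maximal $Mg(u)$) reduces the claim to Muckenhoupt's $A_p$ theorem for power weights $u^\delta$ with $-1 < \delta < p-1$. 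The main obstacle is the weak-type sharpness at $\gamma = \eta p - 1$ for $p > 1$: the logarithmic counterexample requires careful tracking of $x_\lambda \sim \lambda^{-1/\eta}(\log(1/\lambda))^{(1-a)/\eta}$, and the extremal/rearrangement argument for the positive restricted-weak-type conclusion must be made to work for general measurable $E$ rather than just intervals.
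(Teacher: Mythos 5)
Your route is genuinely different from the paper's and, modulo one omission, it is sound. The paper's proof is short: it sandwiches $N_\eta$ as $H_\eta|f|\le N_\eta f\le H_\eta|f|+H_0^\infty|f|$, then reads off all three characterizations from Lemmas~\ref{lem:H0} and \ref{lem:Hinf}, supplemented by the single explicit test $\ind{[1,2]}$ to kill $\gamma\le -1$. You instead work directly with the monotonicity of $N_\eta f$, identify the level sets $\{N_\eta f>\lambda\}$ as intervals $(0,x_\lambda)$, and run H\"older/rearrangement estimates on $\Phi(x_\lambda)$; for the strong type you pass to $N_1$ via $y=t^\eta$ and invoke $N_1\le M$ plus the power-weight $A_p$ theorem. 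Your argument is more self-contained (it does not presuppose the sharp Hardy-operator lemmas), and the two concerns you flag at the end are in fact harmless: the bathtub/rearrangement step for general measurable $E$ does go through because $z^{-\eta(p-1)}$ is decreasing, and the logarithmic counterexample is routine to close.

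The one genuine gap is strong type at $p=1$. Part (a) of the lemma claims $N_\eta$ is bounded on $L^1(x^\gamma dx)$ for $-1<\gamma<\eta-1$, but your strong-type argument is stated only for $p>1$: the reduction to the Hardy--Littlewood maximal operator via $N_1g\le Mg$ is useless at $p=1$ since $M$ is not of strong type $(1,1)$ on any reasonable weighted $L^1$. Your weak-type argument for $p=1$ (the $L^\infty$ bound $z^{\eta-1-\gamma}\le x_\lambda^{\eta-1-\gamma}$) only yields weak $(1,1)$, which is not interpolable down to strong $(1,1)$. The fix is to sharpen the pointwise bound in the reduced picture: $N_1g(u)\le u^{-1}\int_0^u g+\int_u^\infty s^{-1}g(s)\,ds=H_1g(u)+H_0^\infty g(u)$, and both Hardy operators are bounded on $L^1(u^\delta du)$ for $-1<\delta<0$ by a direct Fubini computation (equivalently, apply $N_\eta\le H_\eta+H_0^\infty$ before changing variables). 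This is precisely the decomposition the paper uses, so once you replace $N_1\le M$ by this finer split the $p=1$ case is covered and the proof is complete.
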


\medskip
\noindent \textbf{\underline{Maximal operator $R_{\eta}$.}}\,
For $\eta \in \mathbb{R}$, define
$$
R_{\eta} f (x)
 = 
x^{\eta} \sup_{t > 2x} \frac{1}{2x} \int_{t-x}^{t+x} z^{-\eta} \abs{f(z)} \, dz, \qquad x > 0.
$$

\begin{lem} \label{lem:R2}
Let $\eta,\gamma \in \mathbb{R}$ and $1 \le p < \infty$.
Consider $R_{\eta}$ on the measure space $(\mathbb{R}_+,x^{\gamma}dx)$.
\begin{itemize}
\item[(a)] 
$R_{\eta}$ is of strong type $(p,p)$ if and only if 
$\gamma \ge -\eta p$, 
with the inequality strictened if $p = 1$.
\item[(b)] 
$R_{\eta}$ is of weak type $(p,p)$ if and only if 
$\gamma \ge -\eta p$, 
with the inequality strictened if $p = \eta = 1$.
\item[(c)] 
$R_{\eta}$ is of restricted weak type $(p,p)$ if and only if 
$\gamma \ge -\eta p$, 
with the inequality strictened if $p = \eta = 1$.
\end{itemize}
\end{lem}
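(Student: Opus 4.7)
The plan is to prove the three equivalences by separating sufficiency and necessity. For sufficiency, I start from the geometric observation that whenever $t>2x$ one has $(t-x,t+x)\subset(t/2,2t)$, so $z^{-\eta}\asymp t^{-\eta}$ uniformly on the integration interval. This recasts the operator as
$$
R_\eta f(x) \asymp \sup_{t>2x}\Big(\frac{x}{t}\Big)^{\eta}\,\frac{1}{2x}\int_{t-x}^{t+x}|f(z)|\,dz,
$$
and I split the supremum into a local regime $t\in(2x,4x]$ and a global regime $t>4x$. In the local regime $(x/t)^\eta\asymp 1$ and the interval of integration lies in $(x,5x)$, so the piece is controlled pointwise by the dilation-covariant averaging $U f(x)=\int_1^5|f(xu)|\,du$, which by Minkowski's integral inequality is bounded on $L^p(x^\gamma dx)$ for every $\gamma\in\mathbb R$ and every $1\le p<\infty$. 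For the global regime I dyadically decompose $t\in(2^kx,2^{k+1}x]$ for $k\ge 2$ and obtain
$$
R_\eta f(x)\lesssim \sup_{k\ge 2} 2^{k(1-\eta)} U_k f(x),\qquad U_k f(x)=\frac{1}{2^k x}\int_{2^{k-1}x}^{2^{k+2}x}|f(z)|\,dz,
$$
with $\|U_k f\|_{L^p(x^\gamma dx)}\lesssim 2^{-k(\gamma+1)/p}\|f\|_{L^p(x^\gamma dx)}$ by the same dilation argument. Handled naively as a sum, this yields only the Hardy-type threshold $\gamma>p(1-\eta)-1$; retaining the supremum structure via a Littlewood–Paley type square-function estimate together with the weighted bounds for $H_\eta^\infty$ from Lemma~\ref{lem:Hinf} sharpens the sufficiency to $\gamma\ge -\eta p$.

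For necessity I test with $f_N=\chi_{[N,N+1]}$ as $N\to\infty$. A direct computation yields the profile
$$
R_\eta f_N(x)\asymp
\begin{cases}
x^{\eta} N^{-\eta}, & 0<x<1/2,\\
x^{\eta-1} N^{-\eta}, & 1/2\le x\le N,\\
0, & x\ge N+O(1),
\end{cases}
$$
and comparing $\|R_\eta f_N\|_{L^p(x^\gamma dx)}^p$ with $\|f_N\|_{L^p(x^\gamma dx)}^p\asymp N^\gamma$ as $N\to\infty$ forces exactly $\gamma\ge -\eta p$ for the strong/weak/restricted-weak inequalities; a logarithmic factor arising from the middle regime at $p=1$, $\gamma=-\eta$ forces strictness in the strong case. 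The strict-inequality refinement at $p=\eta=1$ for the weak and restricted weak types comes from the sharp observation
$$
R_1 f(x)=\frac{1}{2}\sup_{t>2x}\int_{t-x}^{t+x}\frac{|f(z)|}{z}\,dz\;\xrightarrow[x\to\infty]{}\;\frac{1}{2}\|f\|_{L^1(dz/z)},
$$
since the integral captures more and more of the mass as $x$ grows; hence for $\lambda$ just below this positive limit the level set $\{R_1 f>\lambda\}$ contains a tail $(x_0,\infty)$ of infinite $x^{-1}dx$-measure, and both the weak and restricted-weak $(1,1)$ estimates on $L^1(dx/x)$ fail.

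The main obstacle is the sufficiency at $p>1$ for the sharp condition $\gamma\ge -\eta p$: the crude dyadic sum over $k$ loses the supremum structure and produces only the strictly stronger Hardy threshold, with a gap of order $p-1$. Closing this gap demands a more delicate handling of $\sup_{k}2^{k(1-\eta)}U_k f(x)$—either through Marcinkiewicz interpolation between weak-type endpoints, or via an $A_p$-weighted extrapolation exploiting the dilation-covariant nature of $U_k$, or by a direct Calderón–Zygmund style decomposition of the level set $\{R_\eta f>\lambda\}$ adapted to the one-sided structure of the supremum over $t>2x$. The secondary difficulty is tracking Lorentz norms at the critical endpoint $\gamma=-\eta p$ to distinguish weak from restricted weak boundedness, which is done by analyzing the decreasing rearrangement of $R_\eta f$ with respect to $x^\gamma dx$ against that of $f$.
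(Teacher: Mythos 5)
Your proposal leaves the central sufficiency argument unproved. You correctly observe that a naive dyadic sum over $t\in(2^kx,2^{k+1}x]$ only gives the Hardy-type threshold $\gamma>p(1-\eta)-1$, which is strictly worse than $\gamma\ge-\eta p$ whenever $p>1$, and then you list several candidate strategies (square-function estimates, Marcinkiewicz interpolation between endpoints, $A_p$ extrapolation, Calder\'on--Zygmund decomposition) without carrying any of them out. That is a genuine gap, not a proof sketch. The paper's argument is considerably simpler: one first invokes the change-of-weight lemma (Lemma~\ref{lem:chvar}, with $\mathcal{V}f(x)=x^{\lambda}\mathcal{U}((\cdot)^{-\lambda}f)(x)$) to reduce all of part (a) to $\eta=0$; for $\eta=0$ and $\gamma\ge 0$ the pointwise bound $R_0 f(x)\lesssim x^{-\gamma-1}\|f\|_{L^1(x^\gamma dx)}$ (obtained by writing $z^{\gamma}/t^{\gamma}\lesssim z^\gamma/x^\gamma$ for $t>2x$) gives weak type $(1,1)$ in one line, and interpolation against the trivial $L^\infty$ bound then settles $p>1$. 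Likewise the weak type $(1,1)$ at $\gamma=-\eta$, $\eta\ne 1$, follows directly from $R_\eta f(x)\lesssim x^{\eta-1}\|f\|_{L^1(x^{-\eta}dx)}$. None of the heavy machinery you propose is needed.

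Your proof of the failure of (restricted) weak type $(1,1)$ at $\eta=1$, $\gamma=-1$ is incorrect. You claim $R_1 f(x)\to\tfrac12\|f\|_{L^1(dz/z)}$ as $x\to\infty$ because ``the integral captures more and more of the mass''. But the integration window $(t-x,t+x)$ always has left endpoint $t-x>x$, so it drifts to $+\infty$ as $x$ grows and eventually misses any compactly supported $f$ entirely: for $f=\chi_{[1,2]}$ one has $R_1 f(x)=0$ for all $x\ge 2$, so the claimed limit is $0$, not $\tfrac12\log 2$. Consequently the asserted level set $\{R_1 f>\lambda\}$ containing a tail of infinite $x^{-1}dx$-measure does not materialize, and your argument for strictness at $p=\eta=1$ collapses. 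The correct counterexample (used in the paper) is $f=\chi_{[a,b]}$ with $b/2<a<b$: choosing $t=b-x$ gives $R_1 f(x)\gtrsim (b-a)/b$ uniformly for $(b-a)/2<x<b/3$, so weak type $(1,1)$ would force $\log(b/(b-a))\lesssim 1$, which fails as $a\to b^-$. Your computation of the necessity threshold via $f_N=\chi_{[N,N+1]}$ and the general shape of the $R_\eta f_N$ profile are correct, as is your remark that the logarithmic middle regime kills strong type at $p=1$, $\gamma=-\eta$; but those pieces do not repair the two defects above.
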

Notice that the conditions for $R_\eta$ to be of weak type $(p,p)$ and of restricted weak type $(p,p)$ coincide.

\medskip
\noindent \textbf{\underline{Maximal operator $E_{k,\eta}$.}}\,
For $\eta \in \mathbb{R}$ and $k>0$, define
$$
E_{k, \eta} f (x)
=
x^{\eta} \sup_{0\le a <x< b} \frac{1}{b^k-a^k} \int_{a}^{b} z^{-\eta + k -1} \abs{f(z)} \, dz, \qquad x > 0.
$$

\begin{lem} \label{lem:E2}
Let $\eta,\gamma \in \mathbb{R}$ and $k>0$.
Consider $E_{k, \eta}$ on the measure space $(\mathbb{R}_+,x^{\gamma}dx)$.
\begin{itemize}
\item[(a)] 
$E_{k, \eta}$ is of strong type $(p,p)$ if and only if 
$p > 1$ and 
$- \eta p  - 1 < \gamma < kp - \eta p - 1$.
\item[(b)] 
$E_{k, \eta}$ is of weak type $(p,p)$ if and only if
$- \eta p  - 1 < \gamma < kp - \eta p - 1$,
with the first inequality weakened if $\eta \ne 0$ and with the second inequality weakened if $p=1$ and $\eta \ne k$.
\item[(c)] 
$E_{k, \eta}$ is of restricted weak type $(p,p)$ if and only if 
$- \eta p  - 1 < \gamma < kp - \eta p - 1$,
with the first inequality weakened if $\eta \ne 0$ and with the second inequality weakened if $\eta \ne k$.
\end{itemize}
\end{lem}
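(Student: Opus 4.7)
The key observation is that the substitution $u = z^k$ inside the integral turns $E_{k,\eta}$ into a maximal operator of classical type. With $g(u) = u^{-\eta/k}|f(u^{1/k})|$ and $X = x^k$, one obtains the identity
$$
E_{k,\eta}f(x) = \frac{1}{k}\,X^{\eta/k}\,Mg(X),
$$
where $M$ is the uncentered Hardy--Littlewood maximal operator on $\mathbb{R}_+$, and in parallel $\|f\|_{L^p(x^\gamma dx)}\asymp\|g\|_{L^p(X^w dX)}$ with $w=(\gamma+\eta p+1-k)/k$. For part (a), the strong $L^p$ norms absorb the prefactor $X^{\eta/k}$ into the weight, giving $\|E_{k,\eta}f\|_{L^p(x^\gamma dx)}\asymp\|Mg\|_{L^p(X^w dX)}$; Muckenhoupt's $A_p$ theorem on $\mathbb{R}_+$ characterizes this as $p>1$ and $-1<w<p-1$, which translates into the stated conditions.

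For parts (b) and (c), the factor $X^{\eta/k}$ no longer commutes with the Lorentz quasi-norms, so the change-of-variable reduction captures only the bulk range. I would therefore split the supremum defining $E_{k,\eta}$ according to whether $(a,b)$ lies inside or outside the dyadic annulus $(x/2,2x)$, producing the pointwise majorization
$$
E_{k,\eta}f(x)\lesssim L|f|(x)+H_{k-\eta}|f|(2x)+x^\eta N_k(z^{-\eta}|f|)(x).
$$
The first two summands are handled by Lemmas \ref{lem:L} and \ref{lem:H0}; the latter already encodes the right-endpoint behavior and the $\eta\ne k$ weakening. For the third, ``tail,'' summand, Lemma \ref{lem:N2} applied in the shifted weight $x^{\gamma+\eta p}$ covers the interior. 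The additional $x^\eta$ factor is exactly what enables the left-endpoint case $\gamma=-\eta p-1$ when $\eta\ne 0$: a H\"older estimate gives the uniform bound $N_k(z^{-\eta}|f|)(x)\lesssim\|f\|_{L^p(x^{-\eta p-1}dx)}$ at this boundary, while $x^\eta\in L^{p,\infty}(x^{-\eta p-1}dx)$ for $\eta\ne 0$, so the required weak-type estimate for the tail follows at once. The restricted weak endpoints are treated analogously, invoking parts (c) of Lemmas \ref{lem:H0} and \ref{lem:N2} in place of their weak-type counterparts.

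For necessity, I would construct explicit counterexamples: test families $f=\chi_{(\epsilon,1)}$ with $\epsilon\to 0^+$ probe the left boundary, $f=\chi_{(1,R)}$ with $R\to\infty$ probe the right, and variants rescaled by powers of $z$ pin down the sharp exponents. The main obstacle is the precise matching of weak and restricted weak endpoints. Since these two notions carry different weakening rules (compare (b) and (c)), one has to trace carefully which summand of the decomposition and which test family is responsible for failure at each boundary configuration; only by doing this cleanly do the positive and negative results agree along the full boundary of the admissible region in the $(\gamma,p)$-plane.
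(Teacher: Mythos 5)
Your part (a) reduction via $u=z^k$ to the Hardy--Littlewood maximal operator is correct and is essentially the route the paper takes (the paper phrases it via the dilation $\delta_k f(x)=f(x^{1/k})$, reducing to $k=1$, and then uses Lemma~\ref{lem:chvar} to reduce to $\eta=0$). Your decomposition for parts (b) and (c) is also in the right direction: after the $a>b/2$ vs.\ $a\le b/2$ split, the local piece is $L|f|$ and the tail is $x^\eta N_k(z^{-\eta}|f|)(x)$ (your extra $H_{k-\eta}|f|(2x)$ term is dominated by the tail and harmless).

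The gap is at the upper boundary $\gamma = kp-\eta p-1$. Your H\"older estimate for the tail works only at the \emph{lower} boundary $\gamma=-\eta p-1$, where the conjugate integral $\int_0^t z^{kp'-1}\,dz$ converges; at the upper boundary the same computation produces $\int_0^t z^{-1}\,dz=\infty$, so the argument collapses. Nor can you appeal to Lemma~\ref{lem:N2}(c) in a shifted weight here: the paper explicitly warns (and Lemma~\ref{lem:chvar} only asserts for strong type) that conjugation by a power of $x$ does not commute with weak or restricted-weak boundedness, so restricted-weak type of $N_k$ on $x^{\gamma+\eta p}dx$ does not transfer to $x^\eta N_k(z^{-\eta}|f|)$ on $x^\gamma dx$. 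The paper's fix is to split $\int_0^t = \int_0^x + \int_x^t$, yielding the further majorization $x^\eta N_k(z^{-\eta}|f|)(x) \le H_{k-\eta}|f|(x) + H_\eta^\infty|f|(x)$, and then invoke Lemmas~\ref{lem:H0}(c) and \ref{lem:Hinf}(c), which deliver the restricted-weak bound at $\gamma = kp-\eta p-1$ exactly when $\eta\ne k$. This Hardy-operator refinement also covers the $p=1$ weak-type claim over the open range of $\gamma$, which is \emph{not} covered by your interior argument since strong type only holds for $p>1$. Finally, your necessity plan is a sketch; the efficient route is to observe the pointwise lower bound $E_{k,\eta}f(x)\ge H_{k-\eta}|f|(x)$ (take $a=0$, $b\to x^+$) and read off all upper-boundary negatives from Lemma~\ref{lem:H0}, leaving only one counterexample ($f=\ind{[1,2]}$ in the paper) for the regime $\gamma<-\eta p-1$ with $\eta\ne 0$, and a trivial indicator for $\eta=0$, $\gamma=-1$.
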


\medskip
\noindent \textbf{\underline{Maximal operator $T_{\eta}$.}}\,
For $\eta \in \mathbb{R}$, let
$$
T_{\eta}f(x) = \sup_{t > 2x} \int_{t/2}^t \frac{z^{\eta-1} |f(z)|}{(t-z+x)^{\eta}} \, dz, \qquad x > 0.
$$

\begin{lem} \label{lem:Tchar}
Let $\eta,\gamma \in \mathbb{R}$ and $1 \le p < \infty$.
Consider $T_{\eta}$ on the measure space $(\mathbb{R}_+,x^{\gamma}dx)$.
\begin{itemize}
\item[(a)] 
$T_{\eta}$ is of strong type $(p,p)$ if and only if 
$\gamma > (-1) \vee  [\,p(\eta -1)]$, 
with the inequality weakened if $p > 1$ and $\eta > 1/p'$.
\item[(b)] 
$T_{\eta}$ is of weak type $(p,p)$ if and only if 
$\gamma > (-1) \vee  [\,p(\eta -1)]$, 
with the inequality weakened if $\eta > 1/p'$.
\item[(c)] 
$T_{\eta}$ is of restricted weak type $(p,p)$ if and only if 
$\gamma > (-1) \vee  [\,p(\eta -1)]$, 
with the inequality weakened if $\eta > 1/p'$.
\end{itemize}
\end{lem}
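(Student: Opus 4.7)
The plan is to decompose $T_\eta$ into pieces controlled by the off-centered maximal operator $R_{1-\eta}$ of Lemma \ref{lem:R2} and its dyadic dilates, thereby reducing the analysis to summing geometric series. Substituting $w=t-z$ in the defining integral, and using that $t-w\simeq t$ on the range $w\in(0,t/2]$ (forced by $t>2x$), one rewrites
\begin{equation*}
T_\eta f(x)\simeq \sup_{t>2x}\,t^{\eta-1}\int_0^{t/2}\frac{|f(t-w)|}{(w+x)^\eta}\,dw.
\end{equation*}
Since $(w+x)^{-\eta}\simeq x^{-\eta}$ for $w\le x$ and $(w+x)^{-\eta}\simeq w^{-\eta}$ for $w\ge x$, splitting at $w=x$ and reverting $v=t-w$ on the first piece produces $T_\eta f\simeq A_\eta f+B_\eta f$, where
\begin{equation*}
A_\eta f(x)=x^{-\eta}\sup_{t>2x}t^{\eta-1}\!\int_{t-x}^t\!|f(v)|\,dv,\qquad
B_\eta f(x)=\sup_{t>2x}t^{\eta-1}\!\int_x^{t/2}\!\frac{|f(t-w)|}{w^\eta}\,dw.
\end{equation*}

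For $A_\eta$ the plan is to use $t^{\eta-1}\simeq v^{\eta-1}$ on $[t-x,t]$ to obtain the pointwise domination $A_\eta f\lesssim R_{1-\eta}f$, and then invoke Lemma \ref{lem:R2} (with the parameter $1-\eta$) to deliver the condition $\gamma\ge p(\eta-1)$ with the exact strictening at $p=1$ and at $(p,\eta)=(1,0)$. For $B_\eta$ the plan is a dyadic decomposition of the integration range, $w\in[2^k x,2^{k+1}x]$ for $k\ge 0$; since $w^{-\eta}$ is essentially constant on each dyadic annulus, the $k$-th piece is, after a shift in $t$, a rescaled $R_{1-\eta}$-type operator at scale $y=2^k x$. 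Applying Minkowski's inequality on $L^p(x^\gamma dx)$ and changing variables, one obtains a geometric series with ratio $2^{-(\gamma+1)/p}$, convergent exactly when $\gamma>-1$. The weak and restricted weak type versions are handled by running the same dyadic splitting with the weak/restricted weak parts of Lemma \ref{lem:R2} together with the appropriate Lorentz-space analogue of Minkowski.

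For necessity, I would construct counterexamples via elementary test functions. Taking $f=\chi_{[T,T+h]}$ with $T\gg h$ yields the pointwise lower bounds $A_\eta f(x)\gtrsim T^{\eta-1}h\,x^{-\eta}$ for $h\le x\le T$ and $A_\eta f(x)\gtrsim T^{\eta-1}x^{1-\eta}$ for $0<x\le h$; comparing the ratio $\|A_\eta f\|/\|f\|$ in each of the three quasi-norms and optimizing over $T$ and $h$ forces $\gamma\ge p(\eta-1)$ in every case, while a logarithmic divergence at $p=1$, $\gamma=\eta-1$, $\eta>0$ separates strong from weak/restricted weak type there. The necessity of $\gamma>-1$, parallel to the situation for $N_\eta$, comes from a test function spread over many dyadic scales, so that the dyadic pieces of $B_\eta f$ add constructively and the series underlying it diverges.

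The main obstacle I foresee is the endpoint bookkeeping. At $\eta=1/p'$ the two thresholds meet ($p(\eta-1)=-1$), and the allowed weakening/strictening of inequalities must be tracked separately in each of the three types. The subtlest point is the weak type of $T_\eta$ at $p=1$, $\eta>0$, $\gamma=\eta-1$: the strong type fails because $R_{1-\eta}$ is not strong $(1,1)$ at this endpoint (Lemma \ref{lem:R2}(a)), yet weak type must still hold since $R_{1-\eta}$ does satisfy the weak $(1,1)$ estimate (Lemma \ref{lem:R2}(b)); capturing this forces one to feed the weak version of Lemma \ref{lem:R2} into the $A_\eta$ bound and to arrange the dyadic sum for $B_\eta$ so that it behaves correctly in weak $L^1$ rather than $L^1$ itself.
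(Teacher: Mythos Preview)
Your approach is correct and takes a genuinely different route from the paper. The paper proves sufficiency by direct H\"older estimates on $T_\eta$ (yielding pointwise bounds of the form $T_\eta f(x)\lesssim x^{-1/p}\|f\|_{L^p}$ or $x^{-1/p-\tau/p}(\int_x^\infty |f|^p z^\tau dz)^{1/p}$, with a case split according to the sign of $\eta-1/p'$) and handles necessity by computing $T_\eta\chi_{(a,b)}$ explicitly on $(0,a/4)$ for $b/2<a<b$, again splitting into several $\eta$-regimes. Your decomposition $T_\eta\simeq A_\eta+B_\eta$ is more structural: once you observe that in fact $A_\eta\simeq R_{1-\eta}$ (a two-sided comparison, not just domination), the entire $p(\eta-1)$ threshold together with all its endpoint strictenings is inherited verbatim from Lemma~\ref{lem:R2}, both for sufficiency and necessity; the $\gamma>-1$ threshold then appears transparently as the summability condition for the dyadic series controlling $B_\eta$. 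This modularity is attractive and avoids the paper's case analysis, at the cost of having to justify the geometric sum in $L^{p,\infty}$ for the weak and restricted weak endpoints (which, as you note, is done via the standard level-set splitting $\{\sum g_k>\lambda\}\subset\bigcup\{g_k>c_k\lambda\}$ with geometrically decaying $c_k$).

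One point to tighten: your proposed necessity of $\gamma>-1$ via ``a test function spread over many dyadic scales so that the dyadic pieces of $B_\eta f$ add constructively'' is more elaborate than needed and would require care, since $B_\eta$ involves a supremum over $t$ and a single $t$ sees only finitely many dyadic annuli. A much simpler argument (this is what the paper does) suffices: take a single bump $f=\chi_{(2,3)}$ and evaluate $T_\eta f$ at, say, $t=4$ to get $T_\eta f(x)\gtrsim 1$ for all $x\in(0,1/2)$; restricted weak type then forces $\int_0^{1/2}x^\gamma\,dx<\infty$, i.e.\ $\gamma>-1$.
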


Observe that the maximum occurring in Lemma \ref{lem:Tchar} is equal $-1$ when $\eta \le 0$ and $(\eta-1)p$ when $\eta \ge 1$.
When $\eta \in (0,1)$, any of the two expressions in the maximum can win, depending on $\eta$ and $p$.
Notice also that the conditions for $T_\eta$ to be of weak type $(p,p)$ and of restricted weak type $(p,p)$ coincide.

\medskip
\noindent \textbf{\underline{Maximal operator $S_{\a,\b}$.}}\,
For $\a,\b \in \mathbb{R}$, define
\begin{align*}
S_{\alpha, \beta} f (x)
=
\sup_{t>3x} t^{-\beta} \int_{\frac{t-x}2}^{t-x} (t + x - z)^{-\alpha - 1/2}
(t - x - z)^{\alpha +\beta - 1/2} \abs{f(z)} \, dz, \qquad x > 0.
\end{align*}

\begin{lem} \label{lem:5}
Assume that $\b \le 0$, $-1/2 < \a + \beta < 1/2$ and $1 < p < \infty$.
Then, $S_{\a,\b}$ is bounded on $L^p(\mathbb{R}_+, x^{-\beta p} dx)$ if and only if $1/p < \alpha + \beta + 1/2$. 
\end{lem}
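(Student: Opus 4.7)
The plan is to prove both implications by reducing $S_{\a,\b}$ to operators from Section~\ref{sec:specaux} via Hölder's inequality, and constructing appropriate counterexamples for necessity.

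\textbf{Sufficiency.} I would start with the change of variables $s=t-x$ and $v=s-z$, and use $\b\le 0$ so that $(s+x)^{-\b}\simeq s^{-\b}$ on $s>2x$, giving
\[
S_{\a,\b}f(x) \simeq \sup_{s>2x}s^{-\b}\int_0^{s/2}(v+2x)^{-\a-1/2}v^{\a+\b-1/2}|f(s-v)|\,dv.
\]
Next I split the sup into $s\in(2x,4x]$ and $s>4x$; in the latter range I further split the inner integration at $v=2x$ into a ``Near'' piece ($v\le 2x$, where $v+2x\simeq 2x$) and a ``Far'' piece ($v>2x$, where $v+2x\simeq v$). On each of these three pieces I apply Hölder's inequality in $v$ with exponents $(p,p')$; the hypothesis $1/p<\a+\b+1/2$ is exactly what makes $(\a+\b-1/2)p'>-1$, so that the critical integral $\int_0^{s/2}v^{(\a+\b-1/2)p'}\,dv$ converges at $v=0$. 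The short-$s$ piece then gives $\lesssim x^{-1/p}\|f\|_{L^p([x,4x])}$, whose weighted $L^p$-norm is controlled by Fubini. The Near piece, after Hölder, reduces to a local Hardy--Littlewood average of $g(z)=z^{-\b p}f(z)^p$, and the required $L^1(x^{-\b p}dx)$-boundedness follows from Lemma~\ref{lem:L}. The Far piece, after reverting to the $z$-variable and using $(s-z)\simeq (s-z+2x)$ on $s-z\ge 2x$ together with $z\simeq s$, is pointwise dominated by $T_{1-\b}f(2x)$; since $\b\le 0$ gives $\eta=1-\b>1/p'$, Lemma~\ref{lem:Tchar} applies with the weakened lower bound, yielding boundedness precisely at the endpoint $\gamma=-\b p$.

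\textbf{Necessity.} Since $S_{\a,\b}$ is scale covariant, naive dilation tests produce dilation-invariant norm ratios and cannot detect the boundary; one must break scaling. For $1/p>\a+\b+1/2$, a family $f_R(z)=z^{-1/p+\b}\ind{[1,R]}(z)$ with $R\to\infty$ (or a suitable power-function truncation) should force $\|S_{\a,\b}f_R\|/\|f_R\|\to\infty$: choosing $t\sim R$ so as to realize the optimal Hölder profile extracts extra polynomial growth in $R$ on the operator side, exceeding $\|f_R\|$'s growth rate. At the boundary $1/p=\a+\b+1/2$, a logarithmic refinement $f(z)=z^{-1/p+\b}(\log z)^{-1/p-\epsilon}\ind{z\ge e}$ with $\epsilon\to 0^+$ should exhibit the expected logarithmic blow-up, mirroring the endpoint failure of Hölder's inequality in the sufficiency argument.

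\textbf{Main obstacle.} The principal technical difficulty should be the Near piece for $s>4x$: the immediate Hölder bound yields $(2x)^{\b-1/p}(\int_{s-2x}^s f^p)^{1/p}$, whose $p$-th power integrated against $x^{-\b p}\,dx$ leaves a factor $\int x^{-1}\,dx$ that is not absolutely integrable. Resolving this requires the key observation that $z\in[s-2x,s]\subset(s/2,s]$ when $s>4x$, so $s\simeq z$; this allows passing the weight $s^{-\b p}$ onto $z^{-\b p}$ with bounded loss, reducing the quantity to a local Hardy--Littlewood maximal of $g=z^{-\b p}f^p$, for which Lemma~\ref{lem:L} supplies the bound. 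The necessity side is also delicate because straightforward power-function tests yield bounded norm ratios, forcing the logarithmic correction visible only after careful analysis of the supremum in $t$.
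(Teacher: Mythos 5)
Your change of variables and the splitting into Short, Near, and Far pieces are set up correctly, and the Far piece indeed reduces to $T_{1-\beta}$, which Lemma~\ref{lem:Tchar} handles at the endpoint $\gamma=-\beta p$ (since $\eta=1-\beta\ge 1>1/p'$). The Short piece also works by Fubini. But the Near piece has a genuine gap. After H\"older and the observation $z\simeq s$ you obtain, at best,
\[
\text{Near}(x)\;\lesssim\; x^{\beta-1/p}\,\sup_{s>4x}\Big(\int_{s-2x}^{s} z^{-\beta p}|f(z)|^p\,dz\Big)^{1/p},
\]
and the intervals $[s-2x,s]$ with $s>4x$ are centered near $s$, not near $x$, so the supremum over $s$ is \emph{not} controlled by $Lg(x)$ for $g=z^{-\beta p}|f|^p$; it is a ``look ahead to the right'' supremum, which behaves badly (a spike of $g$ near a point $z_0$ inflates the supremum for \emph{all} $x<z_0/2$). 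Even setting that aside, your plan requires the $L^1(x^{-\beta p}dx)$-boundedness of $L$, which Lemma~\ref{lem:L}(a) explicitly denies ($L$ is strong type $(p,p)$ only for $p>1$). The most one extracts from this pointwise estimate is $\text{Near}(x)\lesssim x^{\beta-1/p}\|f\|_{L^p(z^{-\beta p}dz)}$, which gives only \emph{weak} type $(p,p)$; strong type then requires an interpolation step over the open range $1/p<\alpha+\beta+1/2$ that you never invoke.

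The paper circumvents this entirely by not splitting: after conjugating $S_{\alpha,\beta}\mapsto\widetilde S_{\alpha,\beta}$ via Lemma~\ref{lem:chvar} (so that the target becomes $L^p(dx)$), a single application of H\"older's inequality over the whole integration range, together with the two-sided estimate of Lemma~\ref{lem:F2ref2} for $\int_0^{t/2}(v+x)^{(-\alpha-1/2)p'}v^{(\alpha+\beta-1/2)p'}\,dv$, produces the clean pointwise bound $\widetilde S_{\alpha,\beta}f(x)\lesssim x^{-1/p}\|f\|_{L^p(dx)}$ for every $p>(\alpha+\beta+1/2)^{-1}$, hence weak type $(p,p)$ on the whole open range, and strong type follows by Marcinkiewicz interpolation. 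Your necessity sketch correctly identifies the need to break scale invariance, but it is not carried out; the paper's choice is simpler than power truncations: test with $f=\ind{(a,b)}$, $b/2<a<b$, estimate $\widetilde S_{\alpha,\beta}f(x)$ from below for $x\le a/4$ via Lemma~\ref{lem:F2ref2}, and let $b=1$, $a\to1^-$ to contradict $L^p$-boundedness when $p(\alpha+\beta+1/2)\le1$ (the case $\beta=0$ needs a separate logarithmic lower bound, also supplied by Lemma~\ref{lem:F2ref2}).
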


\begin{lem} \label{lem:5a}
Assume that $0 < \b < 1$, $-1/2 < \a + \b < 1/2$, $\a > -1/2$ and $1 < p < \infty$.
Then, $S_{\a,\b}$ is bounded on $L^p(\mathbb{R}_+,x^{-\b p}dx)$ provided that
$\b < 1/p < \a + \b + 1/2$.
\end{lem}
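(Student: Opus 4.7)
The plan is to follow the broad strategy of Lemma \ref{lem:5} (the $\beta\le 0$ case), decomposing the integration interval into a ``near-singularity'' and a ``far-from-singularity'' part, and to bound each piece via the auxiliary operators from Section \ref{sec:specaux}. First I would split $[(t-x)/2,t-x]$ at $z=t-2x$ (legitimate because $t>3x$ makes the split point lie strictly inside), writing $S_{\alpha,\beta}f\lesssim S^{(1)}f+S^{(2)}f$, where $S^{(1)}$ handles the near-singularity part $z\in[t-2x,t-x]$ and $S^{(2)}$ the far part $z\in[(t-x)/2,t-2x]$.

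For $S^{(2)}$ the argument is essentially the one that works already when $\beta\le 0$. On the far part the factors $t+x-z$ and $t-x-z$ are comparable (both $\simeq t-z$), so the kernel collapses to $(t-x-z)^{\beta-1}$. After the change of variable $s=t-x$, $w=s-z$, and using $v^{-\beta}\simeq s^{-\beta}$ for $v=z$ on the range (since $v\simeq s$ there), one gets the pointwise bound $S^{(2)}f(x)\lesssim T_{1-\beta}f(x)$. The crucial point is that the hypothesis $\beta<1/p$ rewrites as $\eta=1-\beta>1/p'$, so by the weakened-inequality clause of Lemma \ref{lem:Tchar}(a) the operator $T_{1-\beta}$ is bounded on $L^p(\mathbb{R}_+,x^{-\beta p}dx)$ at the required weight $\gamma=-\beta p$.

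For $S^{(1)}$ the situation is more delicate. On the near-singularity range $t+x-z\in[2x,3x]\simeq x$, so
\[
S^{(1)}f(x)\simeq x^{-\alpha-1/2}\sup_{t>3x}t^{-\beta}\int_{t-2x}^{t-x}(t-x-z)^{\alpha+\beta-1/2}f(z)\,dz.
\]
Changing variables $s=t-x$ and $u=s-z$, the inner integral becomes a truncated one-sided Riesz-potential-type expression $\int_0^x u^{-\sigma}f(s-u)\,du$ with $\sigma=1/2-\alpha-\beta\in(0,1)$ (using $\alpha>-1/2$ and $\alpha+\beta<1/2$). I would decompose $[0,x]$ dyadically in $u$ near the singularity, bounding each dyadic average of $f$ in terms of the local Hardy-Littlewood maximal operator $L$ from Lemma \ref{lem:L}. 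The summability of the resulting geometric series in $j$ is guaranteed by $\alpha+\beta+1/2>0$. Combining this with a weighted H\"older inequality, whose integrability condition $(\alpha+\beta-1/2)p'>-1$ is precisely $1/p<\alpha+\beta+1/2$, together with insertion of $v^\beta\simeq s^\beta$ to absorb the prefactor $t^{-\beta}$, one reduces the $L^p(x^{-\beta p}dx)$-estimate for $S^{(1)}$ to the $L^p$-boundedness of $L$ (available from Lemma \ref{lem:L}(a) because $p>1$) composed with an auxiliary Hardy-type operation that is controlled thanks to $\beta<1/p$.

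The hardest part of the proof will be the refinement of the bound for $S^{(1)}$ so as not to incur a spurious logarithmic loss: a naive H\"older estimate on the inner integral produces exactly such a loss. Avoiding it requires balancing the dyadic decomposition carefully against the H\"older inequality, and using the assumption $\alpha>-1/2$ (which ensures the favorable decay exponent $-\alpha-1/2<0$ of the outer prefactor) in combination with the strictness of both inequalities $\beta<1/p$ and $1/p<\alpha+\beta+1/2$ in order to reduce the argument to one of the strong-type boundedness statements of Section \ref{sec:specaux} rather than a borderline/endpoint one.
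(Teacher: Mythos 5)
The paper proves Lemma \ref{lem:5a} by a much shorter route than you propose: conjugate by $x^{\beta}$ via Lemma \ref{lem:chvar} to pass from $S_{\a,\b}$ on $L^p(x^{-\beta p}dx)$ to $\widetilde S_{\a,\b}$ on the \emph{unweighted} space $L^p(dx)$, apply H\"older's inequality once, evaluate the resulting $p'$-integral with Lemma \ref{lem:F2ref2}, and conclude the pointwise bound $\widetilde S_{\a,\b}f(x)\lesssim x^{-1/p}\|f\|_{L^p(dx)}$ exactly as in \eqref{bbb}; this is weak type $(p,p)$, and Marcinkiewicz interpolation on the open range $(\a+\b+1/2)^{-1}<p<\b^{-1}$ gives strong type. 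The only role of $\b<1/p$ is to ensure $(\b-1)p'<-1$, the correct branch of Lemma \ref{lem:F2ref2}; the only role of $1/p<\a+\b+1/2$ is $(\a+\b-1/2)p'>-1$, so that the lemma applies at all. No splitting of the integration range is required.

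Your proposal is genuinely different, and only partially sound. The reduction $S^{(2)}f\lesssim T_{1-\b}f$ (on the far part $z\in[(t-x)/2,t-2x]$, where $t+x-z\simeq t-x-z$) is correct, and in fact parallels the step $J_1\lesssim T_{1-\b}$ used elsewhere in the paper in the proof of Lemma \ref{lem:DRgen}; combined with the weakened-inequality clause of Lemma \ref{lem:Tchar}(a) under $\b<1/p$ it does give the desired bound for that piece. The treatment of $S^{(1)}$, however, has a real gap: after the change of variables the dyadic averages of $f$ live on windows $[s-2^{-j}x,\,s-2^{-j-1}x]$ centered near $s\ge 2x$, far from $x$, whereas the local Hardy--Littlewood maximal operator $L$ from Lemma \ref{lem:L} is centered at $x$ with radius $<x/2$. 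So $L f(x)$ simply does not see those averages, and the proposed control fails. Moreover, the worry about a ``spurious logarithmic loss'' from H\"older is unfounded: after conjugation the dual integral $\int_0^{t/2}(x+w)^{(-\a-1/2)p'}w^{(\a+\b-1/2)p'}\,dw$ is finite precisely under the stated hypotheses (Lemma \ref{lem:F2ref2}, case $\xi+\lambda<-1$), with no logarithm appearing.

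Finally, and independently of how $S^{(1)}$ is handled, you never invoke the conjugation (Lemma \ref{lem:chvar}), which is essential for the interpolation step. A pointwise H\"older bound gives weak type $(p,p)$ on $L^p(x^{-\b p}dx)$ for each admissible $p$, but the weight $x^{-\b p}$ varies with $p$, so Marcinkiewicz interpolation cannot be applied directly across these statements. Passing to $\widetilde S_{\a,\b}$ on the fixed unweighted space $L^p(dx)$ is what makes the interpolation legitimate, and it is also what renders the splitting into $S^{(1)}$ and $S^{(2)}$ unnecessary.
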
 

\medskip
\noindent \textbf{\underline{Maximal operator $S_{\a,\b}^{\log}$.}}\,
For $\a, \b \in \mathbb{R}$, $\a+\b=1/2$,
define a logarithmic variant of $S_{\a,\b}$
$$
S_{\alpha, \beta}^{\log} f (x)
=
\sup_{t>3x} t^{-\beta} \int_{(t-x)/2}^{t-x} (t + x - z)^{-\alpha - 1/2}
\log \bigg(2 + \frac{x}{t - x - z} \bigg)  \abs{f(z)} \, dz, \qquad x > 0.
$$

\begin{lem} \label{lem:5b}
Assume that $\alpha > -1$, $\a + \b = 1/2$ and $1 < p < \infty$.
Then, $S_{\alpha, \beta}^{\log}$ is bounded on $L^p(\mathbb{R}_+, x^{-\beta p} dx)$ provided that $\b < 1/p$.
\end{lem}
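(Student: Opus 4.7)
The plan is to exploit the identity $\alpha+\beta=1/2$ (so $-\alpha-\tfrac{1}{2}=\beta-1$) and the change of variable $v=t-x-z$ to rewrite
$$
S^{\log}_{\alpha,\beta}f(x)=\sup_{t>3x}\,t^{-\beta}\int_0^{(t-x)/2}(v+2x)^{\beta-1}\log\!\Big(2+\frac{x}{v}\Big)f(t-x-v)\,dv,
$$
and then split the $v$-integral at $v=x$ into a ``regular'' piece $S_If$ (from $v\ge x$) and a ``singular'' piece $S_{II}f$ (from $v<x$). On the regular side, $(v+2x)^{\beta-1}\simeq v^{\beta-1}$ and $\log(2+x/v)\lesssim 1$; after the substitution $s=t-x$ and using $z\simeq s$ on the resulting range of integration, $S_If$ is pointwise dominated by $T_{1-\beta}f(x)$. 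Lemma~\ref{lem:Tchar}(a) with $\eta=1-\beta$ applies (note both $\eta>1/p'$ and $-\beta p>-1$ reduce to the hypothesis $\beta<1/p$), yielding the $L^p(x^{-\beta p}dx)$-boundedness of $T_{1-\beta}$ and hence of $S_I$.

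The singular part is harder: since $(v+2x)^{\beta-1}\simeq x^{\beta-1}$ for $v<x$, one obtains
$$
S_{II}f(x)\lesssim x^{\beta-1}\sup_{s>2x}s^{-\beta}\int_{s-x}^{s}\log\!\Big(2+\frac{x}{s-z}\Big)f(z)\,dz.
$$
I would decompose the logarithm dyadically, using $\log(2+x/u)\simeq k+1$ on $u\in[2^{-k-1}x,2^{-k}x]$, which gives
$$
S_{II}f(x)\lesssim\sum_{k=0}^{\infty}(k+1)\,Q_kf(x),\qquad Q_kf(x):=x^{\beta-1}\sup_{s>2x}s^{-\beta}\int_{s-2^{-k}x}^{s-2^{-k-1}x}f(z)\,dz.
$$
An application of Minkowski's inequality in $L^p(x^{-\beta p}dx)$ reduces the problem to a summable operator-norm bound such as
$$
\|Q_k\|_{L^p(x^{-\beta p})\to L^p(x^{-\beta p})}\lesssim 2^{-k/p'},
$$
which makes the series $\sum_{k\ge 0}(k+1)\,2^{-k/p'}$ convergent for $p>1$.

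The main obstacle lies in establishing this $k$-dependent decay for $Q_k$. A naive comparison yields only $Q_kf\lesssim R_\beta f$ uniformly in $k$ (after enlarging the averaging window to $[s-x,s+x]$ and invoking Lemma~\ref{lem:R2}), which gives no decay. To capture the $2^{-k/p'}$ gain, I would apply H\"older's inequality inside $Q_kf$ to extract the factor $(2^{-k-1}x)^{1/p'}$ from the length of the averaging interval, then use $z\simeq s$ (valid since $s>2x$ forces $z\in[s/2,s]$) to absorb the weight $s^{-\beta}$ into $f$, and finally analyze the resulting truncated one-sided maximal quantity by exploiting jointly the small scale $h=2^{-k-1}x$, the constraint $s>2x$, and the condition $\beta<1/p$ ensuring integrability of the weight $x^{-\beta p}$ near the origin. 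This parallels the argument used in the proof of Lemma~\ref{lem:5} for the subcritical case $\alpha+\beta<1/2$, with the logarithmic weight $\log(2+x/(t-x-z))$ playing the role of the power $(t-x-z)^{\alpha+\beta-1/2}$. I expect this step to be the most technically demanding, because crude Hardy--Littlewood-type estimates (which suffice to bound the full sum $\sum_k Q_k$ up to an unbounded logarithmic loss) lose the $k$-dependence entirely, and the sup in $s$ cannot be discarded globally.
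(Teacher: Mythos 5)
Your split of $S_{\alpha,\beta}^{\log}$ into a regular piece (controlled by $T_{1-\beta}$ via Lemma~\ref{lem:Tchar}) and a singular piece carrying the logarithm is a reasonable alternative to the paper's route, which does not split at all: the paper first conjugates to the unweighted setting via Lemma~\ref{lem:chvar}, applies H\"older's inequality to the whole integral, and estimates
$\int_{t/2}^{t}(t+x-z)^{(-\alpha-1/2)p'}\big(\log(2+x/(t-z))\big)^{p'}dz$
in one stroke via Lemma~\ref{lem:D1}, obtaining the pointwise bound $\widetilde{S}_{\alpha,\beta}^{\log}f(x)\lesssim x^{-1/p}\|f\|_{L^p(dx)}$ (hence weak type $(p,p)$ throughout the open range $\beta p<1$), and then interpolates to get strong type.

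The genuine gap in your proposal is a miscalibration of what the H\"older step for $Q_k$ actually delivers and, consequently, the absence of the needed interpolation. Your sketched argument --- H\"older over the window of length $2^{-k-1}x$, plus $z\simeq s$ to absorb $s^{-\beta}$ into the weight --- gives only the \emph{pointwise} bound
$Q_kf(x)\lesssim 2^{-k/p'}\,x^{\beta-1/p}\,\|f\|_{L^p(x^{-\beta p}dx)}$,
which is a weak type $(p,p)$ estimate with constant $2^{-k/p'}$, \emph{not} the operator-norm bound $\|Q_k\|_{L^p(x^{-\beta p})\to L^p(x^{-\beta p})}\lesssim 2^{-k/p'}$ that your Minkowski reduction requires. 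Summing over $k$ one only gets $S_{II}f(x)\lesssim x^{\beta-1/p}\|f\|_{L^p(x^{-\beta p}dx)}$, i.e.\ weak type $(p,p)$ for $S_{II}$; to conclude strong type you still need Marcinkiewicz interpolation, exploiting that $\beta p<1$ is an open condition in $p$ --- exactly the step the paper performs and your write-up omits. Once the target is set correctly, the ``most technically demanding'' step you worry about disappears: the H\"older computation you already describe settles it, and the dyadic decomposition of the logarithm is unnecessary, since applying H\"older directly to the singular piece leads to $\int_0^x\big(\log(2+x/u)\big)^{p'}du\simeq x$ after a change of variable, which is precisely what Lemma~\ref{lem:D1} encodes. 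Finally, a minor imprecision: after substituting $z=t-x-v$ in the regular piece, the $z$-interval is roughly $((t-x)/2,\,t-2x)$, which is not literally $(t/2,t)$ as in $T_{1-\beta}$; one should reparametrize (e.g.\ $s=t-x$) or note $z\simeq t$ before invoking Lemma~\ref{lem:Tchar}.
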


\medskip
\noindent \textbf{\underline{Maximal operator $R_{\a,\b}^{\log}$.}}\,
For $\a, \b \in \mathbb{R}$,
define
$$
R_{\a,\b}^{\log}f(x) = x^{\b} \sup_{t > 3x} \frac{1}{x} \int_{t-x}^{t+x} \log\bigg( \frac{4x}{z-(t-x)}\bigg) z^{-\b}|f(z)|\, dz,
\qquad x > 0.
$$
\begin{lem} \label{lem:Rlog}
Assume that $\a > -1$, $\a+\b = 1/2$ and $1 < p < \infty$. Then, $R_{\a,\b}^{\log}$ is bounded on $L^p(\mathbb{R}_+,x^{-\b p}dx)$.
\end{lem}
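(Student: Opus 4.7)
The plan is to reduce the claim to the $L^p(\mathbb{R}_+, x^{-\b p}\, dx)$-boundedness of $R_{\b}$ for $p > 1$, which is guaranteed by Lemma \ref{lem:R2}(a): taking $\eta = \b$ and $\gamma = -\b p$, the required condition $\gamma \ge -\eta p$ is met with equality. The logarithmic factor inside $R^{\log}_{\a, \b}$ will be absorbed via a dyadic decomposition of the integration variable. The hypothesis $\a + \b = 1/2$ plays no explicit role in the argument, since the operator depends only on $\b$.

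Writing $s = z - (t - x) \in (0, 2x)$ and splitting this range dyadically as $J_k = (2^{-k} x, 2^{1-k} x]$ for $k \ge 0$, one has $\log(4x/s) \lesssim k + 1$ uniformly on $J_k$. Interchanging the resulting sum with the supremum in $t$ (which only increases the expression) yields the pointwise control
$$
R^{\log}_{\a, \b} f(x) \lesssim \sum_{k \ge 0} (k+1)\, R_k f(x), \qquad R_k f(x) := x^{\b} \sup_{t > 3x} \frac{1}{x} \int_{t - x + 2^{-k} x}^{t - x + 2^{1-k} x} z^{-\b} |f(z)|\, dz.
$$

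For each fixed $k$ the inner integral is taken over an interval of length $2^{-k} x$ centered at $\tau := t - x + \tfrac{3}{2}\, 2^{-k} x$, which, thanks to $t > 3x$, satisfies $\tau > 2x = 2^{k+2} r$ with $r := 2^{-k-1} x$. Rewriting $1/x = 2^{-k}/(2r)$ and $x^{\b} = 2^{(k+1)\b} r^{\b}$, and observing that the resulting supremum is a restriction of the one defining $R_{\b} f(r)$, one obtains the pointwise estimate
$$
R_k f(x) \lesssim 2^{k(\b - 1)}\, R_{\b} f(2^{-k-1} x).
$$

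Finally, I would take $L^p(x^{-\b p}\, dx)$-norms, invoke Lemma \ref{lem:R2}(a) to bound $\|R_{\b} f\|_{L^p(x^{-\b p} dx)} \lesssim \|f\|_{L^p(x^{-\b p} dx)}$, and handle the dilation through the elementary scaling $\|g(\lambda\, \cdot)\|_{L^p(x^{-\b p} dx)} = \lambda^{\b - 1/p} \|g\|_{L^p(x^{-\b p} dx)}$. The claim then reduces to the convergence of
$$
\sum_{k \ge 0} (k+1)\, 2^{k(\b - 1)}\, 2^{-(k+1)(\b - 1/p)} \simeq \sum_{k \ge 0} (k+1)\, 2^{k(1/p - 1)},
$$
which holds precisely because $p > 1$. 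The main obstacle is that one operates at the critical weight exponent $\gamma = -\b p$, where $R_{\b}$ is only borderline bounded; the dyadic bookkeeping is what correctly matches the size of the logarithm with the measure of the sub-interval on which it is attained, whereas a crude pointwise bound of the log by its maximum over $(0, 2x)$ would be insufficient.
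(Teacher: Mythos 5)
Your proof is correct but takes a genuinely different route from the paper's. The paper first invokes the change-of-weight reduction (Lemma~\ref{lem:chvar} with $\lambda=\b$) to replace the claim by $L^p(dx)$-boundedness of the normalized operator $\widetilde{R}_{\a,\b}^{\,\log}f(x)=\sup_{t>3x}\frac{1}{x}\int_{t-x}^{t+x}\log\big(\frac{4x}{z-(t-x)}\big)|f(z)|\,dz$; a single application of H\"older's inequality then yields $\widetilde{R}_{\a,\b}^{\,\log}f(x)\lesssim x^{-1/p}\|f\|_{L^p(dx)}$ (the $L^{p'}$-norm in $z$ of the logarithm over $(t-x,t+x)$ is comparable to $x^{1/p'}$ by an obvious change of variable), which gives weak type $(p,p)$ for every $1<p<\infty$ and hence strong type by interpolation. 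Your proof instead keeps the weight, decomposes dyadically in $s=z-(t-x)$, and reduces each dyadic piece to the maximal operator $R_\b$ at the critical weight $\gamma=-\b p$ covered by Lemma~\ref{lem:R2}(a), paying a dilation factor that produces the geometric series $\sum(k+1)2^{k(1/p-1)}$. I checked the details: the sup/sum interchange, the rescaling identity $\|g(\lambda\cdot)\|_{L^p(x^{-\b p}dx)}=\lambda^{\b-1/p}\|g\|_{L^p(x^{-\b p}dx)}$, and the bound $R_k f(x)\lesssim 2^{k(\b-1)}R_\b f(2^{-k-1}x)$ all hold as claimed, and your remark that working exactly at the endpoint of Lemma~\ref{lem:R2}(a) is what makes the dyadic bookkeeping essential (a crude pointwise bound on the log would fail) is exactly the right observation. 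The paper's proof is shorter and more self-contained; yours is longer but fits the structural spirit of the section, reducing everything to already-catalogued auxiliary operators, and your observation that $\a$ (hence the hypothesis $\a+\b=1/2$) plays no explicit role here is also accurate.
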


\section{Proof of Theorem \ref{thm:strong}, sufficiency part} \label{sec:Mproof}

In this section we prove the sufficiency part of Theorem \ref{thm:strong}.
To begin with, we rephrase it  in a more convenient form for the proof.
Assuming that $\a > -1$, $\a + \b > -1/2$, $1 \le p < \infty$ and $\delta \in \mathbb{R}$, Theorem \ref{thm:strong}
says that $M^{\a,\b}_*$ is bounded on $L^p(\mathbb{R}_+,x^{\delta}dx)$ if and only if $p > 1$ and
\begin{equation} \label{pcnd0}
\frac{1}{p} < \a + \b + \frac{1}2
\end{equation}
and
\begin{equation} \label{pcnd}
-1 < \delta, \qquad - \b p \le \delta, \qquad \delta < (2\a+2)p-1, \qquad \delta < (2\a+\b+1)p-1.
\end{equation}

Note that Condition \eqref{pcnd0} is meaningful only when $\a+\b < 1/2$.
In view of \cite[Theorem~1.5]{CNR2}, sufficiency of Conditions \eqref{pcnd0} and \eqref{pcnd}
is known for most choices of the parameters $\a$ and $\b$, see the comments succeeding \cite[Proposition 1.6]{CNR2}.
More precisely, the values of $\a$ and $\b$ that remain to be treated belong to the regions
\begin{align*}
\mathfrak{R}_1 & = \big\{ (\a,\b) : -1/2 < \a + \b \le 1/2,\; \b < 0,\; -\b \notin \mathbb{N} \big\},  \\
\mathfrak{R}_2 & = \big\{ (\a,\b) : \a > -1/2,\; 0 < \b < 1 \}.
\end{align*}

Taking into account \cite[Theorem 1.5]{CNR2}, it is enough to
show the following statements under the general assumptions $\a > -1$ and $\a + \b > -1/2$,
for $1 < p < \infty$, and under Condition \eqref{pcnd0}.
\begin{itemize}
\item[(i)]
If $(\a,\b) \in \mathfrak{R}_1$, then $M^{\a,\b}_*$ is bounded on $L^p(\mathbb{R}_+,x^{-\b p}dx)$.
\item[(ii)]
If $(\a,\b) \in \mathfrak{R}_2$, then $M^{\a,\b}_*$ is bounded on $L^p(\mathbb{R}_+,x^{\delta}dx)$ provided that
$$
- 1 < \delta, \qquad -\b p \le \delta, \qquad \delta < (2\a+\b+1)p-1.
$$
\end{itemize}
We may also assume throughout that $f \ge 0$, since the arguments we give are based on absolute estimates of the kernel.

In what follows we use some notation from \cite{CNR2} without much comment.
For the readers' convenience, Figure \ref{fig_reg} visualizes the relevant regions in $\mathbb{R}^3_+$ determining
the splitting of the kernel $K_t^{\a,\b}(x,z)$ introduced in \cite[Section 2.2]{CNR2}.
Recall that
\begin{align*}
E & = \big\{(t,x,z) \in \RR_+^3 : |x-z| < t < x+z\big\}, \\
F & = \big\{(t,x,z) \in \RR_+^3 : x+z < t \big\},
\end{align*}
and
$$
E = E_1 \cup E_2 \cup E_3, \qquad F = F_1 \cup F_2, \qquad F_2 = F'_2 \cup F''_2,
$$
all the sums being disjoint. For precise definitions of the regions appearing above, see \cite[Section 2.2]{CNR2}.
Note that up to boundaries those regions are specified by Figure \ref{fig_reg}, and the
boundaries do not really matter in our developments.

\begin{figure}
\centering
\begin{tikzpicture}[scale=1.6]
\draw[arrows=-angle 60] (0,0) -- (0,5.5);
\draw[arrows=-angle 60] (0,0) -- (5.5,0);
\node at (-0.15,-0.18) {$0$};
\node at (0.12,5.4) {$t$};
\node at (5.4,0.13) {$z$};
\draw[very thin] (0.5,-0.05) -- (0.5,0.05);
\node at (0.5,-0.2) {$x/2$};
\draw[very thin] (1,-0.05) -- (1,0.05);
\node at (1,-0.2) {$x$};
\draw[very thin] (1.5,-0.05) -- (1.5,0.05);
\node at (1.5,-0.2) {$3x/2$};
\draw[very thin] (2,-0.05) -- (2,0.05);
\node at (2,-0.2) {$2x$};
\draw[very thin] (3,-0.05) -- (3,0.05);
\node at (3,-0.2) {$3x$};
\draw[very thin] (4,-0.05) -- (4,0.05);
\node at (4,-0.2) {$4x$};
\draw[very thin] (5,-0.05) -- (5,0.05);
\node at (5,-0.2) {$5x$};
\draw[very thin] (-0.05,0.5) -- (0.05,0.5);
\node at (-0.2,0.5) {$\frac{x}2$};
\draw[very thin] (-0.05,1) -- (0.05,1);
\node at (-0.2,1) {$x$};
\draw[very thin] (-0.05,2) -- (0.05,2);
\node at (-0.2,2) {$2x$};
\draw[very thin] (-0.05,3) -- (0.05,3);
\node at (-0.2,3) {$3x$};
\draw[very thin] (-0.05,4) -- (0.05,4);
\node at (-0.2,4) {$4x$};
\draw[very thin] (-0.05,5) -- (0.05,5);
\node at (-0.2,5) {$5x$};
\fill[black!5!white] (0,1) -- (1,0) -- (5,4) -- (5,5) -- (4,5) -- cycle;
\fill[black!20!white] (0,1) -- (4,5) -- (0,5) -- cycle;
\draw[thick] (0,1) -- (4,5);
\draw[thick] (1,0) -- (5,4);
\draw[thick] (0,1) -- (1,0);
\draw[thick] (0,3) -- (4,3);
\draw[thick] (1,3) -- (2,5);
\draw[thick] (0.5,0.5) -- (1.5,0.5);
\node at (0.7,4) {$\mathbf{F_2'}$};
\node at (2.2,4) {$\mathbf{F_2''}$};
\node at (4.1,4) {$\mathbf{E_3}$};  
\node at (0.7,2.4) {$\mathbf{F_1}$}; 
\node at (1.7,1.6) {$\mathbf{E_2}$}; 
\node at (1.03,0.33) {$\mathbf{E_1}$};
\draw[very thin, dashed] (0,0.5) -- (0.5,0.5) -- (0.5,0);
\draw[very thin, dashed] (1,0) -- (1,3);
\draw[very thin, dashed] (1.5,0) -- (1.5,0.5);
\draw[very thin, dashed] (2,0) -- (2,3);
\draw[very thin, dashed] (4,0) -- (4,3);
\node[rotate=-45] at (0.4,0.4) {$z=x-t$};
\node[rotate=45] at (4.5,3.3) {$z=t+x$};
\node[rotate=45] at (3.5,4.3) {$z=t-x$};
\node[rotate=63.43] at (1.4,4.3) {$z=\frac{t-x}{2}$}; 
\draw[arrows=-angle 60] (0,0) -- (0,5.5);
\end{tikzpicture} 

\caption{Sections of regions $E_i$ and $F_i$ given $x>0$ fixed.} \label{fig_reg}
\end{figure}
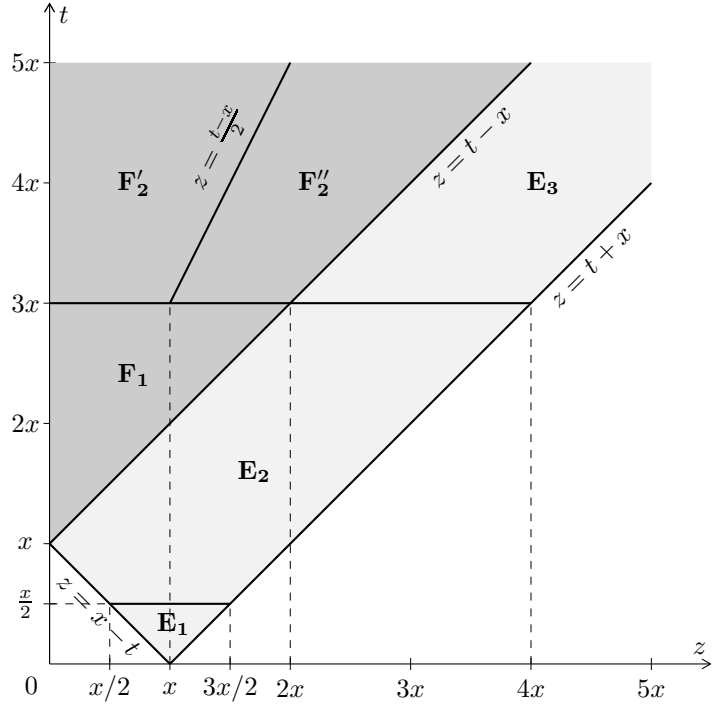

\subsection{Proof of (i)}
We let $(\a,\b) \in \mathfrak{R}_1$ and distinguish two cases.

\noindent \textbf{Case 1: $\a + \b < 1/2$.}
In view of the proof of \cite[Theorem 1.5]{CNR2}, see \cite[Case 2, pp.\,1613--1614]{CNR2}, it is enough to show
$L^p(\mathbb{R}_+,x^{-\b p}dx)$-boundedness of the part of $M^{\a,\b}_*$ related to the region $F_2''$
(see Figure \ref{fig_reg}).
This part is controlled by $\Psi_{*,F_2''}^{\a,\b}$ (cf.\ \cite[Theorem 2.2]{CNR2} and \cite[p.\,1609]{CNR2}),
$$
\Psi_{*,F_2''}^{\a,\b}f(x) := \sup_{t \ge 3x} \frac{1}{t^{2\a+2\b}} \int_{(t-x)/2}^{t-x}
	\big[ t^2 - (x-z)^2\big]^{-\a-1/2} \big[ t^2 - (x+z)^2\big]^{\a+\b-1/2} f(z) z^{2\a+1}\, dz,
$$
which is further estimated (see \cite[p.\,1612, Region $F_2''$]{CNR2})
$$
\Psi_{*,F_2''}^{\a,\b}f(x) \lesssim \sup_{t \ge 3x} \frac{1}{t^{2\a+\b+1}} \int_{\frac{t-x}2}^{t-x}
	(t+x-z)^{-\a-1/2} (t-x-z)^{\a+\b-1/2} f(z) z^{2\a+1}\, dz.
$$
Since $z \simeq t-x \simeq t$ on $F_2''$ (cf.\ \cite[(2.8)]{CNR2}), it follows that
$$
\Psi_{*,F_2''}^{\a,\b}f(x) \lesssim S_{\a,\b}f(x), \qquad x > 0,
$$
where $S_{\a,\b}$ is our auxiliary operator from Section \ref{sec:specaux}.
Thus the desired conclusion follows from Lemma \ref{lem:5}.

\noindent \textbf{Case 2: $\a + \b = 1/2$.}
In view of the proof of \cite[Theorem 1.5]{CNR2}, it is enough we prove $L^p(\mathbb{R}_+,x^{-\b p}dx)$-boundedness
of the parts of $M^{\a,\b}_{*}$ related to regions $E_3$ and $F_2''$. In \cite{CNR2} these parts were controlled
in terms of special operators $R$ (which is our $R_\eta$ from Section~\ref{sec:specaux} specified to $\eta = 0$) and $T_{\eta}$,
but we need a better control for our present purpose.
Recall, cf.\ \cite[Theorem 2.1(4)]{CNR2}, that for $\a$ and $\b$ under consideration
$$
|K_t^{\a,\b}(x,z)| \lesssim
		\begin{cases}
			 \frac{(xz)^{-\a-1/2}}{t}
				\log\frac{8xz}{(x+z)^2-t^2} & \textrm{in} \; E, \\
			\frac{1}{t} \big[t^2-(x-z)^2\big]^{-\a-1/2} \log\Big( 2 \frac{t^2-(x-z)^2}{t^2-(x+z)^2}\Big) & \textrm{in} \; F.
		\end{cases}
$$

The part of $M^{\a,\b}_*$ related to $E_3$ is controlled by the operator
\begin{align*}
f & \mapsto \sup_{t \ge 3x} \frac{1}t \int_{t-x}^{t+x} (xz)^{-\a-1/2} \log\bigg(
	\frac{8xz}{(x+z-t)(x+z+t)}\bigg) f(z) z^{2\a+1}\, dz \\
	& \qquad \simeq \sup_{t \ge 3x}  x^{-\a+1/2} \frac{1}x \int_{t-x}^{t+x} \log\bigg(\frac{4x}{x+z-t}\bigg) z^{\a-1/2}f(z)\, dz
	 = R_{\a,\b}^{\log}f(x),
\end{align*}
where we used the fact that $z \simeq t$ on $E_3$ (cf.\ \cite[(2.4)]{CNR2}).
By Lemma \ref{lem:Rlog} we infer the desired mapping property.

Passing to $F_2''$, this part of $M^{\a,\b}_*$ is controlled by
\begin{align*}
f & \mapsto \sup_{t \ge 3x} \frac{1}{t} \int_{\frac{t-x}2}^{t-x} \big[ t^2-(x-z)^2\big]^{-\a-1/2}
	\log\bigg( 2\frac{t^2-(x-z)^2}{t^2-(x+z)^2}\bigg) f(z) z^{2\a+1}\, dz \\
	& \qquad \simeq \sup_{t \ge 3x} t^{-\b}\int_{\frac{t-x}2}^{t-x} (t+x-z)^{-\a-1/2}
		\log\bigg( 2 + \frac{x}{t-x-z}\bigg) f(z)\, dz = S_{\a,\b}^{\log}f(x),
\end{align*}
where we used the fact that on $F_2''$ one has the relations (cf.\ \cite[(2.6),(2,8)]{CNR2})
$t^2-(x-z)^2 \simeq t(t+x-z)$, $t^2-(x+z)^2 \simeq t (t-x-z)$ and $z \simeq t$.
Now the $L^p(\mathbb{R}_+,x^{-\b p}dx)$-boundedness follows by Lemma \ref{lem:5b}.

The proof of (i) is completed.

\subsection{Proof of (ii)}
We let $(\a,\b) \in \mathfrak{R}_2$ and distinguish three cases.

\noindent \textbf{Case 1: $\a + \b > 1/2$.}
In view of the proof of \cite[Theorem 1.5, Case 1, p.\,1613]{CNR2}, it is enough we show that $T_{1-\b}$
is bounded on $L^p(\mathbb{R}_+,x^{\delta}dx)$ when both $\delta > -1$ and $\delta \ge -\b p$. This, however, is
contained in Lemma \ref{lem:Tchar}.

\noindent \textbf{Case 2: $\a + \b < 1/2$.}
Taking into account the proof of \cite[Theorem 1.5, Case 2, pp.\,1613--1614]{CNR2}, it suffices we focus on the part of
$M^{\a,\b}_*$ related to region $F_2''$. In \cite{CNR2} this part is controlled by the operator
\begin{equation} \label{Ttheta}
f \mapsto \big[ T_{(\a+1/2)(1+\theta)}\big(f^{1+\theta}\big)(x)\big]^{\frac{1}{1+\theta}}
\end{equation}
with any fixed $\theta > 0$ satisfying $\frac{1}{1+\theta} < \a+\b+1/2$.
Recall that in our present situation $-1/2 < \a < 1/2$ and $0 < \b < 1$.
We consider $\theta$ such that $\frac{1}p < \frac{1}{1+\theta}$ and $0 < \a + 1/2 < \frac{1}{1+\theta}< \a+\b+1/2$.
By Lemma \ref{lem:Tchar} the conditions for the $L^p(\mathbb{R}_+,x^{\delta}dx)$-boundedness of the operator in \eqref{Ttheta} are
$\delta \ge [(\a+1/2)(1+\theta)-1]\frac{p}{1+\theta}$ and $\delta > -1$. One can choose $\theta$ so that
$\frac{1}{1+\theta}$ is arbitrarily close to $\a + \b + 1/2$, which covers all $\delta > \max\{-\b p, -1\}$.

It remains to deal with $\delta = -\b p$ in case $\frac{1}p > \b$.
For this purpose a better control of the $F_2''$ part of $M^{\a,\b}_*$ is needed, and provided by,
see Case 1 in the proof of (i) above, $\Psi^{\a,\b}_{*,F_2''}f(x) \lesssim S_{\a,\b}f(x)$.
Now the conclusion follows from Lemma \ref{lem:5a}.

\noindent \textbf{Case 3: $\a + \b = 1/2$.}
In view of Case 2 above, repeating the argument from the proof of \cite[Theorem 1.5, Case 3, pp.\,1614--1615]{CNR2},
we get $L^p(\mathbb{R}_+,x^{\delta}dx)$-boundedness of $M^{\a,\b}_*$ for $\max\{-\b p,-1\} < \delta < (2\a+\b+1)p-1$.

It remains to consider $\delta = -\b p > -1$. Here the argument just invoked does not work for the parts of
$M^{\a,\b}_*$ related to regions $E_3$ and $F_2''$ (but it works for all the complementary parts).
Hence we must control those two parts in a better way, and this is done exactly as in the proof of (i), Case 2.
That is, the part associated with $E_3$ is controlled by $R^{\log}_{\a,\b}$, and that related
to $F_2''$ by $S_{\a,\b}^{\log}$. Therefore, the desired boundedness follows from Lemmas \ref{lem:Rlog} and \ref{lem:5b}, respectively.

The proof of (ii) is finished. This concludes the proof of the sufficiency part of Theorem~\ref{thm:strong}.

\section{Proof of Theorem \ref{thm:weak}, sufficiency parts} \label{sec:Wproof}

We split the proof of the sufficiency parts in Theorem \ref{thm:weak} into three parts contained in
the subsequent sections. The splitting is determined essentially by the location of $(\a,\b)$ with respect to the line
$\a + \b = 1/2$. In this way we consider the \emph{regular} case when $\a+\b > 1/2$ or $\a+\b =1/2$ and $\b$ is integer,
the \emph{logarithmic} case when $\a+\b=1/2$ and $\b$ is non-integer, and the \emph{singular} case when $\a + \b < 1/2$.
The terminology `regular', `logarithmic' and `singular' is suggested by the behavior of the kernel of $M_t^{\a,\b}$. 

We shall use the notation of \cite{CNR2}. In what follows we always assume that $f \ge 0$,
since our arguments are based on absolute estimates of the kernel $K_t^{\a,\b}(x,z)$.

\subsection{The regular case: $\boldsymbol{\a + \b > 1/2}$
	or $\boldsymbol{[\a+\b = 1/2 \;\textrm{and}\; \b \in \mathbb{Z}]}$} \, \medskip

It is convenient to consider several subcases.

\noindent \textbf{Subcase 1: $\a + \b > 1/2$ and $-\b \notin \mathbb{N}$.} 
As pointed out in \cite[Proof of Theorem 1.5, Case~1]{CNR2}, one has the control
\begin{align*}
M^{\a,\b}_{*}f(x) & \lesssim Lf(x) + H_{2\a+\b+1}f(4x) +  R_{\b}f(x)  \\
	& \qquad + H_{2\a+2}f(2x) + N_{2\a+2}f(x) + T_{1-\b}f(x). 
\end{align*}
From this the desired conclusion follows by Lemmas \ref{lem:H0}, \ref{lem:L}, \ref{lem:N2}, \ref{lem:R2} and \ref{lem:Tchar}.

\noindent \textbf{Subcase 2: $\a + \b \ge 1/2$ and $-\b \in \mathbb{N}$.}
In view of \cite[Proof of Theorem 1.5, Case 4]{CNR2}, one has the control
$$
M^{\a,\b}_{*}f(x) \lesssim Lf(x) + H_{2\a+\b+1}f(4x) + R_{\b}f(x).
$$
Now the conclusion follows by Lemmas \ref{lem:H0}, \ref{lem:L} and \ref{lem:R2}.

\noindent \textbf{Subcase 3: $(\a,\b) = (-1/2,1)$.}
In this case we have, see \cite[Theorem 2.1]{CNR2},
$$
K_t^{\a,\b}(x,z) \simeq \frac{1}t \qquad \textrm{in} \quad E \cup F.
$$
Therefore,
$$
M^{-1/2,1}_{*}f(x) \simeq \sup_{t>0} \frac{1}t \int_{\{z > 0 : |x-z| \le t\}} f(z)\, dz \lesssim E_{1,0}f(x).
$$
Using now Lemma \ref{lem:E2} we get the conclusion for $(\a,\b) = (-1/2,1)$.

It is perhaps interesting to observe that one actually has the identity $M^{-1/2,1}_{*}f(x) = \mathcal{M}f_{e}(x)$,
$x > 0$, where $f_e$ is an even extension of $f$ to $\mathbb{R}$ (the value at $0$ does not matter), and $\mathcal{M}$
is the classical one-dimensional centered Hardy-Littlewood maximal function.
This follows from the explicit formula for $K_t^{-1/2,1}(x,z)$, see \cite[p.\,4412]{CNR}.

\subsection{The logarithmic case: $\boldsymbol{\a + \b = 1/2}$ and $\boldsymbol{\b \notin \mathbb{Z}}$} \, \medskip

In this case we have, see \cite[Theorem 2.1]{CNR2},
\begin{equation} \label{logKbnd}
|K^{\a,\b}_t(x,z)| \lesssim
	\begin{cases}
		\frac{(xz)^{-\a-1/2}}t \log\frac{8xz}{(x+z)^2-t^2} & \textrm{in} \; E, \\
		\frac{1}t \big[ t^2-(x-z)^2\big]^{-\a-1/2}\log\Big( 2\frac{t^2-(x-z)^2}{t^2-(x+z)^2}\Big) & \textrm{in} \; F.
	\end{cases}
\end{equation}

To get the restricted weak type endpoint result that is needed,
we follow and extend Colzani et al.\ \cite{CoCoSt},
where for $\a \ge -1/2$ the special case of the natural weight $\delta= 2\a+1$ was done.
For the sake of clarity, we now make two statements which together with the already justified sufficiency part of
Theorem~\ref{thm:strong} give the sufficiency parts in Theorem \ref{thm:weak} for the considered $\a$ and $\b$. 
\begin{propo} \label{prop:5pre}
Assume that $-\b \notin \mathbb{N}$, $\a+\b=1/2$ and $\a > -1/2$ (and $\b < 1$).
Let $1 < p < \infty$ and $\delta = (2\a+\b+1)p-1$.
Then $M^{\a,\b}_*$ is of restricted weak type $(p,p)$ with respect to the measure space $(\mathbb{R}_+,x^{\delta}dx)$.
\end{propo}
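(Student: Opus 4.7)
The approach is to adapt the sufficiency argument for Theorem \ref{thm:strong} in the logarithmic case (Case 3 of the proof of (ii) in Section \ref{sec:Mproof}) to the upper endpoint weight $\delta=(2\a+\b+1)p-1$, following the strategy of Colzani, Cominardi and Stempak \cite{CoCoSt}. Since $1<p<\infty$, restricted weak type $(p,p)$ amounts to the weak type $(p,p)$ estimate tested only on $f=\chi_A$ with $\mu_\delta(A)<\infty$, and this is what I would verify. I split $M^{\a,\b}_*$ according to the regions $E_1,E_2,E_3,F_1,F_2',F_2''$ of \cite[Section 2.2]{CNR2} and use the kernel bound \eqref{logKbnd}; restricted weak type is preserved under finite sublinear sums, so each piece can be treated separately.

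For the pieces supported on $E_1,E_2,F_1,F_2'$, the domination estimates from \cite[Case 3 of the proof of Theorem 1.5]{CNR2} apply verbatim and control each piece by sums of the auxiliary operators $L$, $H_\eta$, $H_\eta^\infty$, $N_\eta$, $E_{k,\eta}$, $T_\eta$ with parameters $\eta$ chosen so that the corresponding Hardy-type condition sits exactly at the endpoint $\eta p-1=\delta$. The crucial point is that parts (c) of Lemmas \ref{lem:H0}, \ref{lem:Hinf}, \ref{lem:N2}, \ref{lem:E2} and \ref{lem:Tchar} weaken the strict inequality of parts (a) to a non-strict one, which is exactly what is required at the endpoint $\delta=(2\a+\b+1)p-1$.

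The delicate pieces are those supported on $E_3$ and $F_2''$, where the kernel carries a logarithmic factor. By the reasoning of Case 2 of the proof of (i) in Section \ref{sec:Mproof}, they are dominated, up to constants, by a variant of $R_{\a,\b}^{\log}$ and by $S_{\a,\b}^{\log}$ respectively. Lemmas \ref{lem:Rlog} and \ref{lem:5b} give only strong type for the special weight $-\b p$. To obtain restricted weak type at the upper endpoint, I would dyadically decompose the log factor in the parameter $x+z-t$ (for $E_3$) or $t-x-z$ (for $F_2''$): on each dyadic shell the log contributes only a bounded constant, and after geometric summation the pieces reduce to Hardy-type operators at their restricted weak type endpoints provided by Lemmas \ref{lem:H0}(c) and \ref{lem:Hinf}(c). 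Since $f=\chi_A$, the resulting geometric series in the level parameter is finite and yields the required $L^{p,1}\to L^{p,\infty}$ estimate. This mirrors \cite[Corollary 3.5]{CoCoSt}, adapted from the natural weight $\delta=2\a+1$ to the general endpoint weight.

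The main obstacle I expect is the $F_2''$ piece: the supremum in $t$ interacts with the logarithmic singularity at $z=t-x$, so before dyadic decomposition one has to linearize the supremum via a measurable selector $t=t(x)$ nearly attaining it. After this linearization, dyadic splitting of the integration in $t-x-z$ followed by the endpoint Hardy inequality would yield the desired restricted weak type bound. The analogous step for the $E_3$ piece is easier because the variable $x+z-t$ varies on a bounded interval for each $t$, and the logarithmic integral is absolutely convergent against the natural measure $z^{\a-1/2}dz$.
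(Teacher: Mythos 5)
Your proposal correctly identifies the strategy of following Colzani et al.\ \cite{CoCoSt} and correctly locates the difficulty in the logarithmic singularity near $z=t-x$, but it then deviates from both \cite{CoCoSt} and the paper in a way that leaves a real gap. The paper does not keep the CNR2 regions $E_1,\dots,F_2''$; it isolates the range $t\le x$ (where the logarithm is automatically bounded) and, for $t>x$, decomposes into the four sets $\mathcal{D}_1,\dots,\mathcal{D}_4$ cut out by the monotone function $\varphi(z)=4xz/(t^2-(x-z)^2)$. On $\mathcal{D}_1\cup\mathcal{D}_4$ and on $\{t\le x\}$ the logarithm is uniformly bounded, and these pieces are controlled by $L$, $H_{2\a+\b+1}$ and $H^\infty_{-2\a-\b-1}$, whose restricted weak type at $\delta=(2\a+\b+1)p-1$ is given by Lemmas \ref{lem:H0}(c), \ref{lem:Hinf}(c) and \ref{lem:L}. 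The genuinely logarithmic zones $\mathcal{D}_2\cup\mathcal{D}_3$ are then dispatched by a \emph{single} H\"older estimate: at the endpoint $\delta=(\a+3/2)p-1$ the identity $(\a+1/2-\delta)p'+\delta=-1$ absorbs the $z$-weight exactly, the integral of $[\log(\cdot)]^{p'}\,dz/z$ over the relevant interval is bounded uniformly in $t,x$, and one obtains the pointwise bound by $x^{-\a-3/2}\|f\|_{L^p(x^\delta dx)}$, which lies in $L^{p,\infty}(x^\delta dx)$. No dyadic decomposition, no linearization of the supremum, no geometric summation.

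The gap in your argument is exactly where you predicted trouble. For the $E_3$ and $F_2''$ pieces you propose to decompose dyadically in $t-x-z$ (resp.\ $x+z-t$), obtain a restricted weak type endpoint bound on each shell, and sum a geometric series. But at the endpoint $\delta=(2\a+\b+1)p-1$ there is no slack: you would need the shell-level restricted weak type constants to decay geometrically against the linear growth $\sim j$ of the logarithm, and this decay is asserted rather than proved. An infinite sum of restricted weak type operators at their endpoints is not automatically of restricted weak type; the claim ``since $f=\chi_A$ the geometric series is finite'' is precisely the estimate that requires justification. Similarly, your assertion that the CNR2 strong-type dominations for $E_1,E_2,F_1,F_2'$ ``apply verbatim'' at the endpoint needs verification: in the case $\a+\b=1/2$ those dominations involve the H\"older-power device $f\mapsto[T_{(\a+1/2)(1+\theta)}(f^{1+\theta})]^{1/(1+\theta)}$, which degenerates as $\frac{1}{1+\theta}\to\a+\b+1/2$ and does not hand you a restricted weak type endpoint for free. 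The one-shot H\"older computation the paper performs (inherited from \cite{CoCoSt}) avoids this bookkeeping entirely and, as a bonus, shows the logarithmic piece is even of weak type $(p,p)$, so the restricted weak type limitation comes solely from $H_{2\a+\b+1}$ at its endpoint.
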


\begin{propo} \label{prop:6pre}
Assume that $-1 < \a < -1/2$ and $\a+\b=1/2$. Let $1 < p < \infty$ and $\delta = (2\a+2)p-1$.
Then $M^{\a,\b}_*$ is of restricted weak type $(p,p)$ with respect to the measure space $(\mathbb{R}_+,x^{\delta}dx)$.
\end{propo}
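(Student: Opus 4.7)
The proof adapts the method used in \cite{CoCoSt} and in Proposition \ref{prop:5pre} to the range $-1 < \alpha < -1/2$, where $\beta = 1/2 - \alpha > 1$ and the binding endpoint $\delta = (2\alpha + 2)p - 1$ originates from the ``close'' regions $E_1, E_2, F_1$ rather than from $E_3, F_2''$ as in Proposition \ref{prop:5pre}. Starting from the logarithmic kernel bound \eqref{logKbnd}, I decompose $M^{\alpha,\beta}_*$ as a sum of sub-operators $\Psi^{\alpha,\beta}_{*,E_i}$, $\Psi^{\alpha,\beta}_{*,F_j}$ supported on the standard regions of \cite[Section 2.2]{CNR2}, and reduce to a restricted weak type $(p,p)$ bound on $(\mathbb{R}_+, x^{\delta}dx)$ for each piece tested on $f = \chi_A$ with $\mu(A) < \infty$, where $\mu = x^{\delta}dx$.

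For the pieces $\Psi^{\alpha,\beta}_{*,E_3}$ and $\Psi^{\alpha,\beta}_{*,F_2''}$, which lie strictly away from the endpoint when $\beta > 1$, I would reuse the pointwise dominations $\Psi^{\alpha,\beta}_{*,E_3} f \lesssim R^{\log}_{\alpha,\beta} f$ and $\Psi^{\alpha,\beta}_{*,F_2''} f \lesssim S^{\log}_{\alpha,\beta} f$ established in Section \ref{sec:Mproof}, and then extend Lemmas \ref{lem:Rlog} and \ref{lem:5b} to the weight $x^{(2\alpha+2)p-1}dx$. The extension should follow by a direct Schur-test computation: under our hypotheses the inequality $(2\alpha + 2)p - 1 > -\beta p$ holds strictly (it amounts to $(\alpha + 5/2)p > 1$, which is automatic since $\alpha > -1$ and $p > 1$), so a substitution $z = t - x(1-s)$ reduces each sub-operator to an integral operator whose kernel has an integrable logarithmic singularity.

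The principal work concerns the endpoint pieces $\Psi^{\alpha,\beta}_{*,E_1}, \Psi^{\alpha,\beta}_{*,E_2}, \Psi^{\alpha,\beta}_{*,F_1}, \Psi^{\alpha,\beta}_{*,F_2'}$. On each such region I subdivide along the singular locus $\{t = x+z\}$ into dyadic shells, the $j$-th shell being characterized by $\log(8xz/[(x+z)^2-t^2]) \simeq j$ inside $E$ (respectively $\log(2(t^2-(x-z)^2)/(t^2-(x+z)^2)) \simeq j$ inside $F$). On the $j$-th shell the corresponding sub-operator is majorized pointwise by $(j+1)$ times one of the Hardy-type operators $H_{2\alpha+2}$, $N_{2\alpha+2}$, $E_{2\alpha+2,0}$, or the local maximal $L$, each of which is of restricted weak type $(p,p)$ on $(\mathbb{R}_+, x^{\delta}dx)$ at $\delta = (2\alpha+2)p - 1$ by Lemmas \ref{lem:H0}(c), \ref{lem:N2}(c), \ref{lem:E2}(c), \ref{lem:L}(b). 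The main obstacle is that a naive summation over $j$ loses a factor $\sum j = \infty$; I would bypass this by arguing at the level of distribution functions in the spirit of \cite[Section 3]{CoCoSt}, showing that the $j$-th shell contributes a superlevel set of $\mu$-measure $\lesssim 2^{-\sigma j}\lambda^{-p}\mu(A)$ for some geometric gain $\sigma > 0$. The gain should come from an extra factor $2^{-\varepsilon j}$ available on the $j$-th shell once one exploits a slightly sharper Hardy-type inequality valid in the open range $\delta < (2\alpha+2)p - 1$ of strong type boundedness furnished by Theorem \ref{thm:strong}. The delicate technical point I anticipate is to produce such an exponent $\sigma > 0$ uniformly across all four endpoint regions.
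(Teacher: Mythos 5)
Your proposal diverges from the paper's proof in structure and, more importantly, has a genuine gap at the crux. The paper does not use the $E_1,\dots,E_3,F_1,F_2',F_2''$ decomposition for this proposition; it reuses exactly the $\mathcal{D}_1,\dots,\mathcal{D}_4$ splitting (via $\varphi(z)=4xz/(t^2-(x-z)^2)$) from Proposition~\ref{prop:5pre}. On $\mathcal{D}_1\cup\mathcal{D}_4$ the kernel is simply bounded by $x^{-(2\alpha+2)}$ (here the sign change $\alpha+1/2<0$ is what forces $(2\alpha+2)$ rather than $\alpha+3/2$), and that piece is controlled by $H_{2\alpha+2}+H^{\infty}_{-2\alpha-2}$, which are restricted weak type at $\delta=(2\alpha+2)p-1$ by Lemmas~\ref{lem:H0} and \ref{lem:Hinf}. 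On the near-singular sets $\mathcal{D}_2,\mathcal{D}_3$ the logarithm is handled by a \emph{single} application of H\"older's inequality: with $\delta=(2\alpha+2)p-1$ one has $(2\alpha+1-\delta)p'+\delta=-1$, so the logarithmic factor lands in $L^{p'}(dz/z)$ over a dyadic interval near $z=t-x$ (or near $t$), where it is uniformly bounded. No dyadic-shell decomposition and no geometric gain are needed.

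In your proposal, the dyadic-shell step is where the argument breaks down. You correctly note that a naive sum $\sum_j (j+1)$ diverges, and you propose to recover a gain $2^{-\sigma j}$ at the endpoint $\delta=(2\alpha+2)p-1$, but you never produce $\sigma>0$. The appeal to ``a slightly sharper Hardy-type inequality valid in the open range $\delta<(2\alpha+2)p-1$'' does not transfer to the closed endpoint: strong-type bounds at interior $\delta'$ do not by themselves yield an exponential-in-$j$ decay of the superlevel-set measure at $\delta$; one would need an extrapolation or real-interpolation argument that you do not supply (and which would, in effect, re-derive the H\"older trick in a roundabout way). Your own phrasing --- ``the delicate technical point I anticipate'' --- signals that the crucial step is left unproved. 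Separately, the proposed extension of Lemmas~\ref{lem:Rlog} and \ref{lem:5b} to the weight $x^{(2\alpha+2)p-1}dx$ is plausible but also only asserted; in the paper's scheme this extension is unnecessary because $E_3$ and $F_2''$ are absorbed into the $\mathcal{D}_i$ decomposition rather than treated via $R^{\log}_{\alpha,\beta}$, $S^{\log}_{\alpha,\beta}$.
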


\begin{proof}[{Proof of Proposition \ref{prop:5pre}}]
We follow the strategy of Colzani et al.\ \cite[pp.\,50--53]{CoCoSt}.
Let us write $E$ and $F$ in terms of bounds on $z$,
\begin{align*}
E & = \big\{ (t,x,z) \in \mathbb{R}^3_+ : |t-x| < z < t+x \big\}, \\
F & = \big\{ (t,x,z) \in \mathbb{R}^3_+ : z < t-x \big\}.
\end{align*}
In what follows we assume that $(t,x,z) \in E \cup F$ and split into the following three cases.
\begin{itemize}
\item[(i)] $t \le x$.
Then $(t,x,z) \in E$ and $x-t < z < x+t$, and we have, see \eqref{logKbnd},
$$
|K^{\a,\b}_t(x,z)| \lesssim \frac{(xz)^{-\a-1/2}}{t},
$$
since the argument of the logarithm is strictly between $2$ and $4$. Indeed, we have
$$
\frac{8xz}{(x+z)^2-t^2} \le \frac{8xz}{(x+z)^2-x^2} = \frac{8xz}{z(2x+z)} < \frac{8xz}{2xz} = 4.
$$
The lower bound by $2$ is easily verified using the constraint $|x-z|<t$.
\item[(ii)] $x < t$ and $t-x < z < t+x$.
Then $(t,x,z) \in E$ and one has the bound
$$
|K_t^{\a,\b}(x,z)| \lesssim \frac{(xz)^{-\a-1/2}}{t} \log \frac{8xz}{(x+z)^2-t^2}.
$$
\item[(iii)] $x < t$ and $z < t-x$.
Then $(t,x,z) \in F$ and, see \eqref{logKbnd},
$$
|K^{\a,\b}_t(x,z)| \lesssim \frac{(xz)^{-\a-1/2}}{t} \bigg[ \frac{xz}{t^2-(x-z)^2}\bigg]^{\a+1/2}
	\log\bigg( 2 \frac{t^2-(x-z)^2}{t^2-(x+z)^2}\bigg).
$$
\end{itemize}

Considering (i), we see that the part of $M^{\a,\b}_*$ emerging from restricting the supremum to $t \le x/2$ is controlled by 
$$
\sup_{t \le x/2} \frac{1}{t} \int_{x-t}^{x+t} (xz)^{-\a-1/2} f(z) z^{2\a+1}\, dz \simeq Lf(x).
$$
Furthermore, the part of $M^{\a,\b}_*$ coming from restriction to $x/2 < t \le x$ is controlled by
$$
\sup_{x/2 < t \le x} \frac{1}{t} x^{-\a-1/2} \int_{x-t}^{x+t} f(z) z^{\a+1/2}\, dz
	\simeq \frac{1}{x^{\a+3/2}} \int_0^{2x} f(z) z^{\a+1/2}\, dz \simeq H_{2\a+\b+1}(2x)
$$
(notice that $2\a+\b+1 = \a+3/2$, since $\a+\b=1/2$).
Both $L$ and $H_{2\a+\b+1}$ possess the desired mapping property, see Lemmas \ref{lem:H0} and \ref{lem:L}.

Next, we focus on the case $x < t$, see (ii) and (iii) above (then $0 < z < t+x$).
According to the function
$$
\varphi \colon z \mapsto \frac{4xz}{t^2-(x-z)^2},
$$
which is increasing for $z \in (0,t+x)$ and maps $(0,t+x)$ onto $(0,\infty)$, we split
$(E \cup F) \cap \{(t,x,z) \in \RR_+^3: x < t\}$ into the sets
\begin{align*}
\mathcal{D}_1 & = \big\{ (t,x,z) \in \mathbb{R}^3_+ : 0 < \varphi(z) < 1 - \varepsilon \big\} \cap \{ x < t\} \subset F, \\
\mathcal{D}_2 & = \big\{ (t,x,z) \in \mathbb{R}^3_+ : 1 - \varepsilon < \varphi(z) < 1 \big\} \cap \{ x < t\} \subset F, \\
\mathcal{D}_3 & = \big\{ (t,x,z) \in \mathbb{R}^3_+ : 1 < \varphi(z) < 1 + \varepsilon \big\} \cap \{ x < t\} \subset E, \\
\mathcal{D}_4 & = \big\{ (t,x,z) \in \mathbb{R}^3_+ : 1 + \varepsilon < \varphi(z) \big\} \cap \{ x < t\} \subset E;
\end{align*}
here $0 < \varepsilon < 1$ is fixed, to be specified in a moment.

On $\mathcal{D}_1$ and $\mathcal{D}_4$ the arguments of the logarithms in (iii) and (ii), respectively, are bounded from above, as can
easily be verified. Moreover, we have
$$
\bigg[ \frac{xz}{t^2-(x-z)^2}\bigg]^{\a+1/2} = \bigg( \frac{\varphi(z)}{4}\bigg)^{\a+1/2} < 1 \qquad \textrm{on}\;\; \mathcal{D}_1,
$$
since we assumed $\a > -1/2$. Thus
$$
|K^{\a,\b}(x,z)| \lesssim \frac{(xz)^{-\a-1/2}}{t} \qquad \textrm{on} \;\; \mathcal{D}_1 \cup \mathcal{D}_4. 
$$
It follows that the part of $M^{\a,\b}_*$ related to $\mathcal{D}_1 \cup \mathcal{D}_4$ is controlled by
$$
\frac{1}{x^{\a+3/2}} \int_0^{\infty} f(z) z^{\a+1/2}\, dz = H_{\a+3/2}f(x) + H_{-\a-3/2}^{\infty}f(x).
$$
By Lemmas \ref{lem:H0} and \ref{lem:Hinf}, $H_{\a+3/2}$ and $H_{-\a-3/2}^{\infty}$ have the desired mapping property
(recall again that $\a+3/2 = 2\a + \b +1$).

It remains to deal with the parts of $M^{\a,\b}_*$ corresponding to $\mathcal{D}_2$ and $\mathcal{D}_3$.
Choosing $\varepsilon$ sufficiently small ($\varepsilon = 1/4$ will do) we have the inclusions
\begin{align*}
\mathcal{D}_2 & \subset \big\{ (t-x)/2 < z < t-x \big\}, \\
\mathcal{D}_3 & \subset \big\{ (t-x) < z < t \wedge 2(t-x) \big\}
\end{align*}
(to verify them, use the monotonicity of $\varphi$; for instance, in case of $\mathcal{D}_2$ evaluate $\varphi$
at endpoints of $((t-x)/2,t-x)$ to see that $\varphi((t-x)/2) < 2/3$ and $\varphi(t-x)=1$; the case of
$\mathcal{D}_3$ is similar). Then
\begin{align*}
& \int \ind{\mathcal{D}_2}(t,x,z) |K_t^{\a,\b}(x,z)| f(z)\, d\mu_{\a}(z) \\
& \qquad  \lesssim
	\frac{1}{x^{\a+3/2}} \int_{\frac{t-x}2}^{t-x} z^{\a+1/2} \log\bigg( 2\frac{t^2-(x-z)^2}{t^2-(x+z)^2}\bigg) f(z)\, dz \\
& \qquad = \frac{1}{x^{\a+3/2}} \int_{\frac{t-x}2}^{t-x} z^{\a+1/2-\delta}
	\log\bigg( 2\frac{t^2-(x-z)^2}{t^2-(x+z)^2}\bigg) f(z) z^{\delta}\, dz.
\end{align*}
Here $\delta = (\a+3/2)p-1$, which implies $(\a+1/2-\delta)p' + \delta = -1$.
By H\"older's inequality, we can further estimate by
$$
\frac{1}{x^{\a+3/2}} \Bigg( \int_0^{\infty} f^p(z) z^{\delta}\, dz\Bigg)^{1/p}
\Bigg( \int_{\frac{t-x}2}^{t-x} \bigg[\log\bigg(2 \frac{t^2-(x-z)^2}{t^2-(x+z)^2}\bigg)\bigg]^{p'} \frac{dz}{z}\Bigg)^{1/p'},
$$
and it is straightforward to check that the second integral here is controlled by a constant independent of $t$ and $x$
(just estimate from above the argument of the logarithm by $2(t-x+z)/(t-x-z)$ and then change the variable $z\mapsto (t-x)z$
to arrive at a convergent integral independent of $t$ and $x$).

Analogously,
\begin{align*}
& \int \ind{\mathcal{D}_3}(t,x,z) |K_t^{\a,\b}(x,z)| f(z)\, d\mu_{\a}(z) \\
& \qquad \lesssim
\frac{1}{x^{\a+3/2}} \Bigg( \int_0^{\infty} f^p(z) z^{\delta}\, dz\Bigg)^{1/p}
\Bigg( \int_{t-x}^{t \wedge 2(t-x)} \bigg[\log\bigg( \frac{8xz}{(x+z)^2-t^2}\bigg)\bigg]^{p'} \frac{dz}{z}\Bigg)^{1/p'},
\end{align*}
and again it is not hard to see (just inspect the cases $x < t < 2x$ and $t > 2x$) that the integral involving
the logarithm is controlled by a constant independent of $t$ and $x$.

Altogether, this shows that the part of $M^{\a,\b}_*$ related to $\mathcal{D}_2 \cup \mathcal{D}_3$ is controlled by
the operator
$$
f \mapsto \frac{1}{x^{\a+3/2}} \|f\|_{L^p(x^{\delta}dx)},
$$
which is even weak type $(p,p)$ with respect to $(\mathbb{R}_+,x^{\delta}dx)$.
This finishes the proof.
\end{proof}

We now extend the arguments from the proof of Proposition \ref{prop:5pre} to cover the remaining case $-1 < \a < -1/2$
from Proposition \ref{prop:6pre}.

\begin{proof}[{Proof of Proposition \ref{prop:6pre}}]
We proceed as in the proof of Proposition \ref{prop:5pre}.
Considering (i) and that part of $M^{\a,\b}_*$, we get the same control in terms of $L$ and $H_{2\a+\b+1}$.
These operators possess the desired mapping property, see Lemmas \ref{lem:H0} and \ref{lem:L}
(notice that now $2\a+\b+1 > 2\a+2$).

We pass to (ii) and (iii). Thus we have $x<t$ and $0 < z < t+x < 2t$.
We first look at $\mathcal{D}_1$ and $\mathcal{D}_4$.
Recalling that $-1 < \a < -1/2$, on $\mathcal{D}_1$ we have
\begin{equation} \label{fct1}
|K_t^{\a,\b}(x,z)| \lesssim \frac{1}{t} \big[ t^2-(x-z)^2\big]^{-\a-1/2} \le \frac{1}{t^{2\a+2}} < \frac{1}{x^{2\a+2}},
\end{equation}
while on $\mathcal{D}_4$
\begin{equation} \label{fct2}
|K^{\a,\b}_t(x,z)| \lesssim \frac{(xz)^{-\a-1/2}}{t} \lesssim \frac{t^{-2\a-1}}t < \frac{1}{x^{2\a+2}}.
\end{equation}
Therefore, the part of $M^{\a,\b}_*$ related to $\mathcal{D}_1 \cup \mathcal{D}_4$ is controlled by
$$
\frac{1}{x^{2\a+2}} \int_0^{\infty} f(z) z^{2\a+1}\, dz = H_{2\a+2}f(x) + H_{-2\a-2}^{\infty}f(x).
$$
By Lemmas \ref{lem:H0} and \ref{lem:Hinf}, $H_{2\a+2}$ and $H_{-2\a-2}^{\infty}$ have the mapping property in question.

It remains to deal with $\mathcal{D}_2$ and $\mathcal{D}_3$.
Since the non-logarithmic factors in the kernel bound can be estimated as in \eqref{fct1} and \eqref{fct2},
the part of $M^{\a,\b}_*$ related to $\mathcal{D}_2 \cup \mathcal{D}_3$ is controlled by
\begin{align*}
& \frac{1}{x^{2\a+2}} \int_{\frac{t-x}2}^{t-x} z^{2\a+1-\delta} \log\bigg( 2 \frac{t^2-(x-z)^2}{t^2-(x+z)^2}\bigg)
	f(z) z^{\delta}\, dz \\
& \qquad + \frac{1}{x^{2\a+2}} \int_{t-x}^{t \wedge 2(t-x)} z^{2\a+1-\delta}
	\log\bigg( \frac{8xz}{(x+z)^2-t^2}\bigg) f(z) z^{\delta} \, dz.
\end{align*}
Here $\delta = (2\a+2)p-1$, and one has $(2\a+1-\delta)p' + \delta = -1$.
Applying H\"older's inequality twice as in the proof of Proposition \ref{prop:5pre}, we arrive at the control by the operator
$$
f \mapsto \frac{1}{x^{2\a+2}} \|f\|_{L^p(x^{\delta}dx)},
$$
which is even weak type $(p,p)$ with respect to $(\mathbb{R}_+,x^{\delta}dx)$.
The conclusion follows.
\end{proof}

\subsection{The singular case: $\boldsymbol{-1/2 < \a + \b < 1/2}$} \, \medskip

In this subsection we consider $\a > -1$ and $\b \in \mathbb{R}$ such that $-1/2 < \a + \b < 1/2$.
First, we need some technical preparation.
In consistence with the notation from \cite{CNR2}, let
\begin{align*} 
\Psi_{*,E}^{\alpha,\beta} f (x) &
= \sup_{t > 0}
\frac{x^{-2\alpha - \beta}}{ t^{2\alpha + 2\beta}} \int_{\abs{t-x}}^{t+x}
\Big( 
\big[(x+t)^2 - z^2 \big] \big[z^2 - (x-t)^2 \big]
\Big)^{\alpha + \beta - 1/2} z^{1-\beta} f(z) \, dz, \\
\Psi_{*,F}^{\alpha,\beta} f (x)
& = \sup_{t > x}
 \frac{1}{t^{2\alpha + 2\beta}} \int_0^{t-x} \big[ t^2-(x-z)^2\big]^{-\a-1/2}
	\big[t^2-(x+z)^2\big]^{\a+\b-1/2} z^{2\a+1} f(z) \, dz.
\end{align*}

We prove the following result, which is inspired by \cite[Theorem 3.1 (b)]{DMO}.
\begin{lem} \label{lem:DRgen}
Let $\alpha > -1$,
$- 1/2 < \alpha + \beta < 1/2$ and $2\alpha + \beta > -1$. Then
\begin{align*} 
\Psi_{*,E}^{\alpha,\beta} \ind{\mathcal{A}} (x) 
& \lesssim 
\big[E_{2,\beta/(\alpha + \beta + 1/2)}\ind{\mathcal{A}}(x)\big]^{\alpha + \beta + 1/2} 
+ \big[R_{\beta/(\alpha + \beta + 1/2)}\ind{\mathcal{A}}(x)\big]^{\alpha + \beta + 1/2}, \\
\Psi_{*,F}^{\alpha,\beta} \ind{\mathcal{A}} (x) 
& \lesssim 
N_{2\alpha + 2}  \ind{\mathcal{A}} (x) 
+
T_{1 - \beta}  \ind{\mathcal{A}} (x) 
+
\big[T_{(\alpha +1/2)/(\alpha + \beta + 1/2)} \ind{\mathcal{A}}(x)\big]^{\alpha + \beta + 1/2}  \\
& \quad + 
\big[H_{(\alpha +1/2 + (1-\beta)\wedge 0)/(\alpha + \beta + 1/2) + 1}\ind{\mathcal{A}}(2x)\big]^{\alpha + \beta + 1/2},
\end{align*}
uniformly in $x > 0$ and measurable subsets $\mathcal{A}$ of $\mathbb{R}_+$.
\end{lem}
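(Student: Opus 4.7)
Set $\sigma = \a+\b+1/2 \in (0,1)$ and note that $2\a+2\b = 2\sigma-1$, which will make the prefactors of the kernels collapse cleanly. The plan is to combine direct pointwise estimates of the kernel (yielding the linear terms $N_{2\a+2}\ind{\mathcal{A}}$ and $T_{1-\b}\ind{\mathcal{A}}$ on the nonsingular portions of $F$) with a rearrangement/bathtub argument, in the spirit of \cite[Theorem 3.1(b)]{DMO}, on the portions where $[(t+x)^2 - z^2]^{\a+\b-1/2}$ or $[t^2-(x+z)^2]^{\a+\b-1/2}$ is singular. The bathtub estimate states that if $f$ is monotone on an interval $I$ and $\mathcal{A} \subset I$, then $\int_I f \ind{\mathcal{A}}$ is maximized when $\mathcal{A}$ sits adjacent to the endpoint where $f$ is largest; applied to $f(z) = (t+x-z)^{\sigma-1}$ with $\sigma \in (0,1)$ this yields $\int (t+x-z)^{\sigma-1}\ind{\mathcal{A}}(z)\,dz \le |\mathcal{A}|^\sigma/\sigma$, producing exactly the $\sigma$-th power structure demanded by the lemma.

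For $\Psi_{*,E}^{\a,\b}\ind{\mathcal{A}}(x)$, split $[|t-x|, t+x]$ at its midpoint $x \vee t$. On the right half $[x \vee t, t+x]$ one has $z \simeq x \vee t$ and $[z^2-(t-x)^2]^{\sigma-1} \simeq [(t \wedge x)(x \vee t)]^{\sigma-1}$, a bounded quantity. Substituting into the prefactor $x^{-2\a-\b}t^{-2\a-2\b}z^{1-\b}$ and using $2\a+2\b=2\sigma-1$, all powers of $t$ and $x$ collapse, leaving only $(t+x-z)^{\sigma-1}$ to be integrated against $\ind{\mathcal{A}}$. The bathtub bound then gives control by $|\mathcal{A}\cap[x\vee t, t+x]|^\sigma$, which matches the kernel of $E_{2,\b/\sigma}$ in the regime $t \lesssim x$ (where $[|t-x|, t+x]$ is a bounded interval containing $x$ with $b^2-a^2 \simeq xt$) and the kernel of $R_{\b/\sigma}$ in the regime $t > 2x$ (where the interval has length $2x$ and sits near $t$). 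The left half is treated symmetrically.

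For $\Psi_{*,F}^{\a,\b}\ind{\mathcal{A}}(x)$, partition $[0, t-x]$ into three pieces. On the small-$z$ zone $z \lesssim x$, both kernel factors are of size $t^{2\sigma-2}$; combining with $z^{2\a+1}dz$ and the sup over $t>x$ reproduces the defining kernel of $N_{2\a+2}$ and yields the linear bound $N_{2\a+2}\ind{\mathcal{A}}$. On the middle zone $x \lesssim z \lesssim (t-x)/2$, a direct pointwise estimate of the mildly nonsingular kernel produces the linear $T_{1-\b}\ind{\mathcal{A}}$-bound, while a complementary absorption of the weight $z^{1-\b}$ into a Hardy average produces the $\sigma$-th power of $H_{(\a+1/2+(1-\b)\wedge 0)/\sigma + 1}\ind{\mathcal{A}}(2x)$. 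On the endpoint zone $z \simeq t-x$, the factor $[t^2-(x+z)^2]^{\sigma-1}$ is the singular one, and the bathtub argument performed exactly as in the $\Psi_{*,E}$ analysis yields the $\sigma$-th power of $T_{(\a+1/2)/\sigma}\ind{\mathcal{A}}$. The correction $(1-\b)\wedge 0$ encodes whether $\b \le 1$ (when $z^{1-\b}$ is largest near the upper endpoint and controlled by $t^{1-\b}$) or $\b > 1$ (when $z^{1-\b}$ is largest near the lower endpoint and must be absorbed into the Hardy exponent).

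The main obstacle will be the careful bookkeeping: in each subregion one must verify, via an appropriate change of variable rescaling the singular factor to a unit interval, that the bathtub bound reproduces precisely the $\sigma$-th power of the announced auxiliary operator with the prescribed exponent $\eta$, and with constants uniform in $t$ and $x$. Uniformity depends crucially on $\sigma > 0$, that is on the hypothesis $\a+\b > -1/2$, through the finiteness of the beta-type constant $\int_0^1 (1-u)^{\sigma-1}\,du = 1/\sigma$. A secondary difficulty is unifying the two cases $\b \le 1$ and $\b > 1$ for the Hardy-type estimate so that the single formula involving $(1-\b)\wedge 0$ emerges, and ensuring the three subregion estimates for $\Psi_{*,F}$ can be glued consistently in the transition zones.
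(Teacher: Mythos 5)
Your proposal correctly identifies the overall shape of the argument (a pointwise reduction followed by rearrangement/``bathtub'' estimates that produce the $\sigma$-th power structure, with $\sigma = \a+\b+1/2$), and your observation that $2\a+2\b = 2\sigma - 1$ makes the prefactors collapse is on target. However, there are two genuine gaps.

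First, the bathtub argument alone is not enough. After the kernel reductions, the crux (cf.\ the paper's \eqref{R3.2}) requires inequalities of the form
\[
\int_{\mathcal{A}_2} z^{2\a+\b}\,dz \;\lesssim\; \Big(\int_{\mathcal{A}_2} z^{(\a+1/2)/\sigma}\,dz\Big)^{\sigma},
\]
uniformly over arbitrary measurable $\mathcal{A}_2 \subset \mathbb{R}_+$, for subregions where the ratio of the endpoints $A,B$ is unbounded (in $E$, when $A \ll B$, i.e.\ when $t$ is close to $x$ or $t \gg x$; in $F$, for the $P_1$ piece of $F_1$). Here the two power weights have different exponents that do not scale into one another, so a monotonicity/rearrangement argument does not apply. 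The paper handles this with a dyadic decomposition $\mathcal{B}_k = \mathcal{A}_2 \cap [2^k, 2^{k+1})$ together with the discrete power-sum inequality (Lemma~\ref{lem:71}), which compares $\sum 2^{k\eta}b_k$ and $(\sum 2^{k\eta\lambda}b_k)^{1/\lambda}$ for $0\le b_k\le 1$. That lemma uses $2\a+\b > -1$ in an essential way, and there is no route to it via rearrangement. Your proposal never confronts this, so it misses a core step.

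Second, your partition of $F$ into $\{z \lesssim x\}$, $\{x \lesssim z \lesssim (t-x)/2\}$, $\{z \simeq t-x\}$ does not respect the singularity of $[t^2-(x+z)^2]^{\sigma-1}$ at $z=t-x$: when $t$ is close to $x$ (so $t-x \ll x$), that singularity sits inside your ``small-$z$ zone'' $z\lesssim x$, and the claim that ``both kernel factors are of size $t^{2\sigma-2}$'' there fails. As a related symptom, $N_{2\a+2}$ involves an average over the full interval $(0,t)$, whereas your small-$z$ zone only sees $z\lesssim x$; matching the two requires the region where $t\ge 3x$ and $z<(t-x)/2$, in which $z$ can be comparable to $t$, not just to $x$. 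The paper's split is first in $t$ (namely $x<t<3x$ giving $F_1$, and $t\ge 3x$ giving $F_2$) and only then in $z$; this separation is what keeps the singular point $z=t-x$ on the correct side of the cut. You would need to adopt a $t$-based split (or an equivalent device) before the pointwise kernel estimates you propose can be made rigorous.
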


In order to prove Lemma~\ref{lem:DRgen} we need the following technical result.

\begin{lem} \label{lem:71}
Let $\eta > 0$ and $\lambda \ge 1$ be fixed. Then
\begin{align} \label{R1.1}
\Big( \sum_{k\in \ZZ} 2^{k\eta} b_k \Big)^\lambda
\lesssim
\sum_{k\in \ZZ} 2^{k\eta \lambda} b_k,
\end{align}
uniformly in $0 \le b_k \le 1$, $k \in \ZZ$.
\end{lem}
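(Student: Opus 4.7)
Write $S := \sum_k 2^{k\eta} b_k$ and $T := \sum_k 2^{k\eta\lambda} b_k$; the aim is to show $S^\lambda \lesssim T$, with implicit constant depending only on $\eta$ and $\lambda$. I will first dispose of the extremal cases: if $S = 0$ the claim is trivial, and if $S = \infty$ one checks that $T = \infty$ too (for $k \ge 0$ we have $2^{k\eta} b_k \le 2^{k\eta\lambda} b_k$ because $b_k \le 1$ and $\lambda \ge 1$, while the part of $S$ coming from $k<0$ is dominated by the convergent geometric series $\sum_{k<0} 2^{k\eta}$). The main argument therefore only needs to treat $0 < S < \infty$.

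The plan is a single dyadic-truncation step that uses each hypothesis exactly once: $b_k \le 1$ will let me control a ``left tail'' by a geometric series, and $\lambda \ge 1$ will let me trade a pointwise factor $2^{k\eta(\lambda-1)}$ for $S^{\lambda-1}$. I will fix $K_0 \in \ZZ$ with $2^{K_0 \eta} \le S < 2^{(K_0 + 1)\eta}$, and choose an integer $m_0$ depending only on $\eta$ so that
$$
\sum_{k \le K_0 - m_0} 2^{k\eta} b_k \;\le\; \sum_{k \le K_0 - m_0} 2^{k\eta} \;=\; \frac{2^{(K_0 - m_0 + 1)\eta}}{2^\eta - 1} \;\le\; \frac{S}{4};
$$
the first inequality is where $b_k \le 1$ is used and the second is the geometric sum together with $2^{K_0 \eta} \le S$. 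This forces the complementary sum to satisfy $\sum_{k > K_0 - m_0} 2^{k\eta} b_k \ge 3S/4$.

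Next, on the range $k > K_0 - m_0$ I will factor $2^{k\eta} = 2^{k\eta(1-\lambda)} \cdot 2^{k\eta\lambda}$. Since $\lambda \ge 1$, the map $k \mapsto 2^{k\eta(1-\lambda)}$ is non-increasing, so it is bounded above by its value at $k = K_0 - m_0 + 1$. Pulling this constant out of the sum gives
$$
\frac{3S}{4} \;\le\; 2^{(K_0 - m_0 + 1)\eta(1-\lambda)} \sum_{k > K_0 - m_0} 2^{k\eta\lambda} b_k \;\le\; 2^{(K_0 - m_0 + 1)\eta(1-\lambda)}\, T.
$$
Rearranging and using $2^{K_0 \eta} \simeq S$ together with $\lambda - 1 \ge 0$ yields $T \gtrsim S^{\lambda - 1} \cdot S = S^\lambda$, which is the desired bound.

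The only real difficulty is recognising the correct truncation point. Neither H\"older nor Jensen applied directly to the whole sum can work, because the geometric series $\sum_k 2^{k\eta}$ diverges at $+\infty$; the hypothesis $b_k \le 1$ is indispensable and is precisely what allows the truncation at the scale $2^{K_0 \eta} \simeq S$ to absorb the left tail. Once this scale is identified, the remainder of the argument is a one-line application of $\lambda \ge 1$.
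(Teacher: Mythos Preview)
Your proof is correct and takes a genuinely different route from the paper's. The paper proceeds in two stages: first it establishes \eqref{R1.1} for integer $\lambda$ by induction (showing $\sum_{k,l} 2^{k\eta\lambda}b_k\,2^{l\eta}b_l \lesssim \sum_k 2^{k\eta(\lambda+1)}b_k$ via a split into $l\le k$ and $k\le l$, each half controlled by the geometric tail $\sum 2^{j\eta}\lesssim 2^{k\eta}$), and then interpolates down to any real $\tilde\lambda\in(1,\lambda)$ by a H\"older argument that reduces the exponent $\tilde\lambda$ back to the already-known integer case. Your argument, by contrast, is a single-step dyadic pigeonholing: you locate the scale $K_0$ at which $2^{K_0\eta}\simeq S$, use $b_k\le 1$ to discard the left tail below $K_0-m_0$, and on the surviving range exploit the monotonicity of $2^{k\eta(1-\lambda)}$ (i.e.\ $\lambda\ge 1$) to pull out a factor comparable to $S^{1-\lambda}$. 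This is more direct and treats all $\lambda\ge 1$ uniformly, avoiding the integer/non-integer dichotomy; the paper's approach, on the other hand, makes the combinatorial mechanism behind the inequality (pairwise interaction of dyadic scales) more transparent and generalises readily to products of several sums.
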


\begin{proof}
We first prove \eqref{R1.1} for $\lambda \in \NN \setminus \{0\}$ and $\eta > 0$.
Observe that for $\lambda = 1$ there is nothing to prove. Further, using an induction argument it suffices to prove that 
\begin{align*} 
\sum_{k, l\in \ZZ} 2^{k\eta \lambda } b_k 2^{l\eta} b_l
\lesssim
\sum_{k\in \ZZ} 2^{k\eta (\lambda+1) } b_k, 
\qquad 0 \le b_k \le 1, \quad k \in \ZZ.
\end{align*}
This, however, follows thanks to the assumption that $\eta > 0$ and the estimates
\begin{align*} 
\sum_{\substack{ l \in \ZZ \\ l \le k}}
2^{l\eta} b_l
& \le 
\sum_{\substack{ l \in \ZZ \\ l \le k}}
2^{l\eta} 
\simeq 
2^{k\eta}, \qquad k \in \ZZ, \\
\sum_{\substack{ k \in \ZZ \\ k \le l}}
2^{k\eta \lambda } b_k
& \le 
\sum_{\substack{ k \in \ZZ \\ k \le l}}
2^{k\eta \lambda }
\simeq 
2^{l\eta \lambda }, \qquad l \in \ZZ.
\end{align*}

To finish the proof of Lemma~\ref{lem:71}, it suffices to show that if \eqref{R1.1} holds true for some
$\lambda > 1$ and any $\eta > 0$, then it holds true also for any $1< \widetilde{\lambda} < \lambda$  and any $\eta > 0$. 
To justify this, we define 
$\gamma := (\lambda - \widetilde{\lambda})/(\lambda - 1) \in (0,1)$ and
$p:= \widetilde{\lambda} \gamma^{-1} > 1$. Using H\"older's inequality we get
\begin{align*} 
\Big( \sum_{k\in \ZZ} 2^{k\eta} b_k \Big)^{\widetilde{\lambda}}
=
\Big( \sum_{k\in \ZZ} 2^{k\eta \gamma } 2^{k\eta (1 - \gamma) } b_k
\Big)^{\widetilde{\lambda}}
\le 
\Big( \sum_{k\in \ZZ} 2^{k\eta \widetilde{\lambda} } b_k \Big)^{\gamma}
\Big( \sum_{k\in \ZZ} 2^{k\eta (1 - \gamma) p'} b_k \Big)^{\widetilde{\lambda}/p'}.
\end{align*}
Therefore, it is enough to show that 
\begin{align*} 
\Big( \sum_{k\in \ZZ} 2^{k\eta (1 - \gamma) p'} b_k \Big)^{\widetilde{\lambda}/(1-\gamma)p'}
\lesssim
\sum_{k\in \ZZ} 2^{k\eta \widetilde{\lambda}} b_k.
\end{align*}
This, however, follows from our assumption, since $\widetilde{\lambda}/(1-\gamma)p' = \lambda$.
\end{proof}

\begin{proof}[{Proof of Lemma \ref{lem:DRgen}, the $\Psi_{*,E}^{\a,\b}$ part}]
Denote
$$
A= |x-t|, \qquad B = x+t
$$
and observe that
\begin{align} \label{ctr17gen}
\Psi_{*,E}^{\alpha,\beta} \ind{\mathcal{A}} (x) 
\simeq
x^\beta
\sup_{t>0} \frac{1}{(B^2-A^2)^{2\alpha + 2 \beta}} 
\int_A^B \Big[ \big(z^2-A^2\big)\big(B^2-z^2\big) \Big]^{\alpha + \beta - 1/2}
z^{1 - \beta} \ind{\mathcal{A}} (z) \, dz.
\end{align}
It is not hard to see that our task of estimating $\Psi_{*,E}^{\alpha,\beta}$ reduces to showing that
\begin{align} \nonumber
& \frac{1}{(B^2-A^2)^{2\alpha + 2 \beta}} 
\int_A^B \Big[ \big(z^2-A^2\big)\big(B^2-z^2\big) \Big]^{\alpha + \beta - 1/2}
z^{1 - \beta} \ind{\mathcal{A}} (z) \, dz \\ \label{R3.2}
& \qquad \qquad \lesssim
\bigg( \frac{1}{B^2-A^2} \int_A^{B} 
z^{1 - \beta/(\alpha + \beta + 1/2)} 
\ind{\mathcal{A}} (z) \, dz \bigg)^{\alpha + \beta + 1/2},
\qquad 0 \le A < B < \infty.
\end{align}
Indeed, having \eqref{R3.2} we proceed by splitting the supremum into $t \le 2x$ and $t > 2x$.
In the first case $A \le x$ and we get the control by 
$\big[E_{2,\beta/(\alpha + \beta + 1/2)}\ind{\mathcal{A}}(x)\big]^{\alpha + \beta + 1/2}$.
On the other hand, if $t > 2x$, then $z \simeq t$ and we get the control of this part of the maximal operator
by $\big[R_{\beta/(\alpha + \beta + 1/2)}\ind{\mathcal{A}}(x)\big]^{\alpha + \beta + 1/2}$. The conclusion follows.

It remains to prove \eqref{R3.2}.
Let us focus first on the subintegral over $I_1 = (A,(A+B)/2) \cap \mathcal{A}$.
For $z \in I_1$ one has $B^2-z^2 \simeq B(B-A) \simeq B^2-A^2$ and 
$z^2 - A^2 \simeq z(z-A)$.
Consequently, proving this counterpart of \eqref{R3.2} reduces to showing the estimate
\begin{align} \label{R4.2}
\int_{I_1} (z-A)^{\alpha + \beta - 1/2} z^{\alpha + 1/2} \, dz 
\lesssim
\bigg( \int_{I_1}
z^{(\alpha + 1/2)/(\alpha + \beta + 1/2)} \, dz \bigg)^{\alpha + \beta + 1/2},
\qquad 0 \le A < B < \infty.
\end{align}

To proceed, denote $l_1 = |I_1|$. We consider two cases.

\noindent \textbf{Case 1:} $A \ge B/100$.
In this case we have $A \simeq B$ and $z \simeq B$ for $z \in I_1$.
Therefore, showing \eqref{R4.2} reduces to proving
\begin{align} \label{R5.1}
\int_{I_1} (z-A)^{\alpha + \beta - 1/2} \, dz 
\lesssim
l_1^{\alpha + \beta + 1/2},
\qquad 0 \le A < B < \infty.
\end{align}
Since the function
$$
z \mapsto (z-A)^{\alpha + \beta - 1/2}, \qquad z > A,
$$
is decreasing, we see that
$$
\texttt{LHS}\eqref{R5.1} 
\le
\int_A^{A+l_1} (z-A)^{\alpha + \beta - 1/2} \, dz  
\simeq l_1^{\alpha + \beta + 1/2}
= 
\texttt{RHS}\eqref{R5.1}.
$$
This gives \eqref{R5.1} and finishes Case 1.

\noindent \textbf{Case 2:} $A < B/100$.
We aim at the same bound as in the other cases.
We split the first integral in \eqref{R4.2} into two parts, say $J_1$ and $J_2$, which correspond, respectively,
to integration over $\mathcal{A}_1 = (A, 10A) \cap I_1$ and $\mathcal{A}_2 = (10A,(A+B)/2)\cap I_1$.
Observe that the desired bound for $J_1$ follows from \eqref{R5.1} by taking there $B= 19A$. So it remains to deal with $J_2$.

Notice that $z-A \simeq z$ for $z \ge 10A$, thus we get
$$
J_2 \simeq \int_{\mathcal{A}_2} z^{2\alpha + \beta} \, dz.
$$
Then our task reduces to showing that
\begin{align} \label{id:72}
\int_{\mathcal{A}_2} z^{2\alpha + \beta} \, dz
\lesssim
\bigg( \int_{\mathcal{A}_2}
z^{(\alpha + 1/2)/(\alpha + \beta + 1/2)} \, dz \bigg)^{\alpha + \beta + 1/2},
\end{align}
uniformly in measurable subsets $\mathcal{A}_2$ of $\mathbb{R}_+$.
Denote $\mathcal{B}_k = \mathcal{A}_2 \cap [2^k,2^{k+1})$ and $b_k = \abs{\mathcal{B}_k} 2^{-k} \in [0,1]$, $k\in \ZZ$,
and notice that the above is equivalent to checking that
\begin{align*} 
\sum_{k\in \ZZ} 2^{k(2\alpha + \beta + 1)} b_k 
\lesssim
\Big( \sum_{k\in \ZZ} 2^{k (\alpha + 1/2)/(\alpha + \beta + 1/2) + k} b_k \Big)^{\alpha + \beta + 1/2}, 
\qquad 0 \le b_k \le 1.
\end{align*}
But this follows from Lemma~\ref{lem:71} specified to $\eta = 2\alpha + \beta + 1 > 0$ and $\lambda = 1/(\alpha + \beta + 1/2) > 1$. 
This concludes the analysis related to Case 2, hence also the analysis of the subintegral over $I_1$.

Next, consider the complementary subintegral over $I_2 = ((A+B)/2,B) \cap \mathcal{A}$.
For $z \in I_2$ one has $z^2-A^2 \simeq B(B-A)\simeq B^2-A^2$, $B^2 - z^2 \simeq B(B-z)$ and $z \simeq B$.
Denote $l_2 = |I_2|$ and observe that our task of proving this counterpart of \eqref{R3.2} is equivalent to showing that
\begin{align} \label{R4.1}
\int_{I_2} (B-z)^{\alpha + \beta - 1/2} \, dz
\lesssim
l_2^{\alpha + \beta + 1/2},
\end{align}
uniformly in $B > 0$ and any measurable subset $I_2$ of $(0,B)$.

Since the function
$$
z \mapsto (B-z)^{\alpha + \beta - 1/2}, \qquad z \in (0,B),
$$
is increasing, we get
\begin{align*}
\texttt{LHS}\eqref{R4.1} 
\le 
\int_{B - l_2}^B (B-z)^{\alpha + \beta - 1/2} \, dz
\simeq
l_2^{\alpha + \beta + 1/2}
=
\texttt{RHS}\eqref{R4.1}.
\end{align*}
This gives \eqref{R4.1}.

The proof of the bound for $\Psi_{*,E}^{\a,\b}\ind{\mathcal{A}}$ in Lemma~\ref{lem:DRgen} is finished.
\end{proof}

\begin{proof}[Proof of Lemma~\ref{lem:DRgen}, the $\Psi_{*,F}^{\a,\b}$ part]
In consistence with the notation from \cite{CNR2} (see Figure~\ref{fig_reg}), we have
\begin{align*}
\Psi_{*,F}^{\alpha,\beta} \ind{\mathcal{A}} (x) 
& \le 
\Psi_{*,F_1}^{\alpha,\beta} \ind{\mathcal{A}} (x)
+
\Psi_{*,F_2'}^{\alpha,\beta} \ind{\mathcal{A}} (x) 
+
\Psi_{*,F_2''}^{\alpha,\beta} \ind{\mathcal{A}} (x),
\end{align*}
where
\begin{align*}
\Psi_{*,F_1}^{\alpha,\beta} f (x) 
& =
\sup_{x < t < 3x} \frac{1}{t^{2\alpha + 2\beta}} \int_0^{t-x} \big[ t^2-(x-z)^2\big]^{-\a-\frac12}
	\big[t^2-(x+z)^2\big]^{\a+\b-\frac12} z^{2\a+1} f(z) \, dz, \\
\Psi_{*,F_2'}^{\alpha,\beta} f (x) 
& =
\sup_{t \ge 3x} \frac{1}{t^{2\alpha + 2\beta}} \int_0^{(t-x)/2} \big[ t^2-(x-z)^2\big]^{-\a-\frac12}
	\big[t^2-(x+z)^2\big]^{\a+\b-\frac12} z^{2\a+1} f(z) \, dz, \\
\Psi_{*,F_2''}^{\alpha,\beta} f (x) 
& =
\sup_{t \ge 3x}
\frac{1}{t^{2\alpha + 2\beta}} \int_{(t-x)/2}^{t-x} \big[ t^2-(x-z)^2\big]^{-\a-\frac12}
	\big[t^2-(x+z)^2\big]^{\a+\b-\frac12} z^{2\a+1} f(z) \, dz.
\end{align*}
It suffices to show that 
\begin{align} \label{T3}
\Psi_{*,F_2'}^{\alpha,\beta} \ind{\mathcal{A}} (x) 
& \lesssim
N_{2\alpha + 2}  \ind{\mathcal{A}} (x), \\ \label{T5.1}
\Psi_{*,F_2''}^{\alpha,\beta} \ind{\mathcal{A}} (x) 
& \lesssim
T_{1 - \beta}  \ind{\mathcal{A}} (x) 
+
\big[T_{(\alpha +1/2)/(\alpha + \beta + 1/2)} \ind{\mathcal{A}}(x)\big]^{\alpha + \beta + 1/2}, \\ \label{T8.1}
\Psi_{*,F_1}^{\alpha,\beta} \ind{\mathcal{A}} (x) 
& \lesssim
\big[H_{(\alpha +1/2 + (1-\beta)\wedge 0)/(\alpha + \beta + 1/2) + 1}\ind{\mathcal{A}}(2x)\big]^{\alpha + \beta + 1/2}.
\end{align}

Notice that 
\begin{align*} 
\Psi_{*,F_2'}^{\alpha,\beta} f (x) 
\simeq
\sup_{t \ge 3x} t^{-2\alpha - 2} \int_0^{(t-x)/2} 
z^{2\a+1} f(z) \, dz
\le
N_{2\alpha + 2}  f (x),
\end{align*}
and \eqref{T3} follows.

Next, we deal with $\Psi_{*,F_2''}^{\alpha,\beta}$.
We have 
\begin{align*} 
\Psi_{*,F_2''}^{\alpha,\beta} f (x) 
& \simeq
\sup_{t \ge 3x} t^{-\beta}  \int_{(t-x)/2}^{t-x} (t+x - z)^{-\alpha - 1/2}
(t - x - z)^{\alpha + \beta - 1/2} f(z) \, dz \\
& \simeq
\sup_{t \ge 2x} t^{-\beta} \int_{t/2}^{t} 
(t - z + x)^{-\alpha - 1/2}
(t - z)^{\alpha + \beta - 1/2} f(z) \, dz \\
& =: J_1 f (x) + J_2 f(x),
\end{align*}
where $J_1$ and $J_2$ correspond to the integration over 
$(t/2,t-x)$ and $(t-x,t)$, respectively. It is straightforward to see that
\begin{align*} 
J_1 f (x) 
& \simeq
\sup_{t \ge 2x} t^{-\beta} \int_{t/2}^{t-x} 
(t - z + x)^{\beta - 1} f(z) \, dz 
\le
\sup_{t \ge 2x} t^{-\beta} \int_{t/2}^{t} 
(t - z + x)^{\beta - 1} f(z) \, dz \\
& \simeq
T_{1-\beta} f(x).
\end{align*}
Therefore, in order to prove \eqref{T5.1} it suffices to show that
\begin{align} \label{T6.1}
J_2 \ind{\mathcal{A}}(x)
\lesssim
\big[T_{(\alpha +1/2)/(\alpha + \beta + 1/2)} \ind{\mathcal{A}}(x)\big]^{\alpha + \beta + 1/2}.
\end{align}
Further, since 
\begin{align*} 
J_2 \ind{\mathcal{A}}(x)
\simeq
\sup_{t \ge 2x} t^{-\beta} \int_{t - x}^{t} 
x^{-\alpha - 1/2}
(t - z)^{\alpha + \beta - 1/2} \ind{\mathcal{A}} (z) \, dz,
\end{align*}
we see that proving \eqref{T6.1} reduces to justifying that
\begin{align} \label{T7.1}
\int_{t - x}^{t}
(t - z)^{\alpha + \beta - 1/2} \ind{\mathcal{A}} (z) \, dz
\lesssim
\bigg( \int_{t - x}^{t} \ind{\mathcal{A}} (z) \, dz \bigg)^{\alpha + \beta + 1/2}, 
\qquad t \ge 2x, \quad \mathcal{A} \subseteq \RR_+.
\end{align}

The last estimate is relatively easily verified. Indeed, $\alpha + \beta - 1/2 < 0$, so the function
$$
z \mapsto (t-z)^{\alpha + \beta - 1/2}, \qquad z \in (0,t),
$$
is increasing. Denoting $l := \abs{(t-x,t) \cap \mathcal{A}}$, we see that 
\begin{align*} 
\texttt{LHS}\eqref{T7.1}
\le 
\int_{t - l}^{t}
(t - z)^{\alpha + \beta - 1/2} \, dz
=
\int_0^{l} z^{\alpha + \beta - 1/2} \, dz
\simeq
l^{\alpha + \beta + 1/2}
=
\texttt{RHS}\eqref{T7.1}.
\end{align*}
This gives \eqref{T7.1}, thus also \eqref{T5.1}.

Finally, we prove \eqref{T8.1}. We have 
\begin{align*} 
\Psi_{*,F_1}^{\alpha,\beta} f (x) 
\simeq
\sup_{x < t < 3x} x^{-2\alpha - \beta - 1} (t-x)^{-\alpha - 1/2} 
\int_0^{t-x} (t-x-z)^{\alpha + \beta - 1/2} z^{2\alpha + 1}
	 f(z) \, dz.
\end{align*}
Therefore, in order to prove \eqref{T8.1} it is enough to check that
\begin{align} \nonumber
& \sup_{x < t < 3x} x^{(\beta \wedge 1) - \beta} (t-x)^{-\alpha - 1/2} 
\int_0^{t-x} (t-x-z)^{\alpha + \beta - 1/2} z^{2\alpha + 1}
\ind{\mathcal{A}} (z) \, dz \\ \label{T9.1}
& \qquad \qquad \lesssim
\bigg( \int_{0}^{2x} 
z^{\frac{2\alpha + (\beta \wedge 1) + 1}{\alpha + \beta + 1/2} -1}
\ind{\mathcal{A}} (z) \, dz \bigg)^{\alpha + \beta + 1/2}, 
\qquad x>0, \quad \mathcal{A} \subseteq \RR_+.
\end{align}
To justify the last estimate, it is convenient to split the set of integration in the integral on the left-hand side
into subintervals $(0, (t-x)/2)$ and $((t-x)/2, t-x)$, denoting the corresponding operators by $P_1$ and $P_2$, respectively.

We first deal with $P_2$. Observe that
\begin{align*} 
P_2 \ind{\mathcal{A}} (x)
\simeq
\sup_{x < t < 3x} x^{(\beta \wedge 1) - \beta} (t-x)^{\alpha + 1/2} 
\int_{(t-x)/2}^{t-x} (t-x-z)^{\alpha + \beta - 1/2} 
\ind{\mathcal{A}} (z) \, dz.
\end{align*}
Denote $l_1 := \abs{((t-x)/2, t-x) \cap \mathcal{A}}$. Since  the function
$$
z \mapsto (t-x-z)^{\alpha + \beta - 1/2}, \qquad z \in (0,t-x),
$$
is increasing, we get
\begin{align*} 
\int_{(t-x)/2}^{t-x} (t-x-z)^{\alpha + \beta - 1/2} 
\ind{\mathcal{A}} (z) \, dz 
\le 
\int_{t-x - l_1}^{t-x} (t-x-z)^{\alpha + \beta - 1/2} \, dz
\simeq 
l_1^{\alpha + \beta + 1/2}.
\end{align*}
Using this we get
\begin{align*} 
P_2 \ind{\mathcal{A}} (x)
\lesssim
\sup_{x < t < 3x} x^{(\beta \wedge 1) - \beta} 
(t-x)^{ \beta - (\beta \wedge 1)} 
\bigg( \int_{(t-x)/2}^{t-x} 
z^{\frac{2\alpha + (\beta \wedge 1) + 1}{\alpha + \beta + 1/2} -1}
\ind{\mathcal{A}} (z) \, dz \bigg)^{\alpha + \beta + 1/2}.
\end{align*}
Consequently, thanks to the fact that $\beta - (\beta \wedge 1) \ge 0$, we see that the right-hand side above is dominated
by the right-hand side of \eqref{T9.1}. Thus the required bound for $P_2$ follows.

Next, we focus on $P_1$. Notice that in this case our problem is to check that
\begin{align} \nonumber
& \sup_{x < t < 3x} x^{(\beta \wedge 1) - \beta} (t-x)^{\beta - 1} 
\int_0^{(t-x)/2} z^{2\alpha + 1}
\ind{\mathcal{A}} (z) \, dz \\ \label{T10.2}
& \qquad \qquad \lesssim
\bigg( \int_{0}^{2x} 
z^{\frac{2\alpha + (\beta \wedge 1) + 1}{\alpha + \beta + 1/2} -1}
\ind{\mathcal{A}} (z) \, dz \bigg)^{\alpha + \beta + 1/2}, 
\qquad x>0, \quad \mathcal{A} \subseteq \RR_+.
\end{align}
If $\beta \le 1$, then 
\begin{align*} 
\texttt{LHS}\eqref{T10.2} 
\lesssim
\int_0^{(t-x)/2} z^{2\alpha + \beta}
\ind{\mathcal{A}} (z) \, dz.
\end{align*}
Using now \eqref{id:72} we get the conclusion.
It remains to consider $\beta > 1$. We have 
$x^{(\beta \wedge 1) - \beta} (t-x)^{\beta - 1} \lesssim 1$ and proceeding as in the proof of \eqref{id:72}
we reduce the task to checking that
\begin{align*} 
\sum_{k\in \ZZ} 2^{k(2\alpha + 2)} b_k 
\lesssim
\Big( \sum_{k\in \ZZ} 2^{k (2\alpha + 2)/(\alpha + \beta + 1/2) } b_k \Big)^{\alpha + \beta + 1/2}, 
\qquad 0 \le b_k \le 1.
\end{align*}
This, however, follows from Lemma~\ref{lem:71} specified to 
$\lambda = (\alpha + \beta + 1/2)^{-1}$ and $\eta = 2\alpha + 2$.
Now \eqref{T10.2} follows and this finishes showing \eqref{T8.1}.

The proof of the bound for $\Psi_{*,F}^{\a,\b}\ind{\mathcal{A}}$ in Lemma~\ref{lem:DRgen} is complete.
\end{proof}

We are now in a position to justify the sufficiency parts in Theorem \ref{thm:weak} in the considered case
$\a + \b < 1/2$. In view of Theorem \ref{thm:strong} (its already proved sufficiency part, to be precise),
we are concerned only with restricted weak type boundedness.
More specifically, it is enough we prove the following.
\begin{propo} \label{prop:wbd}
Assume that $\a > -1$ and $-1/2 < \a + \b < 1/2$ and $1 \le p < \infty$.
If Condition \eqref{cnnn} holds 
and
\begin{equation} \label{dcq}
-1 < \delta, \qquad -\b p \le \delta \le (2\a + (\b \wedge 1) + 1)p-1,
\end{equation}
then $M^{\a,\b}_*$ is of restricted weak type $(p,p)$ with respect to the measure space $(\mathbb{R}_+,x^{\delta}dx)$.
\end{propo}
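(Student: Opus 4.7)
The plan is to reduce the restricted weak type $(p,p)$ of $M^{\a,\b}_*$ on $(\RR_+, x^{\delta}dx)$ to mapping properties of the auxiliary operators from Section~\ref{sec:specaux}. Write $\mu := x^{\delta}dx$ and $\theta := \a+\b+\tfrac12\in(0,1)$ (the openness follows from $-1/2 < \a+\b < 1/2$). Since restricted weak type $(p,p)$ is tested on indicators $f=\ind{\mathcal{A}}$ with $\mathcal{A}\subset\RR_+$ of finite $\mu$-measure, I would first combine the kernel domination
$$
M^{\a,\b}_* f(x) \lesssim \Psi^{\a,\b}_{*,E}|f|(x) + \Psi^{\a,\b}_{*,F}|f|(x)
$$
from \cite[Theorem~2.2]{CNR2} with Lemma~\ref{lem:DRgen} applied to $\ind{\mathcal{A}}$, arriving at
$$
M^{\a,\b}_*\ind{\mathcal{A}}(x) \lesssim N_{2\a+2}\ind{\mathcal{A}}(x) + T_{1-\b}\ind{\mathcal{A}}(x) + \sum_j \bigl[S_j\ind{\mathcal{A}}(x)\bigr]^{\theta},
$$
where the $S_j$ run through $E_{2,\b/\theta}$, $R_{\b/\theta}$, $T_{(\a+1/2)/\theta}$, and the dilated Hardy operator $H_{(\a+1/2+(1-\b)\wedge 0)/\theta+1}$.

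The second ingredient is the elementary principle: if $T\ind{\mathcal{A}}(x)\lesssim [S\ind{\mathcal{A}}(x)]^\theta$ pointwise with $\theta\in(0,1)$, and if $S$ is of restricted weak type $(p\theta, p\theta)$ with respect to $\mu$, then $T$ is of restricted weak type $(p,p)$ with respect to $\mu$, since
$$
\mu\bigl(\{T\ind{\mathcal{A}}>\la\}\bigr)\le \mu\bigl(\{S\ind{\mathcal{A}}\gtrsim \la^{1/\theta}\}\bigr)\lesssim \la^{-p}\,\mu(\mathcal{A}).
$$
In the present range $\a+\b<1/2$ Condition~\eqref{cnnn} reads $1/p\le\theta$, equivalent to $p\theta\ge 1$ (in particular forcing $p>1$), so restricted weak type $(p\theta,p\theta)$ is a meaningful notion.

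In the third, main step I would verify each auxiliary operator against its lemma in Section~\ref{sec:specaux}. The linear terms $N_{2\a+2}$ and $T_{1-\b}$ are handled by Lemmas~\ref{lem:N2}(c) and~\ref{lem:Tchar}(c): the conditions $-1<\delta\le(2\a+2)p-1$ and $\delta>(-1)\vee[-\b p]$ (with the weakening $\delta\ge-\b p$ activating precisely when $\b<1/p$, via $\eta=1-\b>1/p'$) are all contained in~\eqref{dcq}. For the four $\theta$-powered terms, I apply Lemmas~\ref{lem:E2}(c), \ref{lem:R2}(c), \ref{lem:Tchar}(c), and~\ref{lem:H0}(c) at exponent $q=p\theta$. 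A short calculation---using the identities $(\a+1/2)/\theta-1=-\b/\theta$ in the $T$-case, $\eta q = -\b p$ in the $R$-case, $kq-\eta q - 1 = (2\a+\b+1)p-1$ in the $E$-case, and $\eta q = (2\a+(\b\wedge 1)+1)p$ in the $H$-case---shows that each resulting condition on $\delta$ is no stronger than~\eqref{dcq}. The indicator dilation in the $H$-term is harmless by scaling of $\mu$.

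The main obstacle I expect is the careful bookkeeping of the weakenings and strictenings in Lemmas~\ref{lem:H0}--\ref{lem:Tchar}: these are triggered at particular configurations ($\b=0$, $\b=-2\a-1$, $p\theta=1$, $\b<1/p$ versus $\b\ge1/p$, and so on), so for each combination one has to verify that the pair of hypotheses~\eqref{cnnn}--\eqref{dcq} implies the required hypothesis of the corresponding lemma. Some apparently problematic degenerate cases turn out to be vacuous---for example, $2\a+\b+1=0$ would force the upper bound in~\eqref{dcq} to be $\le -1$, incompatible with $\delta>-1$, so such parameters contribute nothing. Once this case analysis is completed and the distribution-function bounds are assembled additively, the proof is finished.
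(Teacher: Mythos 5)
Your proposal is correct and follows essentially the same route as the paper's proof: dominate $M^{\a,\b}_*$ by $\Psi^{\a,\b}_{*,E}+\Psi^{\a,\b}_{*,F}$, apply Lemma~\ref{lem:DRgen}, handle the linear terms $N_{2\a+2}$ and $T_{1-\b}$ directly, and treat each $\theta$-powered auxiliary operator via the distribution-function argument at exponent $q=p\theta$ using the corresponding characterization lemma. The paper also makes the preliminary reduction $2\a+\b>-1$ explicit (which, as you note, is forced by $\eqref{dcq}$), and attributes the powered-operator trick to the proof of~\cite[Theorem~1.1(b)]{DMO}; otherwise the structure, the parameter identities, and the case-vacuity observations you supply match the paper.
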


\begin{proof}
We may assume that $2\a + \b > -1$, since otherwise \eqref{dcq} is not satisfied.
In view of \cite[Theorem 2.2]{CNR2}, one has the control
$$
M^{\a,\b}_{*}f(x) \lesssim \Psi^{\a,\b}_{*}f(x) \le \Psi^{\a,\b}_{*,E}f(x) + \Psi^{\a,\b}_{*,F}f(x), \qquad x > 0.
$$
Combining this with Lemma~\ref{lem:DRgen}
we further obtain
\begin{align*} 
M^{\a,\b}_* \ind{\mathcal{A}} (x) 
& \lesssim 
\big[E_{2,\beta/(\alpha + \beta + 1/2)}\ind{\mathcal{A}}(x)\big]^{\alpha + \beta + 1/2} 
+ \big[R_{\beta/(\alpha + \beta + 1/2)}\ind{\mathcal{A}}(x)\big]^{\alpha + \beta + 1/2} \\
& \quad + 
N_{2\alpha + 2}  \ind{\mathcal{A}} (x) 
+
T_{1 - \beta}  \ind{\mathcal{A}} (x) 
+
\big[T_{(\alpha +1/2)/(\alpha + \beta + 1/2)} \ind{\mathcal{A}}(x)\big]^{\alpha + \beta + 1/2}  \\
& \quad + 
\big[H_{(\alpha +1/2 + (1-\beta)\wedge 0)/(\alpha + \beta + 1/2) + 1}\ind{\mathcal{A}}(2x)\big]^{\alpha + \beta + 1/2},
\end{align*}
uniformly in $x > 0$ and measurable subsets $\mathcal{A}$ of $\mathbb{R}_+$.
Two of the controlling operators appearing here without powers, namely $N_{2\a+2}$ and $T_{1-\b}$,
are immediately seen to be of restricted
weak type $(p,p)$, by virtue of Lemmas \ref{lem:N2} and \ref{lem:Tchar}.
Therefore, it remains to analyze the remaining part of the right-hand side above.

We now argue similarly as in the proof of \cite[Theorem 1.1(b)]{DMO}. 
Let $\Upsilon^{\a,\b}$ stand for any of the operators
$$
E_{2,\beta/(\alpha + \beta + 1/2)}, \quad R_{\beta/(\alpha + \beta + 1/2)}, \quad
T_{(\alpha +1/2)/(\alpha + \beta + 1/2)}, \quad H_{(\alpha +1/2 + (1-\beta)\wedge 0)/(\alpha + \beta + 1/2) + 1}
$$
and let $q = (\alpha + \beta + 1/2)p$ (notice that $q \ge 1$, by \eqref{cnnn}). Then, in order to finish the proof
it suffices to show that
\begin{align} \label{id:337}
\int_{\{x>0\,:\,[\Upsilon^{\a,\b} \ind{\mathcal{A}}(x)]^{\a + \b+1/2} >\lambda \}} z^{\delta}\, dz 
\lesssim
\lambda^{-p} \| \ind{\mathcal{A}} \|^p_{L^p(x^\delta dx)}, 
\end{align}
uniformly in $\lambda>0$ and measurable subsets $\mathcal{A}$ of $\mathbb{R}_+$.

To proceed, we invoke Lemma \ref{lem:H0} and Lemmas \ref{lem:R2}--\ref{lem:Tchar},
and infer that each of the operators $\Upsilon^{\a,\b}$ is of restricted weak type $(q,q)$
with respect to $(\mathbb{R}_{+},x^{\delta} dx)$ for $\delta$ under consideration.
Using this we can write
\begin{align*} 
\int_{\{x>0\,:\,[\Upsilon^{\a,\b} \ind{\mathcal{A}}(x)]^{\a + \b+1/2} >\lambda \}} z^{\delta}\, dz 
& =
\int_{\{x>0\,:\,\Upsilon^{\a,\b} \ind{\mathcal{A}}(x) >\lambda^{1/(\a + \b+1/2)} \}} z^{\delta}\, dz \\
& \lesssim
\lambda^{-q/(\a + \b+1/2)} \| \ind{\mathcal{A}} \|^q_{L^q(x^\delta dx)} \\
& =
\lambda^{-p} \| \ind{\mathcal{A}} \|^p_{L^p(x^\delta dx)}, 
\end{align*}
and \eqref{id:337} follows.
\end{proof}

\section{Proof of Theorems \ref{thm:strong} and \ref{thm:weak}, necessity parts} \label{sec:necpr}

The necessity part of Theorem \ref{thm:strong} is contained in \cite[Proposition 1.6]{CNR2}, except for
the case $p=1$. The following result complements \cite[Proposition 1.6]{CNR2} and also
gives the relevant negative results pertaining to the weak type and restricted weak type boundedness of $M^{\a,\b}_{*}$.
Altogether, this provides the necessity parts in Theorems \ref{thm:strong} and \ref{thm:weak}.

\begin{propo} \label{prop:necBC}
	Assume that $\a > -1$ and $\a + \b > -1/2$. Let $1 \le p < \infty$ and $\delta \in \mathbb{R}$.
	Then $M^{\a,\b}_{*}$ lacks the following mapping properties with respect to the space $(\mathbb{R}_+,x^{\delta}dx)$.
	\begin{itemize}
		\item[(a)]
		$M^{\a,\b}_{*}$ is not of strong type $(1,1)$.
		\item[(b)]
		$M^{\a,\b}_*$ is not of weak type $(p,p)$ when $p > 1$ and
		$$
		\bigg[\;\frac{1}p = \a + \b + \frac{1}2 \quad \textrm{or} \quad \delta = (2\a+\b+1)p-1 \quad \textrm{or} \quad \delta = (2\a+2)p-1
		\;\bigg].
		$$
		\item[(c)]
		$M^{\a,\b}_*$ is not of restricted weak type $(p,p)$ when
		\begin{align*}
			& \frac{1}p > \a + \b + \frac{1}2 \quad \textrm{or} \quad \delta > (2\a+\b+1)p-1 \quad \textrm{or} \quad \delta > (2\a+2)p-1 \\
			& \textrm{or} \quad \delta < -\b p \quad \textrm{or} \quad \delta \le - 1 \quad \textrm{or} \quad
			\big[p=1 \;\; \textrm{and}\;\; \a+\b=1/2 \;\; \textrm{and}\;\; \beta \notin \mathbb{Z}\big].
		\end{align*}
	\end{itemize}
\end{propo}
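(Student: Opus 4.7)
The plan is to dispatch the proposition case by case through explicit counterexamples, in parallel with the strategy of \cite[Proposition 1.6]{CNR2} and leveraging the sharp two-sided pointwise estimates on $K_t^{\a,\b}(x,z)$ from \cite[Section 3]{CNR}. The unified recipe is: for a forbidden parameter configuration, choose a nonnegative test function $f$, freeze a target $x$, and select $t$ so that $(t,x,z)$ falls into the subregion of $E$ or $F$ where $K_t^{\a,\b}(x,z)$ is known to be comparable to its peak on the support of $f$. The resulting lower bound on $M_t^{\a,\b}f(x)$, hence on $M^{\a,\b}_*f(x)$, is then tested against the mapping property to be refuted.

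For part (a), I would take $f=\ind{[1,2]}$ and place $(t,x,z)$ along the boundary of region $E_3$ where the kernel peaks, producing $M^{\a,\b}_*f(x)\gtrsim x^{-c}$ for $x$ large with $c=c(\a,\b)$ dictated by the kernel asymptotics of \cite[Section 3]{CNR}. For $\delta$ lying in the weak $(1,1)$ window of Corollary \ref{cor:incr}(a), this decay is slow enough to force $M^{\a,\b}_*f \notin L^1(x^\delta dx)$ while $f\in L^1(x^\delta dx)$; for $\delta$ outside that window, strong type $(1,1)$ fails immediately from the (independently proved) failure of restricted weak type $(1,1)$ given by part (c) specialized to $p=1$. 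The bulk of the conditions in (b) and (c) — namely the strict-inequality endpoints $\delta>(2\a+\b+1)p-1$, $\delta>(2\a+2)p-1$, $1/p>\a+\b+1/2$, together with the lower-bound failures $\delta<-\b p$ and $\delta\le -1$ in (c), and the three critical equalities in (b) — transplant from the strong-type counterexamples of \cite[Proposition 1.6]{CNR2}: the same characteristic-function tests give a divergence rate too fast for $L^{p,\infty}$ in the strict cases, and produce an extra logarithmic super-level set $\varepsilon^p\lambda^{-p}\log(1/\varepsilon)$ at the critical lines, which weak type prohibits but restricted weak type absorbs.

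The subtlest case is the logarithmic endpoint $p=1$, $\a+\b=1/2$, $\b\notin\mathbb{Z}$ in (c). Here the kernel itself carries a genuine logarithm, see \eqref{logKbnd}, so no single bump will suffice. The plan is to use $f=\ind{A}$ with $A$ a union of intervals spread across many dyadic scales, tuned so that the logarithmic contributions of $K_t^{\a,\b}(x,z)$ in regions $E_3$ and $F_2''$ reinforce on a common level set; this effectively inverts the positive estimates involving $R_{\a,\b}^{\log}$ and $S_{\a,\b}^{\log}$ from Section \ref{sec:specaux}, producing a distribution function that exceeds the $L^{1,\infty}(x^\delta dx)$ allowance.

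The main obstacle, running through every subcase, is the precise matching of regions: the counterexample must saturate exactly the region whose contribution drove the corresponding positive argument in Sections \ref{sec:Mproof} and \ref{sec:Wproof} ($E_3$ and $F_2''$ in the logarithmic and singular cases, $F_1$ or $F_2'$ in the regular case, and so on). Selecting $(x,t)$ so that the two-sided bounds of \cite[Section 3]{CNR} simultaneously apply and return matching powers and logarithms on the support of $f$, and checking that each test function genuinely belongs to the intended Lorentz space, is the technical heart of the proof.
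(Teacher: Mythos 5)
Your overall framework — reduce to kernel lower bounds on the regions where $K_t^{\a,\b}$ is comparable to an explicit power/logarithm, choose a test function, freeze $t$ appropriately, and test the lower bound against the mapping property — matches the paper's strategy, which passes through the auxiliary maximal operators $U_{*,i}^{\a,\b}$, $V_{*,i}^{\a,\b}$ and $\widetilde{U}_{*,2}^{\a,\b}$, $\widetilde{V}_{*,2}^{\a,\b}$ supported on the monotone subregions $E_{\varepsilon}$, $F_{\varepsilon}$. However, the specific counterexample you propose for part (a) has a genuine gap.

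For (a), you take $f=\ind{[1,2]}$ and claim that the tail estimate $M^{\a,\b}_*f(x)\gtrsim x^{-c}$, $c=c(\a,\b)$, forces $M^{\a,\b}_*f\notin L^1(x^\delta dx)$ for all $\delta$ in the weak $(1,1)$ window of Corollary \ref{cor:incr}(a). This is false. With $t=x-1$ the lower bound from region $E_3$ gives $c=2\a+\b+1$ (see item (c3) of the paper's Lemma~\ref{lem:C}), so $\int^\infty M^{\a,\b}_*f(x)\,x^{\delta}dx$ diverges only when $\delta\ge 2\a+\b$. The weak $(1,1)$ window is $-(\b\wedge 1)<\delta\le 2\a+(\b\wedge 1)$, so: when $\b<1$ the tail is integrable throughout the interior of the window, and when $\b\ge 1$ the tail is integrable for \emph{every} $\delta$ in the window. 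Appealing to (c) for the remaining $\delta$ does not help, since at $p=1$ the negation of (c) is precisely the weak $(1,1)$ window. Thus your argument does not disprove strong type $(1,1)$ for any $\delta$ where it is actually false but weak type holds. The paper avoids this by using $f(z)=\ind{(1,1+\xi)}(z)(z-1)^{-1}\bigl(\log\frac{2}{z-1}\bigr)^{-2}\in L^1(x^\delta dx)$, which produces a \emph{local} non-integrable singularity of order $(x-1)^{-1}\bigl(\log\frac{1}{x-1}\bigr)^{-1}$ near $x=1^+$, where the weight $x^\delta$ is bounded — so the conclusion holds for all $\delta\in\mathbb{R}$ at once. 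The point you miss is that a characteristic-function test cannot detect failure of strong type for a weak type $(1,1)$ operator unless the weight conspires at $0$ or $\infty$; you need a singular $f$ and a counterexample localized where the weight is irrelevant.

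Two smaller remarks: your heuristic for (b) — logarithmic super-level sets from characteristic-function tests distinguishing weak from restricted weak type — is plausible in spirit, but the paper instead chooses test functions such as $\ind{(1,2)}(z)(z-1)^{-\a-\b-1/2}/\log\frac{2}{z-1}$ which lie in $L^p(x^\delta dx)$ but are mapped to $+\infty$, which kills weak type outright without delicate level-set counting. And for the logarithmic endpoint (c4), since restricted weak type and weak type coincide at $p=1$ there is no need for a Cantor-like union of dyadic intervals; the same local singular profile as in (a), fed into the log kernel bound, already gives $\widetilde U_{*,2}^{\a,\b}f=\infty$ on a fixed interval.
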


To prove Proposition \ref{prop:necBC} we will give suitable counterexamples in a similar spirit as it was done in the
proof of \cite[Proposition 1.6]{CNR2}, see \cite[Section 4]{CNR2}.
Thus we first recall some facts and notation from \cite{CNR2}.

Consider the following integral operators, with positive kernels, acting on functions on~$\mathbb{R}_+$:
\begin{align*}
U_{t,1}^{\a,\b}f(x) & = \frac{x^{-2\a-\b}}{t^{2\a+2\b}} \int_{|t-x|}^{t+x} \Big( \big[(x+t)^2-z^2\big]
	\big[ z^2 - (x-t)^2\big] \Big)^{\a+\b-1/2} z^{1-\b} f(z)\, dz,\\
U_{t,2}^{\a,\b}f(x) & = \frac{x^{-\a-1/2}}{t^{2\a+2\b}} \int_{|t-x|}^{t+x} \big[ t^2-(x-z)^2 \big]^{\a+\b-1/2}
	z^{\a+1/2} f(z)\, dz, \\
\widetilde{U}_{t,2}^{\a,\b}f(x) & = \frac{x^{-\a-1/2}}{t^{2\a+2\b}} \int_{|t-x|}^{t+x} \big[ t^2-(x-z)^2 \big]^{\a+\b-1/2}
	\log\bigg( \frac{8xz}{(x+z)^2-t^2} \bigg) z^{\a+1/2} f(z)\, dz, \\
V_{t,1}^{\a,\b}f(x) & = \frac{\chi_{\{x<t\}}}{t^{2\a+2\b}} \int_0^{t-x} \big[ t^2-(x-z)^2\big]^{-\a-1/2}
	\big[t^2-(x+z)^2\big]^{\a+\b-1/2} z^{2\a+1} f(z)\, dz, \\
V_{t,2}^{\a,\b}f(x) & = \frac{\chi_{\{x<t\}}}{t^{2\a+2\b}} \int_0^{t-x} \big[t^2-(x-z)^2\big]^{\b-1} z^{2\a+1}f(z)\, dz, \\
\widetilde{V}_{t,2}^{\a,\b}f(x) & = \frac{\chi_{\{x<t\}}}{t^{2\a+2\b}} \int_0^{t-x} \big[t^2-(x-z)^2\big]^{\b-1}
	\log\bigg(2 \frac{t^2-(x-z)^2}{t^2-(x+z)^2} \bigg)z^{2\a+1}f(z)\, dz.
\end{align*}
Observe that for $\a+\b=1/2$ one has $U_{t,1}^{\a,\b} = U_{t,2}^{\a,\b}$, $V_{t,1}^{\a,\b} = V_{t,2}^{\a,\b}$, and 
\begin{equation} \label{domlog}
U_{t,2}^{\a,\b}f(x) \lesssim \widetilde{U}_{t,2}^{\a,\b}f(x), \qquad
V_{t,2}^{\a,\b}f(x) \lesssim \widetilde{V}_{t,2}^{\a,\b}f(x), \qquad f \ge 0, \quad x,t > 0.
\end{equation}

The six $U$ and $V$ operators correspond to the absolute bounds of the kernel $K_t^{\a,\b}(x,z)$ in \cite[Theorem 2.1]{CNR2}.
This kernel is in general supported in $E \cup F$, but for $-\b \in \mathbb{N}$ only in $E$.
The $U$ operators are related to $E$, while $V$ to $F$.
$U_{t,1}^{\a,\b}$ matches the cases when $-\b \in \mathbb{N}$ or $[\a+\b > 1/2 \; \textrm{and}\; 2\a+\b=0]$ or
$[\a+\b < 1/2 \;\textrm{and}\; -\b \notin \mathbb{N} \; \textrm{and}\; \a+1/2 \notin \mathbb{N}]$.
$U_{t,2}^{\a,\b}$ matches $[\a+\b > 1/2 \;\textrm{and}\; -\b \notin \mathbb{N} \;\textrm{and}\; 2\a+\b \neq 0]$
or $(\a,\b) = (-1/2,1)$ or $[\a+\b < 1/2 \;\textrm{and}\; \a+1/2 \in \mathbb{N}]$.
For $V_{t,i}^{\a,\b}$, $i=1,2$, we take into account $-\b \notin \mathbb{N}$ and then $V_{t,1}^{\a,\b}$ matches
$[\a + \b > 1/2 \;\textrm{and}\; 2\a+\b=0]$ or $[\a+\b < 1/2 \; \textrm{and}\; \b \neq 1]$, while $V_{t,2}^{\a,\b}$
matches $[\a+\b > 1/2 \; \textrm{and}\; 2\a+\b \neq 0]$ or $(\a,\b)=(-1/2,1)$ or $[\a+\b < 1/2 \;\textrm{and}\; \b=1]$.
Finally, $\widetilde{U}_{t,2}^{\a,\b}$ and $\widetilde{V}_{t,2}^{\a,\b}$ match the logarithmic case
$[\a+\b = 1/2 \;\textrm{and}\; \b \notin \mathbb{Z}]$.

Let $\varepsilon > 0$ and denote
\begin{align*}
E_{\varepsilon} & = \big\{ (t,x,z) \in E : t - |x-z| < \varepsilon \sqrt{xz}\;\;
	\textrm{or} \;\; x+z -t < \varepsilon \sqrt{xz} \big\}, \\
F_{\varepsilon} & = \big\{ (t,x,z) \in F : t - (x+z) < \varepsilon \sqrt{xz}\;\;
	\textrm{or} \;\; t >  \varepsilon^{-1} \sqrt{xz} \big\}.
\end{align*}
As explained in \cite[Section 4.2]{CNR2}, for each pair $\a,\b$ under consideration there exists
$\varepsilon = \varepsilon(\a,\b) > 0$ such that the kernel $K_t^{\a,\b}(x,z)$ does not change sign
in $E_{\varepsilon}$ and $F_{\varepsilon}$ and, moreover, $|K_t^{\a,\b}(x,z)|$ is comparable in
$E_{\varepsilon}$ and in $F_{\varepsilon}$ with the kernel of the corresponding operator $U$ and $V$.

Therefore, proving the unboundedness results for $M^{\a,\b}_{*}$ stated in Proposition \ref{prop:necBC} can
be transmitted to showing suitable unboundedness results for the maximal operators corresponding to the $U$ and $V$
operators, stated in Lemma \ref{lem:C} below,
provided that we assure that all triples $(t,x,z)$ involved in the counterexamples are located
either in $E_{\varepsilon}$ or in $F_{\varepsilon}$. This is indeed the case, as commented after the proof of Lemma \ref{lem:C}.
Note that, because of \eqref{domlog}, any unboundedness result in Lemma \ref{lem:C} for $U_{*,2}^{\a,\b}$ implies an analogous
result for its tilded variant, and analogously in case of the $V$ operators.

Denote $U_{*,1}^{\a,\b}f = \sup_{t>0}|U_{t,1}^{\a,\b}f|$ and similarly for the other $U$ and $V$ operators.
\begin{lema} \label{lem:C}
Let $\a > -1$, $\a + \b > -1/2$, $1 \le p < \infty$ and $\delta \in \mathbb{R}$.
Then the $U$ and $V$ operators possess the following unboundedness properties with respect to $(\mathbb{R}_+,x^{\delta}dx)$.
\begin{itemize}
\item[(a)]
$U_{*,i}^{\a,\b}$, $i=1,2$, are not of strong type $(1,1)$.
\item[(b1)]
$U_{*,i}^{\a,\b}$, $i=1,2$, are not of weak type $(p,p)$ when $1/p = \a+\b + 1/2 < 1$.
\item[(b2)]
$U_{*,i}^{\a,\b}$, $i=1,2$, are not of weak type $(p,p)$ when $p > 1$ and $\delta = (2\a+\b+1)p-1$.
\item[(b3)]
$V_{*,i}^{\a,\b}$, $i=1,2$, are not of weak type $(p,p)$ when $p > 1$ and $\delta = (2\a+2)p-1$.
\item[(c1)]
$U_{*,i}^{\a,\b}$, $i=1,2$, are not of restricted weak type $(p,p)$ when $1/p > \a + \b + 1/2$.
\item[(c2)]
$U_{*,i}^{\a,\b}$, $i=1,2$, are not of restricted weak type $(p,p)$ when $\delta < -\b p$.
\item[(c3)]
$U_{*,i}^{\a,\b}$, $i=1,2$, are not of restricted weak type $(p,p)$ when $\delta > (2\a+\b+1)p-1$.
\item[(c4)]
$\widetilde{U}_{*,2}^{\a,\b}$ is not of restricted weak type $(1,1)$ when $\a+\b =1/2$.
\item[(c5)]
$V_{*,i}^{\a,\b}$, $i=1,2$, are not of restricted weak type $(p,p)$ when $\delta \le -1$.
\item[(c6)]
$V_{*,i}^{\a,\b}$, $i=1,2$, are not of restricted weak type $(p,p)$ when $\delta > (2\a+2)p-1$.
\end{itemize}
\end{lema}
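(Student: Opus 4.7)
The plan is to give explicit counterexamples for each of items (a), (b1)--(b3), (c1)--(c6), leveraging the positivity of every $U$- and $V$-operator. In each case we restrict the supremum defining $U^{\a,\b}_{*,i}$ or $V^{\a,\b}_{*,i}$ to a carefully chosen single scale $t = t(x)$, so as to obtain a pointwise lower bound of sharp order, and then test this lower bound against a function (a bump or a characteristic function) tailored to the particular failure regime. Most of the constructions follow the template of \cite[Section 4]{CNR2}; the novelty is to adjust them so as to produce failures at the weak and restricted weak type level rather than only at the strong type level. Throughout, the triple $(t,x,z)$ selected by the construction must land in $E_\varepsilon$ or $F_\varepsilon$, so that the lower bound on the $U$-/$V$-operator translates into a lower bound on $M^{\a,\b}_*$.

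For the upper-$\delta$ failures (b2), (b3), (c3), (c6), we test with $f_N = \chi_{[N,2N]}$ and take $t = x$ (for $U$) or $t \simeq 2x$ (for $V$), with $x \gg N$. A direct computation gives
\[
U^{\a,\b}_{*,i} f_N(x) \gtrsim \frac{N^{2\a+\b+1}}{x^{2\a+\b+1}}, \qquad
V^{\a,\b}_{*,i} f_N(x) \gtrsim \frac{N^{2\a+2}}{x^{2\a+2}},
\]
and a level-set computation against the critical weight immediately violates the restricted weak type inequality for $\delta$ strictly past the endpoint. At the endpoint itself (items (b2), (b3)), the single-scale inequality is saturated rather than violated: both the weighted measure of the level set and $\|f_N\|^p_{L^p(x^\delta dx)}$ are of the same order in $N$. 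To force the weak type constant to blow up we therefore superpose countably many copies of $f_N$ at sparse scales $N_k \to \infty$ with carefully chosen amplitudes; the resulting overlap in the level sets of $U^{\a,\b}_{*,i} f$ produces an arbitrarily large multiplier.

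For the Bochner--Riesz-type failures (b1), (c1) and the logarithmic failure (c4), we instead concentrate $f = \chi_{[N, N(1+\eta)]}$ in a thin shell and take $x$ small; placing $t = N$ activates the kernel singularity at $z = t - x$, producing a gain of $(xN)^{\a+\b-1/2}$ in the $U$-kernel and an additional factor $\log(1/\eta)$ for $\widetilde{U}^{\a,\b}_{*,2}$. Sending $\eta \to 0$ and analyzing the resulting distribution function rules out the weak (resp.\ restricted weak) type estimate exactly in the claimed ranges. For (c2) and (c5), the weight singularities $\delta < -\b p$ and $\delta \le -1$ are detected with a bump concentrated near $x=0$: testing $f = \chi_{[1,2]}$ with $t$ of order one, the operator retains an $x^{-\b}$-type lower bound near $0$ (respectively remains of constant order on a neighborhood of $0$), which is not controllable by the allowed weight. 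Finally, the strong-type $(1,1)$ failure (a) follows by specializing the bump construction above to $p=1$: the $t=x$ lower bound $U^{\a,\b}_{*,i}\chi_{[1,2]}(x) \gtrsim x^{-(2\a+\b+1)}$ is non-integrable against $x^\delta dx$ for $\delta \ge 2\a+\b$, and combining it with the small-$x$ lower bound $U^{\a,\b}_{*,i}\chi_{[1,2]}(x) \gtrsim x^\b$ (from $t \simeq 3/2$) handles the remaining $\delta$.

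The main obstacle is bookkeeping across the nine items. Crucially, we must verify in each case that the triple $(t,x,z)$ employed for the lower bound actually lies in the region $E_{\varepsilon}$ or $F_{\varepsilon}$, since this is what ensures that the $U$- and $V$-operators are genuinely comparable to $|K^{\a,\b}_t|$. This forces us to keep a fixed multiplicative buffer between $t$ and the boundaries $|x-z|$, $x+z$, and (in the $F$-case) to stay away from $t \gg \sqrt{xz}$. The sharp-endpoint failures (b2), (b3), (c4) require a careful superposition over sparse scales (or a careful $\eta\to0$ limit), since single-scale bumps saturate the inequality; once the summations are set up, the remaining estimates are routine modifications of the arguments in \cite[Section 4]{CNR2}.
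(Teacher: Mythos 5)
Your proposal has a genuine gap in item (a). With $f=\ind{(1,2)}$ you compute $U^{\a,\b}_{*,i}f(x)\gtrsim x^{-(2\a+\b+1)}$ for large $x$ (non-integrable against $x^{\delta}dx$ iff $\delta\ge 2\a+\b$) and $U^{\a,\b}_{*,i}f(x)\gtrsim x^{\b}$ for small $x$ (non-integrable iff $\delta\le -\b-1$). These two ranges never cover all of $\RR$: the assumption $\a+\b>-1/2$ gives $-\b-1<2\a+\b$, so the interval $(-\b-1,\,2\a+\b)$ is always nonempty and your two lower bounds say nothing there. Indeed, the paper's sufficiency result shows $M^{\a,\b}_*$ is weak $(1,1)$ for many such $\delta$, so $U^{\a,\b}_{*,i}\ind{(1,2)}$ will typically \emph{be} integrable on this middle range; a single $\chi$-bump cannot carry the argument. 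The paper avoids this by localizing: it takes $f(z)=\ind{(1,1+\xi)}(z)(z-1)^{-1}\big(\log\tfrac{2}{z-1}\big)^{-2}$, $x\in(1,1+\xi)$, $t=2(x-1)\le x/2$, for which the operator reduces to a local Hardy--Littlewood average and blows up like $\tfrac{1}{x-1}\big(\log\tfrac{1}{x-1}\big)^{-1}$ near $x=1$. Since both $f$ and its image live near $x=1$ where $x^{\delta}$ is comparable to $1$, the construction is uniform in $\delta$ --- that is precisely the idea missing from your argument.

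There is also a more systematic difference in the endpoint items (b1), (b2), (b3), (c4). You propose sending $\eta\to 0$ in a thin shell or ``superposing countably many copies of $f_N$ at sparse scales''. Note that for a positive sublinear operator the naive superposition of examples that merely saturate the weak-type inequality tends to saturate it again rather than amplify the constant; your sketch does not explain what produces the gain. The paper instead fixes a \emph{single} test function with a logarithmically corrected power singularity (e.g.\ $f(z)=\ind{(0,1)}(z)z^{-2\a-\b-1}/\log\tfrac{2}{z}$ for (b2), and analogously for (b1), (b3), (c4)); this $f$ is in $L^p(x^{\delta}dx)$, yet the single-scale lower bound shows $U^{\a,\b}_{*,i}f\equiv\infty$ on a set of positive measure, so weak/restricted weak type fails outright. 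No limiting argument or superposition is needed. Finally, in (c2) your claimed ``$x^{-\b}$-type lower bound near $0$'' from testing $\ind{[1,2]}$ at small $x$ has a sign error (the actual order is $x^{\b}$), and the resulting level-set estimate does not force the sharp condition $\delta\ge-\b p$. The paper's counterexample instead sends the bump to infinity, $f_N=\ind{(N-1,N+1)}$, keeps $x\in(1,2)$ fixed, uses $t=N$ to get $U^{\a,\b}_{*,i}f_N(x)\gtrsim N^{-\b}$ there, and compares with $\|f_N\|_{L^p(x^{\delta}dx)}\simeq N^{\delta/p}$ as $N\to\infty$.
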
 
 
\begin{proof}
We shall present a counterexample for each item of the lemma.

\medskip
\noindent{\textbf{Item (a)}.}
Consider $f(z) = \ind{(1,1+\xi)}(z) (z-1)^{-1} ( \log\frac{2}{z-1})^{-2}$
and $x \in (1,1+\xi)$, for a fixed small $\xi > 0$. Notice that $f \in L^1(x^{\delta}dx)$ for any $\delta \in \mathbb{R}$.
Assume for the time being that $t < x/2$. Then we have (see \cite[(3.2)]{CNR2})
\begin{align*}
U_{t,1}^{\a,\b}f(x) & = \frac{x^{-2\a-\b}}{t^{2\a+2\b}} \int_{x-t}^{x+t}
	\Big( \big[ t^2-(x-z)^2\big] \big[ (x+z)^2-t^2\big] \Big)^{\a+\b-1/2} z^{1-\b} f(z)\, dz \\
& \gtrsim \frac{x^{-2\a-\b}}{t^{2\a+2\b}} \int_{x-t/2}^{x+t/2} t^{2\a+2\b-1} x^{2\a + 2\b -1} x^{1-\b} f(z)\, dz \\
& = \frac{1}t \int_{x-t/2}^{x+t/2} f(z)\, dz.
\end{align*}
In an analogous way we get the same lower bound for $U_{t,2}^{\a,\b}f(x)$.
Choosing now $t=2(x-1)$, we see that for $i=1,2$
\begin{align*}
U_{*,i}^{\a,\b}f(x) & \gtrsim \frac{1}{x-1} \int_1^{2x-1} (z-1)^{-1} \Big( \log\frac{2}{z-1}\Big)^{-2}\, dz \\
& \simeq \frac{1}{x-1} \Big( \log\frac{1}{x-1}\Big)^{-1}, \qquad x \in (1,1+\xi).
\end{align*}
But the last expression has a non-integrable singularity at $1^{+}$, so $U_{*,i}^{\a,\b}f \notin L^1(x^{\delta}dx)$
for $i=1,2$ and any $\delta \in \mathbb{R}$. The conclusion follows.

\medskip
\noindent{\textbf{Item (b1)}.}
We pick $f(z) = \ind{(1,2)}(z) (z-1)^{-\a-\b-1/2}\slash \log\frac{2}{z-1}$, see 
the proof of \cite[Lemma 4.1(a2)]{CNR2}. For $\a,\b$ and $p$ in question,
this function belongs to $L^p(x^{\delta}dx)$. However, taking $t=x-1$, for large $x$ we get
$$
U_{x-1,1}^{\a,\b} f(x)
	\gtrsim \frac{1}{x^{2\a+\b+1}} \int_1^{2} {(z-1)^{-1}\Big(\log\frac{2}{z-1}\Big)^{-1}}\, dz = \infty
$$
and similarly $U_{x-1,2}^{\a,\b} f(x) = \infty$.

\medskip
\noindent{\textbf{Item (b2)}.}
Define 
$f(z) = \ind{(0,1)}(z) z^{-2\a-\b-1}\slash \log\frac{2}{z}$ and observe that $f \in L^p(x^{\delta}dx)$
for $\a,\b$ and $p$ under consideration. 
On the other hand, taking $t=x$ and considering $x$ sufficiently large, we obtain
\begin{align*} 
U_{x,1}^{\a,\b} f(x)
\gtrsim
x^{-2\a-\b-1} \int_0^1 z^{-1} \bigg(\log\frac{2}{z} \bigg)^{-1} \, dz
= 
\infty
\end{align*}
and similarly $U_{x,2}^{\a,\b} f(x) = \infty$.

\medskip
\noindent{\textbf{Item (b3)}.}
Take $f(z)=\ind{(0,1)}(z)z^{-(2\alpha+2)}\slash \log\frac{2}{z} $, see the proof of \cite[Lemma 4.1(b1)]{CNR2}.
This function belongs to $L^p(x^{\delta}\,dx)$ for the considered $\a,\b$ and $p$.
Let $t=2x$ and $x$ be large.
We have the bound
$$
V_{2x,1}^{\a,\b}f(x) \gtrsim  \frac{1}{x^{2\a+2}} \int_0^{x/2} z^{2\a+1} f(z)\, dz
\ge \frac{1}{x^{2\a+2}} \int_0^1 z^{-1} \Big( \log\frac{2}z \Big)^{-1}\, dz = \infty, 
$$
and similarly we obtain $V_{2x,2}^{\a,\b}f(x) = \infty$.

\medskip
\noindent{\textbf{Item (c1)}.}
Taking $f_N(z) = \ind{(N-1,N+1)}(z)$ we see that for large $N$, $x \in [N/4,3N/4]$ and either
$t = x+N-1$ or $t = N+1-x$, we have
\begin{align*} 
U_{x+N-1,1}^{\a,\b} f_N(x)
& \gtrsim
N^{-\a-\beta-1/2} \int_{N-1}^{N+1} (z-N+1)^{\alpha + \beta -1/2} \, dz
\simeq
N^{-\a-\b -1/2}, \\
U_{N+1-x,2}^{\a,\b} f_N(x)
& \gtrsim
N^{-\a-\b -1/2} \int_{N-1}^{N+1} (N+1-z)^{\a+\b-1/2}
	\, dz
\simeq
N^{-\a-\b -1/2}.
\end{align*}
Notice that restricted weak type $(p,p)$ of $U_{*,i}^{\a,\b}$, $i=1, 2$, implies that 
\begin{align*} 
\sup_{\lambda > 0} \lambda 
\bigg( \int_{\{ N/4 < x < 3N/4\, :\, N^{-\alpha-\beta - 1/2 } > \lambda\}} 
x^{\delta} \, dx \bigg)^{1/p}
\lesssim
N^{\delta/p},
\end{align*}
uniformly in large $N$.
This forces
\begin{align*} 
N^{-\alpha -\beta - 1/2} N^{(\delta+1)/p}
\lesssim
N^{\delta/p}, \qquad N \; \textrm{large},
\end{align*}
which means that $1/p - (\a+\b+1/2) \le 0$.
The conclusion follows.

\medskip
\noindent{\textbf{Item (c2)}.}
We proceed similarly as in the proof of \cite[Lemma 4.1(a3)]{CNR2}.
Here instead of 
$f_N(z) = \ind{(N-1,N+1)}(z) z^{\b}$ we take 
$f_N(z) = \ind{(N-1,N+1)}(z)$
for $N$ large.
Letting $t=N$, for $x \in (1,2)$ we have $U_{*,i}^{\a,\b}f_N(x) \gtrsim N^{-\b}$, $i=1,2$.
Since $\|f_N\|_{L^p(x^{\delta}dx)} \simeq N^{\delta/p}$, the restricted weak type $(p,p)$ of
$U^{\a,\b}_{*,i}$, $i=1,2$, implies
$$
\sup_{\lambda > 0} \lambda \bigg( \int_{\{1 < x < 2 \,:\, N^{-\b} > \lambda\}} x^{\delta} \, dx\bigg)^{1/p} \lesssim N^{\delta/p},
$$
which gives $N^{-\beta} \lesssim N^{\delta /p}$ for large $N$. This means that necessarily $-\b p \le \delta$.

\medskip
\noindent{\textbf{Item (c3)}.}
Taking $f(z) = \ind{(1,2)}(z)$ and considering $t = x-1$ and large $x$, say $x > C$, we get
(as in the proof of \cite[Lemma 4.1(a1)]{CNR2}) $U_{x-1,1}^{\a,\b}f(x) \gtrsim x^{-2\a-\b-1}$ and the same bound for
$U_{x-1,2}^{\a,\b}f(x)$. Therefore, the restricted weak type $(p,p)$ of $U_{*,i}^{\a,\b}$, $i=1,2$, implies
\begin{equation} \label{id:711}
\sup_{\lambda > 0} \lambda 
\bigg( \int_{\{ x > C \,:\, x^{-2\a-\b-1} > \lambda\}} 
x^{\delta} \, dx \bigg)^{1/p}
\lesssim
1.
\end{equation}
We claim that this forces $\delta \le (2\a+\b+1)p-1$, which is what we need.
Indeed, if $2\a+\b+1 > 0$ then \eqref{id:711} implies
$\lambda^{1 -(\delta + 1)/[(2\alpha + \beta + 1)p]} \lesssim 1$ for small $\lambda$, which gives the conclusion.
When $2\a+\b + 1 < 0$ we consider $\lambda$ large in \eqref{id:711} and obtain
$\lambda^{ 1+ (\delta + 1)/[(-2\a-\b-1)p]} \lesssim 1$, and the conclusion again follows.
Finally, if $2\a + \b +1 = 0$ then \eqref{id:711} with $\lambda = 1/2$ shows that
$\int_C^{\infty} x^{\delta}dx < \infty$, i.e.\ $\delta < -1 = (2\a+\b + 1)p-1$.

\medskip
\noindent{\textbf{Item (c4)}.}
Recall that now $\a+\b = 1/2$.
Fix a small $\xi \in (0,1)$ and let
$f_{\xi}(z) = \ind{(1,1+\xi)}(z) (z-1)^{-1}
\big( \log\frac{2}{z-1} \big)^{-2}$. This function belongs to $L^1(x^{\delta}dx)$ for every $\delta \in \RR$.
However, taking $t=x+1$, for $x \in (1/2,3/4)$ we get
\begin{align*} 
\widetilde{U}_{x+1,2}^{\a,\b}f_{\xi}(x)
& \gtrsim 
\int_{1}^{1+\xi} 
	\log\bigg(\frac{8xz}{(x+z)^2-(x+1)^2} \bigg) 
(z-1)^{-1} \Big( \log\frac{2}{z-1} \Big)^{-2} \, dz \\
& \simeq
\int_{1}^{1+\xi} 
(z-1)^{-1} \Big( \log\frac{2}{z-1} \Big)^{-1} \, dz
= \infty.
\end{align*}
Since weak and restricted weak type $(1,1)$ coincide, we get the desired conclusion.

\medskip
\noindent{\textbf{Item (c5)}.}
Consider $f(z)=\ind{(1,3/2)}(z)$ and note that $f \in L^p(x^{\delta}dx)$ for any $\delta$.
Fix a small $0 < \xi < 1/2$. Choosing $t=2$, for $x \in (0,\xi)$ we get
\begin{align*}
V_{2,1}^{\a,\b}f(x) & \simeq \int_0^{2-x} \big[4-(x-z)^2\big]^{-\a-1/2} \big[ 4 - (x+z)^2\big]^{\a+\b-1/2} z^{2\a+1}
	\ind{(1,3/2)}(z)\, dz \\
	& \gtrsim \int_1^{5/4} \big[4-(x-z)^2\big]^{-\a-1/2} \big[ 4 - (x+z)^2\big]^{\a+\b-1/2} z^{2\a+1}\, dz 
	\simeq 1,\\
V_{2,2}^{\a,\b}f(x) & \simeq \int_0^{2-x} \big[ 4-(x-z)^2\big]^{\b-1} z^{2\a+1} \ind{(1,3/2)}(z)\, dz 
	\gtrsim \int_1^{5/4} z^{2\a+1}\, dz \simeq 1.
\end{align*}
Therefore, the restricted weak type $(p,p)$ of $V_{*,i}^{\a,\b}$, $i=1,2$, implies
\begin{equation} \label{id:V21}
\sup_{\lambda > 0} \lambda 
\bigg( \int_{\{ 0<x <\xi \,:\, 1 > \lambda\}} 
x^{\delta} \, dx \bigg)^{1/p}
\lesssim
1.
\end{equation}
With $\lambda=1/2$, this leads to 
$\int_0^{\xi}x^{\delta}\,dx<\infty$, i.e. $\delta > -1$.

\medskip
\noindent{\textbf{Item (c6)}.}
Take $f(z)=\ind{(1,2)}(z)$. Choosing $t=2x$ and considering $x$ large, say $x>C$, we have the bound
\begin{align*}
V_{2x,1}^{\a,\b}f(x)& \simeq \frac{1}{x^{2\a+2\b}} \int_0^{x} \big[ 4x^2-(x-z)^2\big]^{-\a-1/2}
	\big[ 4x^2- (x+z)^2\big]^{\a+\b-1/2} z^{2\a+1} \ind{(1,2)}(z)\, dz \\
& \gtrsim \frac{1}{x^{2\a+2}} \int_1^{2} z^{2\a+1}\, dz \gtrsim  \frac{1}{x^{2\a+2}}.
\end{align*}
As easily verified, the same bound holds for $V_{2x,2}^{\a,\b}f(x)$.
Thus the restricted weak type $(p,p)$ for $V_{*,i}^{\a,\b}$, $i=1,2$, implies
\begin{equation} 
\label{id:V1b}
\sup_{\lambda > 0} \lambda 
\bigg( \int_{\{  x>C \,:\,  x^{-2\alpha - 2} > \lambda\}} 
x^{\delta} \, dx \bigg)^{1/p}
\lesssim
1.
\end{equation}
Since $2\alpha+2>0$, taking into account small $\lambda$ we see that \eqref{id:V1b} leads to
$\lambda^{1 -(\delta + 1)/[(2\alpha + 2)p]} \lesssim 1$.
Consequently, necessarily $\delta \le (2\a+2)p-1$.

\medskip
The proof of Lemma \ref{lem:C} is complete.
\end{proof}

Observe that the triples $(t,x,z)$ involved in the counterexamples in the proof of Lemma \ref{lem:C}
are located either in $E_{\varepsilon}$ or in $F_{\varepsilon}$. More precisely, the triples from
Items (b1), (b2) and (c1)--(c4) are in $E_{\varepsilon}$ provided that we consider $x$ sufficiently large
(Items (b1), (b2) and (c3)) or $N$ large enough (Items (c1) and (c2)) or $\xi > 0$ sufficiently small (Item(c4)).
The triples from Items (b3), (c5) and (c6) are in $F_{\varepsilon}$ when one considers $x$ sufficiently large
(Items (b3) and (c6)) or $\xi > 0$ small enough (Item (c5)).
Finally, concerning Item (a), with $\xi>0$ chosen sufficiently small, the involved $(t,x,z)$ lie either in
$E_{\varepsilon}$, or in $\{(t,x,z) \in \RR_+^3: t \le |x-z|\}$ where
the kernel $K_t^{\a,\b}(x,z)$ vanishes a.e., so overall in a region where
the kernel does not change its sign, which is also fine for our purpose.

\section{Proofs of Lemmas \ref{lem:N2}--\ref{lem:Rlog}} \label{sec:auxTS}

In this section we give proofs of the mapping properties of the auxiliary maximal operators
$N_{\eta}$, $R_{\eta}$, $E_{k,\eta}$, $T_{\eta}$, $S_{\a,\b}$, $S_{\a,\b}^{\log}$ and $R_{\a,\b}^{\log}$
stated in Lemmas \ref{lem:N2}--\ref{lem:Rlog}.

\subsection{Proof of the mapping properties of $N_{\eta}$} \label{ssec:N}

\begin{proof}[{Proof of Lemma \ref{lem:N2}}]
Observe that for $\eta < 0$ we have $N_\eta f (x) = \infty$, $x>0$, for every $f \ne 0$;
in particular, there is no restricted weak type $(p,p)$.
If $\eta = 0$, then $N_\eta f (x) = \int_0^\infty z^{-1} \abs{f(z)} \, dz$ and
it is straightforward to see that, for any $\gamma \in \mathbb{R}$ and $1 \le p < \infty$,
$N_0$ is not of restricted weak type $(p,p)$ .
Therefore, from now on, we assume that $\eta > 0$.

We have
\begin{align*}
N_{\eta}f(x) \le H_{\eta}|f|(x) + H_0^{\infty}|f|(x),\qquad x > 0,
\end{align*}
since for $t > x$ we can write 
\begin{align*}
\frac{1}{t^{\eta}} \int_0^{t} z^{\eta -1} |f(z)|\, dz
& \le \frac{1}{x^{\eta}} \int_0^{x} z^{\eta-1} |f(z)|\, dz + \frac{1}{t^{\eta}} \int_x^t z^{\eta-1}|f(z)|\, dz \\
& \le \frac{1}{x^{\eta}} \int_0^{x} z^{\eta-1} |f(z)|\, dz + \int_x^{\infty} \frac{|f(z)|}{z}\, dz
	= H_{\eta}|f|(x) + H_0^{\infty}|f|(x).
\end{align*}
On the other hand, letting $t \to x^{+}$,
we see that
$$
N_{\eta}f(x) \ge H_{\eta}|f|(x), \qquad x > 0.
$$
Combining these two estimates with Lemmas~\ref{lem:H0} and \ref{lem:Hinf},
we see that in order to finish the proof of Lemma~\ref{lem:N2} it suffices to show that
for $\gamma \le -1$ and any $1 \le p < \infty$, the operator $N_{\eta}$ is not of restricted weak type $(p,p)$
with respect to the measure $x^\gamma dx$. 

Observe that $\ind{[1,2]} \in L^p(x^\gamma dx)$ for every $\gamma \in \mathbb{R}$.
Further,
\begin{align*}
N_{\eta} \ind{[1,2]} (x) 
\simeq
\sup_{t>x} t^{-\eta} \int_1^{t\wedge 2} z^{\eta -1} \, dz
\simeq 1, \qquad x \in (0,1).
\end{align*}
Therefore, the restricted weak type $(p,p)$ implies
\begin{align*}
\int_0^1 x^{\gamma} \, dx < \infty,
\end{align*}
which means that one must have $\gamma > -1$. This finishes the proof of Lemma~\ref{lem:N2}.
\end{proof}

\subsection{Proof of the mapping properties of $R_\eta$} \label{ssec:R}

We first state a simple but useful observation. Its verification is straightforward and left to the reader.
\begin{lema} \label{lem:chvar}
Let $\lambda \in \mathbb{R}$ be fixed and let $\mathcal{U},\mathcal{V}$ be operators
acting on functions on $\mathbb{R}_+$ and related by the formula
\begin{align} \label{chvar} 
\mathcal{V}f (x) = x^{\lambda} \mathcal{U}\big( (\cdot)^{-\lambda} f \big) (x), \qquad x > 0.
\end{align}
Then, given any $\zeta \in \mathbb{R}$ and $1\le p < \infty$, 
\begin{equation*}
\text{$\mathcal{V}$ is bounded on $L^p(\mathbb{R}_+, x^\zeta dx)$ \; if and only if \;
$\mathcal{U}$ is bounded on $L^p(\mathbb{R}_+, x^{\zeta+\lambda p} dx)$. }
\end{equation*}
\end{lema}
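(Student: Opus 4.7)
The plan is to reduce the equivalence to a trivial isometry computation. The key observation is that the pointwise formula $\mathcal{V}f(x)=x^{\lambda}\mathcal{U}((\cdot)^{-\lambda}f)(x)$ expresses $\mathcal{V}$ as a conjugation $\mathcal{V}=M_{\lambda}\circ\mathcal{U}\circ M_{-\lambda}$, where $M_{\mu}$ denotes the multiplication operator $g\mapsto x^{\mu}g$. So the lemma should follow at once if $M_{\lambda}$ is an isometry between the correct weighted $L^p$ spaces.

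First I would record the isometry statement: for every measurable $h$ on $\mathbb{R}_+$,
\begin{equation*}
\int_0^{\infty}\abs{x^{\lambda}h(x)}^p x^{\zeta}\,dx=\int_0^{\infty}\abs{h(x)}^p x^{\zeta+\lambda p}\,dx,
\end{equation*}
so that the map $M_{\lambda}$ is an isometric isomorphism from $L^p(\mathbb{R}_+,x^{\zeta+\lambda p}dx)$ onto $L^p(\mathbb{R}_+,x^{\zeta}dx)$, with inverse $M_{-\lambda}$.

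Next I would prove the two implications in parallel. Assume $\mathcal{U}$ is bounded on $L^p(\mathbb{R}_+,x^{\zeta+\lambda p}dx)$ with constant $C$. Take $f\in L^p(\mathbb{R}_+,x^{\zeta}dx)$ and set $g=x^{-\lambda}f$; by the isometry, $\|g\|_{L^p(x^{\zeta+\lambda p}dx)}=\|f\|_{L^p(x^{\zeta}dx)}$. Then $\mathcal{V}f=x^{\lambda}\mathcal{U}g$, and applying the isometry once more yields
\begin{equation*}
\|\mathcal{V}f\|_{L^p(x^{\zeta}dx)}=\|\mathcal{U}g\|_{L^p(x^{\zeta+\lambda p}dx)}\le C\|g\|_{L^p(x^{\zeta+\lambda p}dx)}=C\|f\|_{L^p(x^{\zeta}dx)},
\end{equation*}
giving boundedness of $\mathcal{V}$. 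The reverse implication is identical after swapping the roles of $f$ and $g$ via $f=x^{\lambda}g$ and using the relation $\mathcal{U}g=x^{-\lambda}\mathcal{V}(x^{\lambda}g)$, which is just \eqref{chvar} solved for $\mathcal{U}$.

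There is no genuine obstacle here; the only thing to be mildly careful about is that \eqref{chvar} is a pointwise identity defined a.e., so one should note that the substitutions $g=x^{-\lambda}f$ and $f=x^{\lambda}g$ are well defined on $\mathbb{R}_+$ (where $x>0$), and that the constants in the two boundedness statements coincide. This also makes the proof go through unchanged if, in the future, one wishes to replace strong boundedness by a weak or restricted weak type statement, since $M_{\lambda}$ is an isometry on Lorentz spaces with the indicated weights as well.
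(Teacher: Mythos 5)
Your proof is correct and is the natural one: you write $\mathcal{V}=M_{\lambda}\circ\mathcal{U}\circ M_{-\lambda}$ and observe that $M_{\lambda}$ is an isometric isomorphism from $L^p(x^{\zeta+\lambda p}dx)$ onto $L^p(x^{\zeta}dx)$, whence the two boundedness statements are equivalent with the same constant. The paper explicitly leaves this verification to the reader, and your argument (including the remark that it carries over to Lorentz spaces) is exactly the intended one.
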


\begin{proof}[Proof of Lemma \ref{lem:R2}]
We split the reasoning into three steps.

\medskip
\noindent \textbf{Step 1.}
We show that for $\gamma < -\eta p$ and any $1 \le p < \infty$, $R_{\eta}$ is not of restricted weak type $(p,p)$.

For $N > 1$ let $f_N = \ind{[N,N+1]}$. Then we have 
\begin{align*} 
\| f_N \|_{L^p(x^\gamma dx)} \simeq N^{\gamma/p}, 
\qquad N > 1.
\end{align*}
Further, observe that taking $t = N+1$ in the expression defining $R_{\eta}$ we get
\begin{align*}
R_{\eta} f_N (x)
\gtrsim
x^{\eta - 1} \int_{N + 1 - x}^{N + 1 } z^{-\eta} \, dz
\simeq 
x^{\eta }  N^{-\eta}, \qquad N > 1, \quad x \in (0,1).
\end{align*}
Therefore, the restricted weak type $(p,p)$ of $R_{\eta}$ implies
\begin{align*} 
\sup_{\lambda > 0} \lambda \bigg( 
\int_{ \{ x < 1 \,:\, x^\eta N^{-\eta} \ge \lambda \} } x^\gamma \, dx
\bigg)^{1/p}
\lesssim
N^{\gamma/p}, \qquad N > 1.
\end{align*}
Observe that the above specified to $\lambda = (2N)^{-\eta}$ forces
(notice that the constraint $x^\eta N^{-\eta} \ge \lambda$ is equivalent to $x\ge 1/2$ if $\eta > 0$ and $x\le 1/2$ if $\eta < 0$) 
\begin{align*} 
N^{-\eta}
\lesssim
N^{\gamma/p}, \qquad N > 1.
\end{align*}
This, in turn, implies $-\eta \le \gamma/p$, which gives the desired conclusion.

\medskip
\noindent \textbf{Step 2.}
We prove part (a) of the lemma.

In view of
Lemma \ref{lem:chvar}, it suffices to consider only the case $\eta = 0$. Therefore, denoting for brevity $R:=R_0$ and
taking into account Step 1, to conclude Step 2 it is enough to prove the following three statements.
\begin{align} \label{id:31}
\text{For $\gamma \ge 0$ and $p>1$ the operator $R$ is bounded on $L^p(x^\gamma dx)$. } \\ \label{id:32}
\text{For $\gamma > 0$ the operator $R$ is bounded on $L^1(x^\gamma dx)$. } \\ \label{id:33}
\text{$R$ is not bounded on $L^1(dx)$. } 
\end{align}

We first deal with \eqref{id:31}. Let $\gamma \ge 0$ and observe that 
\begin{align*} 
Rf (x)
\simeq
x^{-1} \sup_{t > 2x} t^{-\gamma} \int_{t-x}^{t+x} z^{\gamma}\abs{f(z)} \, dz
\lesssim
x^{-\gamma - 1} \| f \|_{L^1 (x^\gamma dx)}.
\end{align*}
It follows that $R$ is of weak type $(1,1)$ with respect to the measure $x^{\gamma}dx$.
Interpolating now with the trivial $L^\infty$-boundedness of $R$ we get \eqref{id:31}.

Next, we prove \eqref{id:32}. Let $0 \le \lambda < \gamma$ be fixed. Then
\begin{align*} 
Rf (x)
\simeq 
x^{-1} \sup_{t > 2x} t^{-\lambda} \int_{t-x}^{t+x} z^{\lambda} \abs{f(z)} \, dz
\lesssim
x^{-\lambda - 1} \int_{x}^{\infty} z^{\lambda} \abs{f(z)} \, dz.
\end{align*}
Consequently, we obtain
\begin{align*} 
\|Rf \|_{L^1(x^\gamma dx)}
\lesssim
\int_0^\infty z^\lambda 
 \abs{f(z)} \int_0^z x^{-\lambda - 1 + \gamma} \, dx \, dz
\simeq
\|f \|_{L^1(x^\gamma dx)}.
\end{align*}
This gives \eqref{id:32}.

Finally, we show \eqref{id:33}.
Invoking $f_N$ from Step 1 and
taking $t=x+N$ in the expression defining $R$ we get
\begin{align*} 
R f_N (x) 
\gtrsim
x^{-1} \int_N^{2x + N} f_N(z) \, dz 
= 
x^{-1}, \qquad 1/2 < x < N, \quad N > 1.
\end{align*}
Therefore,
$L^1(dx)$-boundedness of $R$ would, in particular, imply
\begin{align*} 
\log N \simeq \int_{1/2}^N x^{-1} \, dx 
\lesssim
\int_{N}^{N+1} \, dx = 1, \qquad N > 1,
\end{align*}
which is not true for large $N$.
Thus \eqref{id:33} is proved, and this finishes Step 2.

\medskip
\noindent \textbf{Step 3.}
Finally, we show that for $p=1$ and $\gamma = - \eta p = - \eta$ the operator $R_\eta$ is of weak type $(1,1)$
with respect to $x^\gamma dx$
if and only if $\eta \ne 1$. Recall that weak and restricted weak types $(1,1)$ are equivalent.

We first show the positive part of the above statement.
Observe that
\begin{align*} 
R_\eta f (x)
\lesssim
x^{\eta - 1} \| f \|_{L^1 (x^{-\eta} dx)}.
\end{align*}
If $\eta < 1$, then we further get
\begin{align*} 
\int_{\big\{x \,:\, x^{\eta - 1} \| f \|_{L^1 (x^{-\eta} dx)} > \lambda \big\}} 
x^{-\eta} \, dx
=
\int_{\big\{x \,:\, x< \big(\lambda^{-1} \| f \|_{L^1 (x^{-\eta} dx)} \big)^{1/(1-\eta)} \big\}} 
x^{-\eta} \, dx
\simeq
\frac{ \| f \|_{L^1 (x^{-\eta} dx)}}{\lambda},
\end{align*}
uniformly in $\lambda > 0$. Similarly, if $\eta > 1$, then, again uniformly in $\lambda > 0$,
\begin{align*} 
\int_{\big\{x \,:\, x^{\eta - 1} \| f \|_{L^1 (x^{-\eta} dx)} > \lambda \big\}} 
x^{-\eta} \, dx
=
\int_{\big\{x \,:\, x > \big(\lambda \| f \|_{L^1 (x^{-\eta} dx)}^{-1} \big)^{1/(\eta - 1)} \big\}} 
x^{-\eta} \, dx
\simeq
\frac{ \| f \|_{L^1 (x^{-\eta} dx)}}{\lambda}.
\end{align*}

It remains to show the negative result when $\eta = 1$.
Let $0< b/2 < a < b$
and observe that uniformly in $(b-a)/2 < x < b/3$ we have (take $t= b - x \simeq b$ in the expression for $R_{\eta}$)
\begin{align*}
R_{1} \ind{[a,b]} (x)
\gtrsim
(b-a)/b, \qquad b/2 < a < b.
\end{align*}
Therefore, assuming the weak type $(1,1)$ of $R_1$ with respect to $x^{-1} dx$, we have 
\begin{align*} 
\sup_{\lambda > 0} \lambda \int_{\{ (b-a)/2 < x < b/3 \,:\, (b-a)/b > \lambda\}}
x^{-1} \, dx
\lesssim
\int_a^b x^{-1} \, dx
=
\log \frac{b}a 
\simeq 
\frac{b-a}a \simeq 
\frac{b-a}b,
\end{align*}
uniformly in $b/2 < a < b$.
This implies
\begin{align*} 
\int_{(b-a)/2 }^{b/3}
x^{-1} \, dx
\lesssim
1, 
\qquad b/2 < a < b.
\end{align*}
But taking e.g.\ $b=1$ and letting $a \to 1^{-}$ we get a contradiction. This concludes Step 3.

The proof of Lemma~\ref{lem:R2} is complete.
\end{proof}

\subsection{Proof of the mapping properties of $E_{k, \eta}$} \label{ssec:E}

\begin{proof}[Proof of Lemma \ref{lem:E2}]
As in \cite{DMO} we denote $\delta_k f(x) = f(x^{1/k})$. It is easy to see that 
$$
k E_{k, \eta} f (x)
=
\delta_{1/k} E_{1, \eta/k} \delta_k f (x),
\qquad x > 0.
$$
Then
the proof of Lemma~\ref{lem:E2} is reduced to the special case $k=1$ related to the Hardy-Littlewood maximal operator. 
More precisely, using the above identity one can show that 
$E_{k, \eta}$ is of strong/weak/restricted weak type $(p,p)$ with respect to the measure $x^{\gamma} dx$
if and only if 
$E_{1, \eta/k}$ is of strong/weak/restricted weak type $(p,p)$ with respect to the measure $x^{(\gamma+1)/k -1} dx$.
So from now on we assume that $k=1$. 

Using Lemma \ref{lem:chvar}, we see that proving item (a) of the lemma is reduced to the special case $\eta = 0$.
This, however, is contained in \cite[Lemma 2.1(a)]{DMO}. Unfortunately, there is no analogue of Lemma~\ref{lem:chvar}
for the weak/restricted weak type
(see, for instance, Lemma~\ref{lem:R2} and compare the result for $\eta = 0$ and $\eta = 1$ when $p=1$). 

Considering the case $\eta = 0$, the operator $E_{1, 0}$ is the classical Hardy-Littlewood maximal operator.
Thus the condition characterizing the weak type $(p,p)$ is just the $A_p$ condition, see e.g.\ \cite[Chapter 7]{Duo}.
The restricted weak type inequality from part (c) is also contained in \cite[Lemma 2.1(a)]{DMO}.

Next, we split $E_{1,\eta}$ as follows:
\begin{align*}
E_{1, \eta} f (x)
& \simeq
x^{\eta} 
\sup_{\substack{ 0\le a <x< b \\ a>2b/3}} \frac{1}{b-a} \int_{a}^{b} z^{-\eta} \abs{f(z)} \, dz
+
x^{\eta} \sup_{\substack{ 0\le a <x< b \\ a \le 2b/3}} \frac{1}{b} \int_{a}^{b} z^{-\eta} \abs{f(z)} \, dz \\
& \simeq
\sup_{\substack{ 0\le a <x< b \\ a>2b/3}} \frac{1}{b-a} \int_{a}^{b} \abs{f(z)} \, dz
+
x^{\eta} \sup_{ x< b } \frac{1}{b} \int_{0}^{b} z^{-\eta} \abs{f(z)} \, dz \\
& =:
B_1 f (x) + B_2 f (x).
\end{align*}
The last estimate holds because $a\simeq b \simeq x$ in case of the first operator,
and for the second one the choice of $a=0$ in the supremum is optimal.

Observe that the operator $B_1$ is comparable with $L$.
Therefore, taking into account Lemma \ref{lem:L}, it suffices to consider $B_2$. We have
\begin{align*}
B_2f(x)&\simeq x^{\eta}\bigg(\sup_{ x< b } \frac{1}{b} \int_{0}^{x} z^{-\eta} \abs{f(z)} \, dz +\sup_{ x< b }
	\frac{1}{b} \int_{x}^{b} z^{-\eta} \abs{f(z)} \, dz\bigg)\\
& \lesssim x^{\eta-1} \int_{0}^{x} z^{-\eta} \abs{f(z)} \, dz + x^{\eta}\int_{x}^{\infty} z^{-\eta-1} \abs{f(z)} \, dz
	= H_{1-\eta}|f|(x)+H^{\infty}_{\eta}|f|(x).
\end{align*}
On the other hand, 
letting $b \to x^{+}$ in the expression defining $B_{2}f(x)$ we get
\begin{align*}
B_2 f(x)\gtrsim H_{1-\eta}|f|(x), \qquad x > 0.
\end{align*}
Combining the above estimates with Lemmas \ref{lem:H0} and \ref{lem:Hinf},
we see that what is left to prove are the following two statements.
\begin{align} \label{eq:12} 
& \text{If $\gamma = -\eta p-1$, $\eta \ne 0$ and $p>1$, then $B_{2}$ is of weak type $(p,p)$.} \\ \label{eq:13} 
& \text{If $\gamma < -\eta p-1$, $\eta \ne 0$ and $p \ge 1$, then $B_2$ is not of restricted weak type $(p,p)$.}
\end{align}

Showing \eqref{eq:12} is straightforward. Notice that 
\begin{align*}
\abs{B_2 f(x)} \le x^{\eta} \| f \|_{L^p(x^\gamma dx)} , \qquad x > 0.
\end{align*}
This leads directly to the required property. 

It remains to show \eqref{eq:13}.
Consider $f = \ind{[1,2]} \in L^p(x^{\gamma} dx)$.
Then, taking $b=2$ in the expression for $B_2$, we get
$$
B_2 \ind{[1,2]} (x)
\gtrsim
x^{\eta} \int_{1}^{2} z^{-\eta} \, dz\simeq x^{\eta}, \qquad x \in (0,1).
$$
Therefore, assuming that $B_2$ is of restricted weak type $(p,p)$ for $\gamma<-\eta p-1$, we have 
\begin{align} \label{eq:11}
\sup_{\lambda > 0} \lambda  \bigg( 
\int_{ \{ x\in (0,1) \,:\, x^\eta \ge \lambda \} } x^\gamma \, dx
\bigg)^{1/p}
< \infty.
\end{align}
If $\eta>0$, since $\gamma <-\eta p-1<-1$, one has
$$
\bigg( 
\int_{\lambda^{1/\eta}}^1 x^\gamma \, dx
\bigg)^{1/p}\simeq \lambda^{\frac{\gamma+1}{\eta p}}, \qquad 0<\lambda<1/2,
$$
and thus \eqref{eq:11} implies
\begin{equation*}
 \lambda^{1+\frac{\gamma+1}{\eta p}}\lesssim 1, \qquad 0< \lambda < 1/2.
\end{equation*}
This forces $\gamma \ge -\eta p-1$, a contradiction. 
If $\eta<0$, \eqref{eq:11} implies
\begin{align*} 
\lambda^{1+\frac{\gamma+1}{\eta p}}
\lesssim
\lambda
\bigg( 
\int_{ \lambda^{1/\eta}/2 }^{ \lambda^{1/\eta} } x^\gamma \, dx
\bigg)^{1/p}
\lesssim
\lambda
\bigg( 
\int_{ \{ x\in (0,1) \,:\, x^\eta \ge \lambda \} } x^\gamma \, dx
\bigg)^{1/p}
\lesssim
1, \qquad \lambda > 0.
\end{align*}
Since now $1+\frac{\gamma+1}{\eta p} > 0$, letting $\lambda \to \infty$ we again end up with a contradiction.
Now \eqref{eq:13} follows.

The proof of Lemma~\ref{lem:E2} is finished.
\end{proof}

\subsection{Proof of the mapping properties of $T_{\eta}$} \label{ssec:Teta}

In order to prove Lemma~\ref{lem:Tchar} we need some preparatory results.
To begin with, we state a simple estimate leaving its verification to the reader.

For $\lambda \in \mathbb{R}$ fixed,
\begin{align} \label{id:1}
\int_{a}^b x^{\lambda} \, dx 
\simeq
\begin{cases}
(b-a) b^{\lambda} & \text{ if } \lambda > - 1,\\
\log(b/a) & \text{ if } \lambda = - 1,\\
(b-a) b^{-1} a^{\lambda + 1} & \text{ if } \lambda < - 1, 
\end{cases}
\qquad 0 < a \le b < \infty.
\end{align}

\begin{lem} \label{lem:F2ref2}
Let $\lambda > -1$ and $\xi \in \mathbb{R}$ be fixed.
Then
\begin{align} \label{id:2ref2}
\int_a^{b} (A + x)^{\xi} x^{\lambda} \, dx 
\simeq
\begin{cases}
(b-a) b^{\lambda} (b+ A)^{\xi} & \text{ if } \xi + \lambda > -1,\\
b^{\lambda} (b+ A)^{-\lambda } \log \Big( 1 + \frac{b-a}{a+A} \Big) & \text{ if } \xi + \lambda = -1,\\
(b-a) b^{\lambda} (a + A)^{\xi + \lambda + 1} (b + A)^{- \lambda  - 1} & \text{ if } \xi + \lambda < -1,
\end{cases}
\end{align}
uniformly in $0 \le a < b$ and $A > 0$.
\end{lem}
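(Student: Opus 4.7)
The plan is to exploit the elementary comparison $A+x\simeq \max(A,x)$ (valid for all $A,x>0$), together with $b+A\simeq \max(b,A)$ and $a+A\simeq \max(a,A)$, in order to reduce the left-hand side of \eqref{id:2ref2} to pure power integrals that are then evaluated by \eqref{id:1}. Three regimes are considered, according to the position of $A$ relative to $[a,b]$.

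First, when $A\ge b$, one has $A+x\simeq A$ on $[a,b]$, so the integral equals $\simeq A^{\xi}(b-a)b^{\lambda}$ by \eqref{id:1} applied with exponent $\lambda>-1$. A direct computation (using $\log(1+s)\simeq s$ for $s=(b-a)/(a+A)\le 1$ in the middle branch of \eqref{id:2ref2}) confirms that this matches each of the three claimed expressions in turn. Analogously, when $A\le a$, we have $A+x\simeq x$ on $[a,b]$, so the integral reduces to $\int_a^b x^{\xi+\lambda}\,dx$ and \eqref{id:1} applied with exponent $\xi+\lambda$ yields the three branches directly, using $b+A\simeq b$ and $a+A\simeq a$.

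The interesting regime is $a<A<b$. Splitting the integral at $x=A$ and applying the two comparisons above gives
$$
\int_a^b (A+x)^{\xi}x^{\lambda}\,dx \simeq A^{\xi}\int_a^A x^{\lambda}\,dx+\int_A^b x^{\xi+\lambda}\,dx \simeq (A-a)A^{\xi+\lambda}+\int_A^b x^{\xi+\lambda}\,dx,
$$
and the remaining integral is handled by \eqref{id:1}. The three subcases must then be matched with the three branches of \eqref{id:2ref2}, noting that $a+A\simeq A$ and $b+A\simeq b$ in this regime. For $\xi+\lambda>-1$ one verifies the identity $(A-a)A^{\xi+\lambda}+(b-A)b^{\xi+\lambda}\simeq (b-a)b^{\xi+\lambda}$: the upper bound follows from $\int_a^A x^{\xi+\lambda}\,dx\le \int_a^b x^{\xi+\lambda}\,dx$ together with \eqref{id:1}, while the lower bound is obtained by splitting according to whether $A\ge b/2$ (first term dominates) or $A<b/2$ (second term dominates). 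The case $\xi+\lambda<-1$ is verified by the analogous elementary manipulation, reducing to checking that $(A-a)/A\le (b-a)/b$ (which follows from $A\le b$).

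The main obstacle is the logarithmic borderline $\xi+\lambda=-1$ in this third regime, where the match reduces to showing
$$
\frac{A-a}{A}+\log\frac{b}{A}\simeq \log\Big(1+\frac{b-a}{A}\Big).
$$
This is obtained by separating the regimes $b\le 2A$, in which case both sides are $\simeq (b-a)/A$ via $\log(1+s)\simeq s$ for bounded $s$, and $b>2A$, in which case $\log(1+(b-a)/A)\simeq \log(b/A)$ and the polynomial term $(A-a)/A\le 1$ is absorbed by $\log(b/A)\ge \log 2$. This completes the verification of \eqref{id:2ref2}.
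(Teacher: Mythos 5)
Your decomposition is essentially the same as the paper's: compare $A+x$ with $\max(A,x)$, reduce to pure power integrals via \eqref{id:1}, and split into three regimes according to the position of $A$ relative to $[a,b]$. The difference is in the case boundaries: the paper uses $b\le 2A$, $a\ge A$, and $a\le A\le b/2$, whereas you use $A\ge b$, $A\le a$, and $a<A<b$. The paper's cuts force $b-a\simeq b$ in the intermediate case, which trivializes the matching of the resulting expressions; your wider intermediate case does not, and this is precisely where a gap appears.

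Concretely, in the case $a<A<b$ with $\xi+\lambda>-1$, you assert that when $A\ge b/2$ ``the first term dominates'', i.e.\ $(A-a)A^{\xi+\lambda}\gtrsim (b-a)b^{\xi+\lambda}$. This is false: take $A=b/2$ and $a=A-\varepsilon$, so that the first term is $\simeq \varepsilon\, b^{\xi+\lambda}$ while the target is $\simeq b^{\xi+\lambda+1}$, and the ratio tends to $0$ as $\varepsilon\to 0$. The correct argument for $A\ge b/2$ is that $A\simeq b$, hence the two coefficients $A^{\xi+\lambda}$ and $b^{\xi+\lambda}$ are comparable and the interval lengths simply add up, $(A-a)+(b-A)=b-a$; alternatively one can split according to which of $A-a$, $b-A$ is at least $(b-a)/2$. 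The same issue resurfaces in the subcase $\xi+\lambda<-1$, where your reduction to $(A-a)/A\le (b-a)/b$ only secures the upper bound, and the lower bound again requires the $A\simeq b$ (or two-regime) observation. The outer cases ($A\ge b$, $A\le a$) and the logarithmic borderline $\xi+\lambda=-1$ are handled correctly, and once the lower-bound argument in the intermediate case is repaired as above the proof goes through.
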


\begin{proof}
We distinguish three cases.

\noindent \textbf{Case 1:} $b \le 2A$. Using \eqref{id:1} we get
\begin{align*} 
\texttt{LHS}\eqref{id:2ref2} 
\simeq
A^{\xi} \int_a^b x^\lambda \, dx
\simeq 
(b-a) A^{\xi} b^\lambda
\simeq 
\texttt{RHS}\eqref{id:2ref2}. 
\end{align*}

\noindent \textbf{Case 2:} $a \ge A$. Using again \eqref{id:1} we obtain
\begin{align*} 
\texttt{LHS}\eqref{id:2ref2} 
& \simeq
\int_a^b x^{\xi + \lambda} \, dx
\simeq
\begin{cases}
(b-a)  b^{\xi + \lambda} & \text{ if } \xi + \lambda > -1,\\
\log \Big( 1 + \frac{b-a}{a} \Big) & \text{ if } \xi + \lambda = -1,\\
(b-a) b^{-1} a^{\xi + \lambda + 1} & \text{ if } \xi + \lambda < -1
\end{cases} \\
& \simeq 
\texttt{RHS}\eqref{id:2ref2}. 
\end{align*}

\noindent \textbf{Case 3:} $a \le A$ and $b \ge 2A$.
Here we split the integral in question and use \eqref{id:1} to get
\begin{align*} 
\texttt{LHS}\eqref{id:2ref2} 
& \simeq
\bigg( \int_a^{3A/2} + \int_{3A/2}^b \bigg) (A + x)^{\xi} x^{\lambda} \, dx
\simeq
A^{\xi}  \int_a^{3A/2} x^\lambda \, dx 
+
\int_{3A/2}^b x^{\xi + \lambda} \, dx \\
& \simeq 
\begin{cases}
b^{\xi + \lambda + 1} & \text{ if } \xi + \lambda > -1,\\
\log \Big( 1 + \frac{b}{A} \Big) & \text{ if } \xi + \lambda = -1,\\
A^{\xi + \lambda + 1} & \text{ if } \xi + \lambda < -1
\end{cases} \\
& \simeq 
\texttt{RHS}\eqref{id:2ref2}. 
\end{align*}
This finishes the proof of Lemma~\ref{lem:F2ref2}.
\end{proof}

We are now ready to show the mapping properties of $T_{\eta}$.

\begin{proof}[Proof of Lemma~\ref{lem:Tchar}]
We first focus on the negative part of the lemma. Namely, we will show the following statements.
\begin{align} \label{st:1} 
& \text{$T_{\eta}$ is {\bf{not}} of restricted weak type $(p,p)$ with respect to $x^{\gamma} dx$ if $\gamma \le -1$.} \\ \label{st:2} 
& \text{$T_{\eta}$ is {\bf{not}} of restricted weak type $(p,p)$ with respect to $x^{\gamma} dx$
	if $\gamma < p(\eta-1)$.} \\ \label{st:3}  
& \text{$T_{\eta}$ is {\bf{not}} of strong type $(1,1)$ with respect to $x^{\gamma} dx$ if $\gamma = \eta-1$.}  
\end{align}

To proceed, let $a,b > 0$ be such that $0< b/2 < a < b$ and put
\begin{align*} 
f(x) = \ind{(a,b)} (x), \qquad x > 0,
\end{align*}
so that 
\begin{equation}
\label{eq:f}
\|f\|_{L^p(x^{\gamma}\,dx)}^p \simeq b^{\gamma}(b-a).
\end{equation}
Observe that using \eqref{id:1} for $x \le a/4$ we get
\begin{align*} 
T_\eta f (x) 
& \simeq
\sup_{t>2x} b^{\eta - 1} 
\int_{(t/2) \vee a}^{t\wedge b} (t-z+x)^{-\eta} \, dz \\
& \simeq
b^{\eta - 1} \sup_{2b > t > a}
\begin{cases}
[t\wedge b - (t/2)\vee a] [t - (t/2)\vee a + x]^{-\eta} 
& \text{ if } \eta < 1,\\
\log \Big( \frac{t - (t/2)\vee a + x}{t - t\wedge b + x} \Big) 
& \text{ if } \eta = 1,\\
[t\wedge b - (t/2)\vee a]  [t - (t/2)\vee a + x]^{-1}
[t - t\wedge b + x]^{-\eta + 1} 
& \text{ if } \eta > 1.
\end{cases}
\end{align*}
Note that the restriction in $t$ in the supremum in the last expression above comes from the constraint 
$(t/2) \vee a < t\wedge b$.
Evaluating the above expression under supremum at $t = b$ or $t=2a$ we get
\begin{align} \label{id:3}
T_\eta f (x) 
\gtrsim
\begin{cases}
(b-a) b^{-1} 
& \text{ if } \eta \le 0,\\
(b-a) b^{\eta - 1} (b-a+x)^{-\eta} 
& \text{ if } \eta \in (0,1),\\
\log \Big( 1 + \frac{b - a }{x} \Big) 
& \text{ if } \eta = 1,\\
(b-a) b^{\eta - 1}  (b-a+x)^{-1}
x^{-\eta + 1} 
& \text{ if } \eta > 1,
\end{cases}
\end{align}
uniformly in $0< b/2 < a < b$ and $0 < x \le a/4$.

Choosing now $a=2$ and $b=3$, we see that $T_\eta f (x) \gtrsim 1$ for $x \in (0,1/2)$ and \eqref{st:1} follows.

Next, we focus on \eqref{st:2}. Taking into account \eqref{st:1}, we may assume that $-1 < \gamma < p (\eta - 1)$.
Notice that, in particular, we have $\eta > 1/p'$. To proceed, it is convenient to distinguish three cases.

\noindent \textbf{Case 1:} $\eta > 1/p'$, $\eta \in (0,1)$.  Here we may assume that $\gamma \in (-1, p(\eta - 1) )$.
Observe that, in view of \eqref{id:3} and \eqref{eq:f}, the restricted weak type $(p,p)$ of $T_\eta$ would
lead to the estimate
\begin{align*}
\sup_{\lambda > 0} \lambda^p \int_{\{0 < x \le (b-a)/4 \,:\, (b-a)^{1-\eta} b^{\eta - 1} \ge \lambda \}} 
x^\gamma \, dx
\lesssim
b^\gamma (b-a), \qquad 0< b/2 < a < b. 
\end{align*}
Taking $\lambda = (b-a)^{1-\eta} b^{\eta - 1}$ we see that the above forces 
\begin{align*} 
(b-a)^{\gamma - p(\eta - 1)} 
\lesssim
b^{\gamma - p(\eta - 1)}, \qquad 0< b/2 < a < b. 
\end{align*}
Letting $b=1$ and $a\to 1^{-}$,  this leads to a contradiction since $\gamma - p(\eta - 1) < 0$.

\noindent \textbf{Case 2:} $\eta > 1/p'$, $\eta  = 1$. Here we assume that $\gamma \in (-1,0)$. Taking into account
\eqref{id:3}, we see that the restricted weak type $(p,p)$ of $T_\eta$ would lead to the bound
\begin{align*}
\sup_{\lambda > 0} \lambda^p \int_{\big\{0 < x \le (b-a)/4 \,:\, \log \big( 1 + \frac{b - a }{x} \big) \ge \lambda \big\}} 
x^\gamma \, dx
\lesssim
b^\gamma (b-a), \qquad 0< b/2 < a < b. 
\end{align*}
Taking $\lambda = \log 2$, the above implies
\begin{align*}
(b-a)^\gamma
\lesssim
b^\gamma , \qquad 0< b/2 < a < b. 
\end{align*}
Letting $b=1$ and $a\to 1^{-}$, this leads to a contradiction since $\gamma < 0$.

\noindent \textbf{Case 3:} $\eta > 1/p'$, $\eta  > 1$. Here we assume that $\gamma \in (-1,  p(\eta - 1) )$. From
\eqref{id:3} we see that restricted weak type $(p,p)$ for $T_\eta$ would lead to
\begin{align*}
\sup_{\lambda > 0} \lambda^p \int_{\{0 < x \le (b-a)/4 \,:\, b^{\eta - 1} x^{1-\eta} \ge \lambda \}} 
x^\gamma \, dx
\lesssim
b^\gamma (b-a), \qquad 0< b/2 < a < b. 
\end{align*}
This bound is equivalent to 
\begin{align*}
\sup_{\lambda > 0} \lambda^p \big[ (b-a) \wedge (b \lambda^{-1/(\eta - 1)}) \big]^{\gamma + 1} 
\lesssim
b^\gamma (b-a), \qquad 0< b/2 < a < b. 
\end{align*}
Choosing $\lambda \simeq [b/(b-a)]^{\eta - 1}$, the above forces
\begin{align*} 
(b-a)^{\gamma - p(\eta - 1)} 
\lesssim
b^{\gamma - p(\eta - 1)}, \qquad 0< b/2 < a < b. 
\end{align*}
Taking  $b=1$ and $a\to 1^{-}$, this leads to a contradiction since $\gamma - p(\eta - 1) < 0$.
This finishes showing \eqref{st:2}.

Finally, we deal with \eqref{st:3}. Thanks to \eqref{st:1} we may assume that $\eta > 0$,
which is equivalent to $\gamma > -1$ since $\gamma = \eta - 1$. To proceed, we distinguish similar cases as above.

\noindent \textbf{Case 1:} $\eta \in (0,1)$.
Using \eqref{id:3} we see that $L^1(x^\gamma dx)$-boundedness of $T_\eta$ would
lead to the estimate
\begin{align*}
(b-a) b^{\eta - 1} \int_{0}^{a/4} (b-a+x)^{-\eta} x^\gamma \, dx
\lesssim
b^\gamma (b-a), \qquad 0< b/2 < a < b. 
\end{align*}
By Lemma~\ref{lem:F2ref2} this bound is equivalent to
\begin{align*} 
\log \Big( 1 + \frac{a }{b - a} \Big)
\lesssim
1, \qquad 0< b/2 < a < b. 
\end{align*}
Letting $b=1$ and $a\to 1^{-}$ we get a contradiction.

\noindent \textbf{Case 2:} $\eta  = 1$. Here $\gamma = 0$ and, in view of
\eqref{id:3}, the $L^1(dx)$-boundedness of $T_\eta$ would imply the bound
\begin{align*}
\int_{0}^{a/4} 
\log \Big( 1 + \frac{b - a }{x} \Big)  \, dx
\lesssim
b-a, \qquad 0< b/2 < a < b. 
\end{align*}
Changing the variable $x \mapsto (b-a)y$, we obtain
\begin{align*}
\int_{0}^{a/4(b-a)} 
\log \Big( 1 + \frac{1}{y} \Big)  \, dy
\lesssim
1, \qquad 0< b/2 < a < b. 
\end{align*}
Taking  $b=1$ and letting $a\to 1^{-}$ we are led to a contradiction, since 
$\int_{0}^{\infty} \log ( 1 + \frac{1}{y} )  \, dy = \infty$.

\noindent \textbf{Case 3:}  $\eta  > 1$. From
\eqref{id:3} we see that the $L^1(x^\gamma dx)$-boundedness of $T_\eta$ would lead to
\begin{align*}
\log \Big( 1 + \frac{a}{b - a } \Big)
\lesssim
1, \qquad 0< b/2 < a < b. 
\end{align*}
Taking once again $b=1$ and letting $a\to 1^{-}$ we end up with a contradiction.

The proof of \eqref{st:3} is finished.
This completes justifications of the negative results contained in the lemma.

We pass to the positive part of Lemma~\ref{lem:Tchar}. 
As a preparatory observation, note that by H\"older's inequality and \eqref{id:1}
\begin{align} \nonumber
T_{\eta} f (x)
& \lesssim
\sup_{t>2x} t^{\eta - 1} \bigg( \int_{t/2}^t \abs{f(z)}^p \, dz \bigg)^{1/p}
\big\| \ind{(t/2, t)} (t- (\cdot) +x)^{-\eta} \big\|_{L^{p'}(dx)} \\ \label{id:4}
& \simeq 
\sup_{t>2x} t^{\eta - 1} \bigg( \int_{t/2}^t \abs{f(z)}^p \, dz \bigg)^{1/p}
\begin{cases}
t^{1/p' - \eta} & \text{ if } \eta < 1/p',\\
\big( \log \big( 1 + t/x\big) \big)^{1/p'} & \text{ if } \eta = 1/p',\\
x^{1/p' - \eta} & \text{ if } \eta > 1/p'.
\end{cases}
\end{align}

The proof of the positive part of Lemma~\ref{lem:Tchar} splits into showing the following statements
(with all the mapping properties referring to the measure $x^{\gamma}dx$).
\begin{align} \label{st:4} 
& \text{If $\eta < 1/p'$ and $\gamma > -1$, then $T_{\eta}$ is of strong type $(p,p)$.} 
\\ \label{st:5} 
& \text{If $\eta = 1/p'$ and $\gamma > -1$, then $T_{\eta}$ is of strong type $(p,p)$.} 
\\ \label{st:6}
& \text{If $\eta > 1/p'$ and $\gamma > p(\eta-1)$, then $T_{\eta}$ is of strong type $(p,p)$.} 
\\ \label{st:7}
& \text{If $\eta > 1/p'$, $\gamma = p(\eta-1)$ and $p>1$, then $T_{\eta}$ is of strong type $(p,p)$.}
\\ \label{st:8}
& \text{If $\eta > 1/p'$, $\gamma = p(\eta-1)$ and $p=1$, then $T_{\eta}$ is of weak type $(1,1)$.} 
\end{align}

\noindent \textbf{Proof of \eqref{st:4}}. Since $\gamma > -1$ we can fix any $\tau \in [-1,\gamma)$.
Then, using \eqref{id:4} we see that
\begin{align*} 
T_{\eta} f (x) 
\lesssim
\sup_{t>2x} t^{- 1/p - \tau/p} \bigg( \int_{x}^\infty \abs{f(z)}^p z^{\tau} \, dz \bigg)^{1/p}
\lesssim
x^{- 1/p - \tau/p} \bigg( \int_{x}^\infty \abs{f(z)}^p z^{\tau} \, dz \bigg)^{1/p}.
\end{align*}
This leads to
\begin{align*} 
\| T_{\eta} f \|_{L^p(x^\gamma dx)}^p
& \lesssim
\int_0^\infty x^{- \tau - 1 + \gamma } 
\int_x^\infty \abs{f(z)}^p z^{\tau} \, dz \, dx
= 
\int_0^\infty \abs{f(z)}^p z^{\tau} \int_0^z x^{- \tau - 1 + \gamma } \, dx \, dz \\
& \simeq 
\| f \|_{L^p(x^\gamma dx)}^p,
\end{align*}
which proves \eqref{st:4}.

\noindent \textbf{Proof of \eqref{st:5}}. Since $\gamma > -1$ we may fix $\tau$ such that 
$-1 = p(\eta - 1) < \tau < \gamma$. Then, using \eqref{id:4} we get
\begin{align*} 
T_{\eta} f (x) 
& \lesssim
x^{\eta - 1 - \tau/p} 
\bigg( \int_{x}^\infty \abs{f(z)}^p z^{\tau} \, dz \bigg)^{1/p}
\sup_{t>2x} (t/x)^{\eta - 1 - \tau/p} 
\Big( \log \big( 1 + t/x\big) \Big)^{1/p'} \\
& \lesssim
x^{\eta - 1 - \tau/p} 
\bigg( \int_{x}^\infty \abs{f(z)}^p z^{\tau} \, dz \bigg)^{1/p}.
\end{align*}
This gives
\begin{align*} 
\| T_{\eta} f \|_{L^p(x^\gamma dx)}^p
& \lesssim
\int_0^\infty x^{- 1 - \tau + \gamma } 
\int_x^\infty \abs{f(z)}^p z^{\tau} \, dz \, dx
= 
\int_0^\infty \abs{f(z)}^p z^{\tau} \int_0^z x^{- \tau - 1 + \gamma } \, dx \, dz \\
& \simeq 
\| f \|_{L^p(x^\gamma dx)}^p,
\end{align*}
which justifies \eqref{st:5}.

\noindent \textbf{Proof of \eqref{st:6}}.
Here we fix any $\tau \in [ p(\eta - 1),\gamma)$. Then, using \eqref{id:4} we obtain
\begin{align*} 
T_{\eta} f (x) 
\lesssim
\sup_{t>2x} t^{\eta - 1 - \tau/p} \bigg( \int_{x}^\infty \abs{f(z)}^p z^{\tau} \, dz \bigg)^{1/p} x^{-\eta + 1/p'}
\simeq 
x^{- 1/p - \tau/p} \bigg( \int_{x}^\infty \abs{f(z)}^p z^{\tau} \, dz \bigg)^{1/p}.
\end{align*}
This leads to
\begin{align*} 
\| T_{\eta} f \|_{L^p(x^\gamma dx)}^p
& \lesssim
\int_0^\infty x^{- \tau - 1 + \gamma } 
\int_x^\infty \abs{f(z)}^p z^{\tau} \, dz \, dx
= 
\int_0^\infty \abs{f(z)}^p z^{\tau} \int_0^z x^{- \tau - 1 + \gamma } \, dx \, dz \\
& \simeq 
\| f \|_{L^p(x^\gamma dx)}^p,
\end{align*}
which gives \eqref{st:6}.

\noindent \textbf{Proof of \eqref{st:7}}. Here we have $\eta > 0$. Define 
\begin{align*}
\widetilde{T}_{\eta} f (x) 
=
x^{\eta - 1} T_{\eta} \big( (\cdot)^{1-\eta} f  \big) (x)
=
x^{\eta - 1} \sup_{t>2x} \int_{t/2}^t \frac{\abs{f(z)} }{(t-z+x)^{\eta}} \, dz, 
\qquad x > 0.
\end{align*}
By Lemma \ref{lem:chvar}, our task is reduced to showing that
$\widetilde{T}_{\eta}$ is bounded on $L^p(dx)$ for all $1 < p < \infty$ satisfying $\eta > 1/p'$,
i.e.\ for $1< p < \infty$ if $\eta \ge 1$ or for $1 < p < 1/(1-\eta)$ if $\eta \in (0,1)$.
Then, by interpolation, it is enough to verify that
\begin{align} \label{id:5} 
& \text{$\widetilde{T}_{\eta}$ is of weak type $(1,1)$ if $\eta>0$,} \\ \label{id:6} 
& \text{$\widetilde{T}_{\eta}$ is of weak type $(p,p)$ for all $1 < p < 1/(1-\eta)$ if $\eta \in (0,1)$,} \\ \label{id:7}  
& \text{$\widetilde{T}_{\eta}$ is of weak type $(p,p)$ 
for all $1 < p < \infty$ if $\eta = 1$,} \\ \label{id:8} 
& \text{$\widetilde{T}_{\eta}$ is of strong type $(\infty,\infty)$ if $\eta > 1$.}  
\end{align}

Considering \eqref{id:5}, observe that
\begin{align*} 
\widetilde{T}_{\eta} f (x)
\lesssim
x^{-1} \int_0^\infty \abs{f(z)} \, dz,
\end{align*}
which easily implies the weak type $(1,1)$ of $\widetilde{T}_{\eta}$.

Next, we treat \eqref{id:6} and \eqref{id:7} together. Notice that
by H\"older's inequality and \eqref{id:1}
\begin{align*} 
\widetilde{T}_{\eta} f (x) 
& \lesssim
x^{\eta - 1}
\sup_{t>2x} \bigg( \int_{t/2}^t \abs{f(z)}^p \, dz \bigg)^{1/p}
 \bigg( \int_{t/2}^t (t-z+x)^{-\eta p'} \, dz \bigg)^{1/p'} \\ 
& \lesssim
x^{- 1/p} \| f \|_{L^p( dx)}.
\end{align*}
This readily implies the properties asserted in \eqref{id:6} and \eqref{id:7}.

Finally, we prove \eqref{id:8}. With the aid of \eqref{id:1} we obtain  
\begin{align*} 
\widetilde{T}_{\eta} f (x)
\lesssim
\| f \|_{L^\infty}
x^{\eta - 1} \sup_{t>2x} \int_{t/2}^t (t-z+x)^{-\eta} \, dz 
\simeq
\| f \|_{L^\infty}.
\end{align*}
Thus \eqref{id:8} follows and the proof of \eqref{st:7} is finished.

\noindent \textbf{Proof of \eqref{st:8}}.
Here $\eta > 0$ and $\gamma = \eta-1$. Observe that by the very definition of $T_{\eta}$
\begin{align*} 
T_{\eta} f (x) 
\le
x^{-\eta} \int_{x}^{\infty} z^{\eta - 1} \abs{f(z)} \, dz 
\le
x^{-\gamma - 1}
\| f \|_{L^1(x^\gamma dx)},
\end{align*}
which directly leads to \eqref{st:8}.

The proof of Lemma \ref{lem:Tchar} is complete.
\end{proof}

\subsection{Proof of the mapping properties of $S_{\a,\b}$} \label{ssec:Sab}

\begin{proof}[{Proof of Lemma \ref{lem:5}}]
Define, see \eqref{chvar}, 
\begin{align} \label{id:11}
\widetilde{S}_{\alpha, \beta} f (x)
& = 
x^{-\beta} S_{\alpha, \beta} \big(  (\cdot)^{\beta}  f \big) (x) \\ \nonumber
& \simeq
x^{-\beta}
\sup_{t > 2x} \int_{t/2}^{t} (t + x - z)^{-\alpha - 1/2}
(t - z)^{\alpha +\beta - 1/2} \abs{f(z)} \, dz, \qquad x > 0.
\end{align}
In view of Lemma \ref{lem:chvar},
our task reduces to showing that for $1 < p < \infty$ the operator $\widetilde{S}_{\alpha, \beta}$
is bounded on $L^p(dx)$ if and only if $p > (\alpha + \beta + 1/2)^{-1}$.

We first deal with the positive part of the above statement. By interpolation, it suffices to show that
$\widetilde{S}_{\alpha, \beta}$ is of weak type $(p,p)$ with respect to $(\mathbb{R}_{+},dx)$ for all
$p > (\alpha + \beta + 1/2)^{-1}$.
Using H\"older's inequality and Lemma~\ref{lem:F2ref2}
(notice that for $p > (\alpha + \beta + 1/2)^{-1}$ one has $(\alpha + \beta -1/2)p' > -1$), we get
\begin{align} 
& \widetilde{S}_{\alpha, \beta} f (x) \label{bbb} \\
& \quad \lesssim
x^{-\beta}
\sup_{t > 2x} \bigg( \int_{t/2}^{t} \abs{f(z)}^p \, dz \bigg)^{1/p} 
\bigg( \int_{t/2}^{t} (t + x - z)^{(-\alpha - 1/2)p'}
(t - z)^{(\alpha +\beta - 1/2)p'} \, dz \bigg)^{1/p'} \nonumber \\
& \quad \lesssim
x^{-1/p}
\|f\|_{L^p(dx)}. \nonumber
\end{align}
This implies the weak type $(p,p)$ estimate and shows the positive part of the lemma.

Next, we show the negative part of Lemma~\ref{lem:5}, which is equivalent to proving that the maximal operator
$\widetilde{S}_{\alpha, \beta}$ is not bounded on $L^p(dx)$ if $1 < p \le (\alpha + \beta + 1/2)^{-1}$.
To do so, let $a,b > 0$ be such that $0< b/2 < a < b$ and put
\begin{align*} 
f(x) = \ind{(a,b)} (x), \qquad x > 0.
\end{align*}

We first deal with the main case $\b < 0$.
Observe that using Lemma~\ref{lem:F2ref2} we have, uniformly in $x \le a/4$ and $0< b/2 < a < b$,
\begin{align*} 
\widetilde{S}_{\alpha, \beta} f (x) 
& \simeq
\sup_{2b > t > a} 
x^{-\beta}
\int_{(t/2) \vee a}^{t\wedge b} (t - z + x)^{-\alpha - 1/2}
(t - z)^{\alpha +\beta - 1/2} \, dz \\
& \simeq
x^{-\beta} \sup_{2b > t > a}
\int^{t - (t/2) \vee a}_{t - t\wedge b} ( z + x)^{-\alpha - 1/2}
z^{\alpha +\beta - 1/2} \, dz \\
& \simeq
x^{-\beta} \sup_{2b > t > a}
[t\wedge b - (t/2)\vee a] [t - (t/2)\vee a]^{\alpha +\beta - 1/2}
[t - t\wedge b + x]^{\beta} \\
& \qquad \qquad \qquad \times 
[t - (t/2)\vee a + x]^{-(\alpha +\beta + 1/2)}.
\end{align*}
Taking $t=b$ above, we arrive at the estimate
\begin{align*} 
\widetilde{S}_{\alpha, \beta} f (x) 
& \gtrsim
(b-a)^{\alpha +\beta + 1/2} 
(b-a + x)^{-(\alpha +\beta + 1/2)}, \qquad 0 < x \le a/4, \quad 0< b/2 < a < b.
\end{align*}
Therefore, assuming $\widetilde{S}_{\alpha, \beta}$ is bounded on $L^p(dx)$, we have 
\begin{align} \label{id:9}
\int_0^{a/4} (b-a)^{p(\alpha +\beta + 1/2)} 
(b-a + x)^{-p(\alpha +\beta + 1/2)} \, dx
\lesssim
b-a, \qquad 0< b/2 < a < b.
\end{align}

We now show that \eqref{id:9} implies that $\gamma := p(\alpha +\beta + 1/2) > 1$,
which gives the conclusion we need.
Observe that using \eqref{id:1} we get
\begin{align*} 
\texttt{LHS}\eqref{id:9} 
\simeq 
(b-a)^{\gamma} \int_{b-a}^{b-a + a/4} x^{-\gamma} \, dx
\simeq
(b-a)^{\gamma}
\begin{cases}
b^{-\gamma + 1} & \text{ if } \gamma < 1,\\
\log \big( 1 + a/(b-a) \big) & \text{ if } \gamma = 1.
\end{cases}
\end{align*}
Combining this with \eqref{id:9}, taking $b=1$ and letting $a \to 1^{-}$, we see that $\gamma$ cannot be less or equal to $1$.

The case $\b = 0$ is similar, the only difference is that we get a logarithmic lower bound from Lemma \ref{lem:F2ref2}
as follows:
\begin{align*} 
& \widetilde{S}_{\alpha, \beta} f (x) \\
& \quad \simeq
\sup_{2b > t > a}
\int^{t - (t/2) \vee a}_{t - t\wedge b} ( z + x)^{-\alpha - 1/2}
z^{\alpha - 1/2} \, dz \\
& \quad \gtrsim
(b-a)^{\alpha - 1/2} 
(b-a + x)^{-\alpha + 1/2} \log \Big( 1 + \frac{b-a}{x} \Big), 
\qquad 0 < x \le \frac{a}4, \quad 0< \frac{b}2 < a < b.
\end{align*}
Then, it suffices to analyze the integral related to the interval $((b-a)/100, a/4)$, which is straightforward.
\end{proof}

\begin{proof}[{Proof of Lemma \ref{lem:5a}}]
We invoke the operator $\widetilde{S}_{\alpha, \beta}$ from \eqref{id:11} and use Lemma~\ref{lem:chvar}
to reduce our task to showing that  
$\widetilde{S}_{\alpha, \beta}$ is bounded on $L^p(dx)$ if $(\alpha + \beta + 1/2)^{-1} < p < \beta^{-1}$.
By interpolation, it suffices to show that $\widetilde{S}_{\alpha, \beta}$ is of weak type $(p,p)$ with respect to
$(\mathbb{R}_+,dx)$ for all $p$ just indicated.

Applying H\"older's inequality and Lemma~\ref{lem:F2ref2} (notice that for $(\alpha + \beta + 1/2)^{-1} < p < \beta^{-1}$ one has
$(\alpha + \beta -1/2)p' > -1$ and $(\beta-1)p' < -1$) we get
the same estimates as in \eqref{bbb}, which
implies the weak type $(p,p)$ of $\widetilde{S}_{\alpha, \beta}$ and finishes the proof.
\end{proof}

\subsection{Proof of the mapping properties of $S_{\a,\b}^{\log}$} \label{ssec:Slog}

In this subsection we consider $\a$ and $\b$ satisfying $\a+\b=1/2$.
To prove Lemma \ref{lem:5b} we need the following technical result.
\begin{lem} \label{lem:D1}
Let $\xi < -1$ and $\lambda > -1$ be fixed. Then
\begin{align} \label{D1.3}
\int_a^b (A+z)^{\xi} \bigg( \log\Big(2 + \frac{A}{z} \Big) \bigg)^{\lambda} \, dz
\simeq
(b-a) (a + A)^{\xi + 1} (b + A)^{- 1} 
\bigg( \log\Big(2 + \frac{A}{b} \Big) \bigg)^{\lambda}, 
\end{align}
uniformly in $0 \le a \le b$ and $A>0$.
\end{lem}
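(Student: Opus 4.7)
The proof follows the same three-case dissection used in the proof of Lemma~\ref{lem:F2ref2}: (i) $b\le 2A$, (ii) $a\ge A$, and (iii) $a\le A$ and $b\ge 2A$. In each case the logarithm $\log(2+A/z)$ either stays essentially constant on $[a,b]$ or becomes so after a change of variable that concentrates the mass at the left endpoint.

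Cases (ii) and (iii) are quick. In Case~(ii) the constraint $a\ge A$ forces $A/z\le 1$ on $[a,b]$, so $\log(2+A/z)\in[\log 2,\log 3]$ and hence $\log(2+A/z)\simeq 1\simeq\log(2+A/b)$; the left-hand side of \eqref{D1.3} collapses to $\int_a^b z^\xi\,dz\simeq (b-a) b^{-1} a^{\xi+1}$ by \eqref{id:1}, matching the right-hand side via $(a+A)^{\xi+1}\simeq a^{\xi+1}$ and $(b+A)^{-1}\simeq b^{-1}$. In Case~(iii) I would split $\int_a^b=\int_a^A+\int_A^b$, apply the Case~(i) bound to the first piece (with $A$ in the role of $b$) and the Case~(ii) bound to the second; since $A-a\simeq A$, $b-A\simeq b$ and $\log(2+A/b)\simeq 1$ in this regime, both pieces and the right-hand side are comparable to $A^{\xi+1}$.

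The real work is Case~(i), where $A+z\simeq A$ on $[a,b]$ and the statement reduces to $\int_a^b(\log(2+A/z))^\lambda\,dz\simeq (b-a)(\log(2+A/b))^\lambda$. Substituting $w=A/z$ further reduces this to the slow-variation estimate
\[
\int_U^V(\log(2+w))^\lambda w^{-2}\,dw\;\simeq\;(\log(2+U))^\lambda(U^{-1}-V^{-1}),\qquad \tfrac{1}{2}\le U\le V\le\infty,
\]
with constants depending only on $\lambda>-1$. Heuristically, $w^{-2}$ decays fast enough to concentrate the mass near $w=U$, freezing the slowly varying factor $(\log(2+w))^\lambda$ at $(\log(2+U))^\lambda$. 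The main obstacle is establishing this uniformly for both signs of $\lambda\in(-1,\infty)$, since for $\lambda<0$ the naive monotonicity bound $(\log(2+w))^\lambda\le(\log(2+U))^\lambda$ fails. I would handle this by splitting at $V\simeq 2U$: for $V\le 2U$ one has $(\log(2+w))^\lambda\simeq(\log(2+U))^\lambda$ directly on $[U,V]$ and the identity $\int_U^V w^{-2}\,dw=U^{-1}-V^{-1}$ finishes the job, while for $V\ge 2U$ an integration by parts against $d(-1/w)$ produces the main term $(\log(2+U))^\lambda/U$ plus a lower-order correction of the form $\lambda\int_U^V(\log(2+w))^{\lambda-1}w^{-1}(2+w)^{-1}\,dw$, which is itself controlled by the same method.
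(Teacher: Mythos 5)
Your three-case dissection and the treatment of Cases~(ii) and~(iii) match the paper's proof (the paper splits Case~(iii) at $3A/2$ rather than at $A$, which is immaterial). The difference is in Case~(i): after the substitution $w=A/z$ your target estimate is, up to normalization, the paper's auxiliary bound \eqref{D1.2} with $\zeta=-2$; the paper gets \eqref{D1.2} by a logarithmic change of variables in an incomplete-Gamma estimate imported from \cite[Lemma 2.4]{NSS}, whereas you propose to prove it directly by integration by parts.

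The IBP sketch for Case~(i), as written, has some gaps. First, the sign of $\lambda$ is reversed: since $\log(2+w)$ increases, $(\log(2+w))^\lambda$ is \emph{decreasing} on $[U,V]$ when $\lambda<0$, so the monotonicity bound $(\log(2+w))^\lambda\le(\log(2+U))^\lambda$ does hold for $w\ge U$ and gives the upper bound at once; it is $\lambda>0$ that requires the extra argument. Second, the IBP identity also carries a boundary term $-(\log(2+V))^\lambda/V$ that you drop; its sign is favorable for the upper bound, but the decomposition into ``main term plus lower-order correction'' is misleading---for $\lambda>0$ the correction $\lambda\int_U^V(\log(2+w))^{\lambda-1}(w(2+w))^{-1}\,dw$ is not a priori small compared to $(\log(2+U))^\lambda/U$, and controlling it ``by the same method'' requires a downward induction on $\lambda$ stepping by $1$, with the base case $\lambda\le 0$ handled by monotonicity (a dyadic summation over $[2^kU,2^{k+1}U]$ is an alternative that avoids the recursion). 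Third, IBP only addresses the upper bound; the lower bound needs the separate, easy, observation that $\int_U^V\ge\int_U^{2U}\gtrsim(\log(2+U))^\lambda/U$ when $V\ge 2U$. With these repairs the approach is viable, but the paper's route through \cite{NSS} sidesteps all of this bookkeeping.
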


In the proof of Lemma~\ref{lem:D1} we shall use a relation, which can easily be deduced from e.g.\ \cite[Lemma 2.4]{NSS}.
Namely, for a fixed $\lambda > -1$ we have
\begin{align*} 
\int_a^b s^{\lambda} e^{-s} \, ds 
\simeq
[(b-a) \wedge 1] (a+1)^{\lambda} (b \wedge 1)^{\lambda} e^{-a}, 
\qquad 0 \le a \le b < \infty.
\end{align*}
Changing the variable $s \mapsto -(\zeta+1)\log y$ above,
for any $\zeta < -1$ and $\lambda > -1$ fixed we get
\begin{align} \label{D1.2}
\int_X^Y \big( \log y \big)^{\lambda} y^{\zeta} \, dy
\simeq
X^{\zeta + 1} \bigg[ 1 \wedge \log \frac{Y}{X} \bigg] [1 + \log X]^{\lambda}
[ 1 \wedge \log Y]^{\lambda}, 
\qquad Y \ge X \ge 1.
\end{align}

\begin{proof}[Proof of Lemma~\ref{lem:D1}]
We split the reasoning into three cases.

\noindent \textbf{Case 1:} $b \le 2A$. Changing the variable 
$10A/z \mapsto y$ and using \eqref{D1.2}, we obtain
\begin{align*} 
\texttt{LHS}\eqref{D1.3} 
& \simeq
A^{\xi} \int_a^b \bigg( \log\Big(\frac{10 A}{z} \Big) \bigg)^{\lambda} \, dz
\simeq 
A^{\xi + 1} \int^{10A/a}_{10A/b} \big( \log y \big)^{\lambda} y^{-2} \, dy \\
& \simeq 
A^{\xi} b \bigg[ 1 \wedge \log \frac{b}{a} \bigg] 
\big( \log (1 + A/b) \big)^{\lambda}.
\end{align*}
Since $1 \wedge \log \frac{b}{a} \simeq (b-a)/b$, $0 \le a \le b$, we get the desired estimate.

\noindent \textbf{Case 2:} $a \ge A$. Now $A/z \le A/a \le 1$ for $a \le z \le b$, so using \eqref{id:1} we have 
\begin{align*} 
\texttt{LHS}\eqref{D1.3} 
& \simeq
\int_a^b z^{\xi} \, dz
\simeq
(b-a) b^{-1} a^{\xi + 1} 
\simeq
\texttt{RHS}\eqref{D1.3}. 
\end{align*}

\noindent \textbf{Case 3:} $a \le A$ and $b \ge 2A$. Here we split the integral in question into two parts and use
the already justified estimates from Case 1 and Case 2 to get
\begin{align*} 
\texttt{LHS}\eqref{D1.3}
\simeq
\bigg( \int_a^{3A/2} + \int_{3A/2}^b \bigg) \ldots
\simeq
A^{\xi + 1} 
& \simeq 
\texttt{RHS}\eqref{D1.3}. 
\end{align*}
This finishes the proof of Lemma~\ref{lem:D1}.
\end{proof}

\begin{proof}[Proof of Lemma~\ref{lem:5b}]
Define, see \eqref{chvar}, 
\begin{align} \label{id:13}
\widetilde{S}_{\alpha, \beta}^{\,\log} f (x)
& = 
x^{-\beta} S_{\alpha, \beta}^{\log} \big( (\cdot)^{\beta} f  \big) (x) \\ \nonumber
& \simeq
x^{-\beta}
\sup_{t > 2x} \int_{t/2}^{t} (t + x - z)^{-\alpha - 1/2}
\log \Big(2 + \frac{x}{t - z} \Big) \abs{f(z)} \, dz, \qquad x > 0.
\end{align}
Using Lemma \ref{lem:chvar} reduces the task to showing that for $1 < p < \infty$ the operator
$\widetilde{S}_{\alpha, \beta}^{\, \log}$ is bounded on $L^p(dx)$ if $\beta p < 1$.

Observe that for $\beta \ge 1$ there is nothing to prove, therefore we may assume that $\beta < 1$.
By interpolation, it suffices to show that $\widetilde{S}_{\alpha, \beta}^{\, \log}$ is of weak type $(p,p)$ with respect to
$(\mathbb{R}_+,dx)$ for all $p > 1$ satisfying $\beta p < 1$.
Using H\"older's inequality 
and Lemma~\ref{lem:D1} (notice that for $\beta p < 1$ we have 
$(-\alpha - 1/2)p' < -1$) we get
\begin{align*} 
& \widetilde{S}_{\alpha, \beta}^{\, \log} f (x) \\
& \quad \lesssim
x^{-\beta}
\sup_{t > 2x} \bigg( \int_{t/2}^{t} \abs{f(z)}^p \, dz \bigg)^{1/p} 
\bigg( \int_{t/2}^{t} (t + x - z)^{(-\alpha - 1/2)p'}
\bigg( \log \Big(2 + \frac{x}{t - z} \Big) \bigg)^{p'} \, dz \bigg)^{1/p'} \\
& \quad \lesssim
x^{-\beta}
\bigg( \int_{0}^{\infty} \abs{f(z)}^p \, dz \bigg)^{1/p} 
\sup_{t > 2x}
\bigg( \int_0^{t/2} ( x + z)^{(-\alpha - 1/2)p'}
\bigg( \log \Big(2 + \frac{x}{z} \Big) \bigg)^{p'} \, dz \bigg)^{1/p'} \\
& \quad \lesssim
x^{-1/p} \|f\|_{L^p(dx)}.
\end{align*}
This readily implies the weak type $(p,p)$ of $\widetilde{S}_{\alpha, \beta}^{\, \log}$ in the asserted range of $p$.
\end{proof}

\subsection{Proof of the mapping properties of $R_{\a,\b}^{\log}$} \label{ssec:Rlog}

In this subsection $\a + \b =1/2$.

\begin{proof}[{Proof of Lemma \ref{lem:Rlog}}]
Because of Lemma \ref{lem:chvar}, it is enough we prove that
$$
\widetilde{R}_{\a,\b}^{\, \log}f(x) = \sup_{t > 3x} \frac{1}{x} \int_{t-x}^{t+x} \log\bigg( \frac{4x}{z-(t-x)}\bigg) |f(z)|\, dz,
\qquad x >0,
$$
is bounded on $L^p(dx)$. Using H\"older's inequality we get
$$
\widetilde{R}_{\a,\b}^{\, \log}f(x) \le \sup_{t > 3x}\frac{1}x \bigg( \int_x^{\infty} |f(z)|^p\, dz\bigg)^{1/p}
	\bigg( \int_{t-x}^{t+x} \bigg[\log\bigg(\frac{4x}{z-(t-x)}\bigg)\bigg]^{p'}\, dz \bigg)^{1/p'}.
$$
The second integral here is comparable to $x$, which follows instantly by a simple change of the variable of integration.
Thus
$$
\widetilde{R}_{\a,\b}^{\, \log}f(x) \lesssim x^{-1/p} \|f\|_{L^p(dx)}.
$$
Consequently, $\widetilde{R}_{\a,\b}^{\, \log}$ is of weak type $(p,p)$ for any $1 < p < \infty$.
Now the $L^p$-boundedness follows by interpolation.
\end{proof}

\section*{Appendix}

The purpose of this section is to explain briefly what happens in the limiting case $\a+\b = -1/2$,
in particular to give more credit to the results by Colzani et al.\ \cite{CoCoSt} which perhaps was
not done properly enough in our previous papers \cite{CNR,CNR2}.
To this end we always assume that $\a > -1$.

Recall (cf.\ \cite[Section 1]{CNR}) that the integral operator $M_t^{\a,\b}$, $\a+\b > -1/2$,
originates from the Hankel transform multiplier operator
\begin{equation*} 
\mathcal{M}_t^{\a,\b}f = \mathcal{H}_{\a}\big( m_{\a,\b}(t\, \cdot) \mathcal{H}_{\a}f\big), \qquad t > 0,
\end{equation*}
where
$$
m_{\a,\b}(s) = 2^{\a+\b} \Gamma(\a+\b+1) \frac{J_{\a+\b}(s)}{s^{\a+\b}}, \qquad s > 0.
$$
Clearly, the above makes sense also in the case $\a+\b=-1/2$, since then the multiplier
$m_{\a,\b}(t\cdot)$ is a bounded function. However, a standard integral representation of $\mathcal{M}_t^{\a,\b}$
exists only when $\a+\b > -1/2$, and this is, roughly speaking, thanks to a decay of $m_{\a,b}(t\,\cdot)$ at infinity.
Nevertheless, as observed in \cite{CoCoSt}, $\mathcal{M}_t^{\a,\b}$ in the limiting case $\a+\b=-1/2$ can in general
be represented as a principal value integral operator plus some extra terms.

To proceed, we first define the kernel $K_t^{\a,\b}(x,z)$ when $\a+\b=-1/2$ in a way that shows consistency
with the case $\a+\b > -1/2$. Observe that the triple Bessel function integral defining $K_t^{\a,\b}(x,z)$
for $\a+\b > -1/2$ diverges at infinity (even in the Riemann sense) when $\a+\b=-1/2$.
Therefore we proceed in another way, first compute the integral and then allow $\a+\b=-1/2$ in the resulting expression.
Recall from \cite[Section 3.1]{CNR} that
\begin{align*}
K_t^{\a,\b}(x,z) & = \frac{2^{\a+\b}\Gamma(\a+\b+1)}{\sqrt{2\pi}} \frac{(xz)^{\b-1}}{t^{2\a+2\b}} \\
										& \quad \times
										\begin{cases}
												0, & t < |x-z|, \\												
												(\sin v)^{\a+\b-1/2}\; \mathsf{P}_{\a-1/2}^{1/2-\a-\b}(\cos v), & |x-z| < t < x+z,\\
												\frac{2}{\Gamma(\b)}
												(\sinh u)^{\a+\b-1/2}\; \mathbf{Q}_{\a-1/2}^{1/2-\a-\b}(\cosh u), & x+z < t,
										\end{cases}
\end{align*}
where
$$
v = \arccos\frac{x^2+z^2-t^2}{2xz}, \qquad u = \arcosh\frac{t^2-x^2-z^2}{2xz},
$$
$\mathsf{P}$ is the Ferrers function of the first kind (the associated Legendre function of the first kind on the cut),
and $\mathbf{Q}$ is Olver's function (renormalized associated Legendre function of the second kind); for more information
on $\mathsf{P}$ and $\mathbf{Q}$ we refer to \cite[Chapter 14]{handbook}.
Note that the above formula makes sense also when $\a+\b=-1/2$, and we take it as the definition of $K_t^{\a,\b}(x,z)$
in this case.

Assume now that $\a+\b=-1/2$, i.e.\ $\b=-1/2-\a$. One can verify that $K_t^{\a,-1/2-\a}(x,z)$ coincides,
up to a factor emerging from
the density of $\mu_{\a}$, with the kernel $\frac{\partial}{\partial t}\mathbb{K}(t,x,z)$ defined in \cite{CoCoSt} in terms
of the Gauss hypergeometric function. 
Thus, following \cite[Section 1]{CoCoSt}, we define $M_t^{\a,\b}$, $t>0$, in the limiting
case $\a+\b=-1/2$ by
\begin{align*}
M_t^{\a,-1/2-\a}f(x) & = \pv \int_0^{\infty} K_t^{\a,-1/2-\a}(x,z) f(z)\, d\mu_{\a}(z) \\
	& \quad + \frac{|x-t|^{\a+1/2}f(|x-t|) + (x+t)^{\a+1/2}f(x+t)}{2x^{\a+1/2}} \\
	& \quad	- \sin^2\Big(\frac{\pi}2 \big(\a+1/2\big)\Big)\; \chi_{\{x < t\}} \; \frac{(t-x)^{\a+1/2}f(t-x)}{x^{\a+1/2}}, \qquad x > 0,
\end{align*}
for all $f$ for which the formula makes sense.
Here the principal value pertains to a non-integrable singularity occurring at $z=t-x$ in cases when
$t > x$ and $\a+1/2$ is not integer.
The operator $M_t^{\a,-1/2-\a}$ coincides with $\mathcal{M}_t^{\a,-1/2-\a}$ in $L^2(d\mu_{\a})$.
This was essentially shown in \cite{CoCoSt} under a slight restriction $\a \ge -1/2$ which is not really necessary.

Note that the kernel $K_t^{\a,-1/2-\a}(x,z)$ vanishes completely when $\a=-1/2$.
When $\a+1/2 \in \mathbb{Z}$ this kernel vanishes outside the set $\{(t,x,z) \in \mathbb{R}_+^3 : |x-z| < t < x+z\}$ and,
moreover, has no non-integrable singularities and expresses via elementary functions (see \cite[Corollary 1.2]{CoCoSt}).
In case $\a+1/2 \notin \mathbb{Z}$ the kernel is a transcendental function with a non-integrable singularity at $z=t-x$
of order of magnitude $(t-x-z)^{-1}$.
Some estimates and asymptotics of $K_t^{\a,-1/2-\a}(x,z)$ can be found in \cite{CoCoSt}. Further properties of
the kernel can be concluded from the theory of associated Legendre functions, found e.g.\ in \cite[Chapter 14]{handbook},
in a similar manner as it was done in \cite[Section 3]{CNR}.

Concerning the maximal operator $M_t^{\a,-1/2-\a}$, no results in the spirit of the present paper are possible
(there are simply no strong, weak and restricted weak type $(p,p)$, $p< \infty$, bounds). Nonetheless, as a substitute
there are non-trivial results of this kind for suitable averaging operators related to $M_t^{\a,-1/2-\a}$,
see \cite{CoCoSt} and references therein; see also \cite{MS}.

Finally, we point out that via $M_t^{\a,-1/2-\a}$ (possibly together with $M_t^{\a,1/2-\a}$) one can express
general solutions to several classical Cauchy initial-value problems with radial initial data.
This in particular pertains to the wave, and more generally the Euler-Poisson-Darboux, equation in $\mathbb{R}^n$,
see e.g.\ \cite{CoCoSt} and \cite[Section 7]{CNR}$^{\ddag}$, and also references given there.
{\footnote{$\ddag$ There is a misprint in \cite[Section 7, p.\,4438, l.\,10]{CNR}, there should be no factor $t$ in front of
	$\mathcal{M}_t^{\a,-\a-1/2}$.}


\end{document}